\newtheorem{theorem}{Theorem}[section]
\newtheorem{corollary}[theorem]{Corollary}
\newtheorem{lemma}[theorem]{Lemma}
\newtheorem{proposition}[theorem]{Proposition}
\theoremstyle{definition}
\newtheorem{definition}[theorem]{Definition}
\newtheorem{remark}[theorem]{Remark}
\newcommand{\R}{\mathbb{R}}
\newcommand{\N}{\mathbb{N}}
\newcommand{\C}{\mathbb{C}}
\newcommand{\Z}{\mathbb{Z}}
\newcommand{\K}{\mathbb{K}}
\def\Xint#1{\mathchoice
{\XXint\displaystyle\textstyle{#1}}%
{\XXint\textstyle\scriptstyle{#1}}%
{\XXint\scriptstyle\scriptscriptstyle{#1}}%
{\XXint\scriptscriptstyle\scriptscriptstyle{#1}}%
\!\int}
\def\XXint#1#2#3{{\setbox0=\hbox{$#1{#2#3}{\int}$ }
\vcenter{\hbox{$#2#3$ }}\kern-.6\wd0}}
\def\dashint{\Xint-}
\begin{document}

\title[Decoupling for complex curves]{Decoupling for complex curves and improved decoupling for the cubic moment curve}
\author{Robert Schippa}
\email{rschippa@berkeley.edu}

\address{UC Berkeley, Department of Mathematics, 847 Evans Hall
Berkeley, CA 94720-3840}
\keywords{complex curve, decoupling, Vinogradov's mean-value theorem}
\subjclass[2020]{Primary: 35B45, 35Q55, Secondary: 42B37.}

\begin{abstract} We prove sharp $\ell^2$-decoupling inequalities for non-degenerate complex curves via the bilinear argument due to Guo--Li--Yung--Zorin-Kranich, which in turn is inspired by the efficient congruencing argument of Wooley.
Secondly, quantifying the iteration in the cubic case, we obtain a logarithmic refinement of the decoupling inequality for the cubic moment curve.
\end{abstract}

\maketitle

\section{Introduction}

%
%
%

\subsection{Decoupling inequalities for non-degenerate complex curves}

In the following we prove decoupling inequalities for non-degenerate complex curves. An example is the complex moment curve:
\begin{equation*}
\C \ni z \mapsto \gamma(z) = (z,\frac{z^2}{2},\frac{z^3}{3!},\ldots,\frac{z^k}{k!}) \in \C^k \equiv \R^{2k}.
\end{equation*}
By non-degeneracy we mean that the ``complex torsion"
\begin{equation*}
|\partial_z \gamma \wedge_\C \partial^2_z \gamma \ldots \wedge_\C \partial_z^k \gamma | \gtrsim 1
\end{equation*}
is uniformly bounded from below. 
$\gamma(z) \in \C^k$ is translated to an element of $\R^{2k}$ by separating real and imaginary part of the complex components. We define $F: \C^k \to \R^{2k}$ via
\begin{equation}
\label{eq:CorrespondenceMappingRC}
F(z_1,\ldots,z_k) = (\Re z_1, \Im z_1, \Re z_2, \Im z_2, \ldots, \Re z_k , \Im z_k).
\end{equation}

 Write $z = s + it$, $s,t \in \R$ with $i^2 = -1$, and we find
\begin{equation*}
F(z,\frac{z^2}{2},\frac{z^3}{3!},\ldots) = (s,t,\frac{s^2-t^2}{2},st,\frac{s^3 - 3st^2}{3!}, \frac{3s^2 t- t^3}{3!}, \ldots ).
\end{equation*}
We shall formulate the results entirely in Euclidean space, but to take advantage of the complex structure carry out several computations using the complex notations.

\medskip

Recall the canonical (anisotropic) neighbourhood for the real moment curve $\gamma_{\R}(s) = (s,\frac{s^2}{2},\ldots,\frac{s^k}{k!}) \in \R^k$. For an interval $I_\delta \subseteq [0,1]$ of length $2 \delta$ with center $c_{I_\delta}$ we associate the anisotropic neighbourhood:
\begin{equation}
\label{eq:RealAnisotropicNeighbourhood}
\theta_{I_\delta, \R} = \{ \gamma_{\R}(c_{I_\delta}) + a_1 \partial_s \gamma_{\R}(c_{I_\delta}) + \ldots + a_n \partial_s^n \gamma_{\R}(c_{I_\delta}) \; : |a_i| \leq 4 \delta^{i} \text{ for } i=1,\ldots,n \}.
\end{equation}
A function $f \in \mathcal{S}(\R^k)$ with $\text{supp} (\hat{f}) \subseteq \theta_{I_\delta,\R}$ is essentially constant on dual anisotropic rectangles of size $\delta^{-1} \times \ldots \times \delta^{-k}$. 
Let $g:[0,1] \to \C$ be supported in an interval of length $\delta$.
Fourier extension applied to $g$
\begin{equation*}
\mathcal{E}_{\gamma_{\R}} g(x) = \int_{[0,1]} e^{i x \cdot \gamma_{\R}(s)} g(s) ds
\end{equation*}
has on balls of size $\delta^{-n}$ the properties of $f$. Let $\mathcal{I}_{\delta}$ denote an essentially disjoint covering of $[0,1]$ with $\delta$ intervals. For $I \in \mathcal{I}_{\delta}$ with center $c_{I}$ we assign the anisotropic neighbourhood $\theta_{I,\R}$ from above. By $f_{\theta}$ we denote the Fourier projection of $f$ to $\theta$.

\smallskip

In \cite{BourgainDemeterGuth2016} Bourgain--Demeter--Guth proved the following decoupling inequality for curves with torsion\footnote{Strictly speaking, the decoupling inequality was only proved for the real moment curve, but the extension is immediate by reiteration, i.e., the Pramanik--Seeger \cite{PramanikSeeger2007} argument.}:
\begin{theorem}[$\ell^2$-decoupling for curves with torsion]
\label{thm:RealDecouplingMomentCurve}
Let $k \geq 1$ and $\gamma_{\R}:[0,1] \to \R^k$ denote the real moment curve. 
Let $f \in \mathcal{S}(\R^k)$ with $\text{supp}(\hat{f}) \subseteq \bigcup_{I \in \mathcal{I}_{\delta}} \theta_{I,\R}$. Then the following estimate holds:
\begin{equation}
\label{eq:RealMomentCurveDecoupling}
\| f \|_{L^{p_k}(\R^k)} \lesssim_\varepsilon \delta^{-\varepsilon} \big( \sum_{I \in \mathcal{I}_{\delta}} \| f_{{\theta}_{I,\R}} \|_{L^{p_k}(\R^k)}^2 \big)^{\frac{1}{2}}.
\end{equation}
\end{theorem}

\medskip

In this note we prove the extension of the above theorem to non-degenerate complex curves. To formulate the result, we need to consider the anisotropic neighbourhoods of a complex non-degenerate curve. Let $Q_1 = \{ z \in \C : 0 \leq \Re z, \Im z \leq 1 \}$.

We make the following definition:
\begin{definition}
A complex curve $\gamma : \C \to \C^n$ is called non-degenerate (or complex curve with torsion) if it is analytic in $Q_1$ and if there is $c_{\gamma} > 0$ such that
\begin{equation*}
|\partial_z \gamma(z) \wedge \partial^2_z \gamma(z) \wedge \ldots \wedge \partial_z^n \gamma(z) | \geq c_\gamma > 0 \; \forall z \in Q_1.
\end{equation*}
\end{definition}
For future reference we define the constant:
\begin{equation*}
C_\gamma = \sup_{1 \leq m \leq 2(k+1)} \| \partial^m_z \gamma \|_{L^\infty(Q_1)}.
\end{equation*}
By a Pramanik--Seeger argument we can always normalize the complex curves, making them ``close" in a quantitative sense to the complex moment curve.


\medskip

The canonical (anisotropic) neighbourhood of the image of $((s,t) \mapsto F(\gamma(s+it)) \in \R^{2k})$ is determined by the almost constant property of $\mathcal{E}_\gamma f(\underline{x})$, $f \in L^1([0,1]^2)$, where
\begin{equation*}
\mathcal{E}_\gamma f(\underline{x}) = \int_{[0,1]^2} e^{i \underline{x} \cdot F(\gamma(s+it))} f(s,t) ds dt.
\end{equation*}

For a square $Q \subseteq \R^2$ we denote the side length by $\ell(Q)$.
 Let $Q_\delta \subseteq [0,1]^2$ be a square in $[0,1]^2$ of length $\delta$. We carry out a (complex) Taylor expansion of $\gamma(z)$, $z \in Q_\delta$, around the center of $Q_\delta$ given by $c_z = s + it$:
\begin{equation*}
\gamma(z) = \gamma(c_z) + \partial_z \gamma(c_z) \cdot (z-c_z) + \partial^2_{z} \gamma(c_z) \frac{(z-c_z)^2}{2} + \ldots.
\end{equation*}
Separate $(z-c_z)^k = z^{(k)}_{\Re} + i z^{(k)}_{\Im}$. 
For $1 \leq m \leq k$ we define the ``realized" tangent vectors by
\begin{equation}
\label{eq:RealizedTangentVectors}
F(\partial^m_z \gamma(c_z)) = \Gamma^m_1(c_z,\gamma), \quad F(i \partial^m_z \gamma(c_z)) = \Gamma^m_2(c_z,\gamma).
\end{equation}

Since $|z^{(m)}_{\Re}|$, $|z^{(m)}_{\Im}| \lesssim \delta^m$, we find that $|\mathcal{E}_\gamma f| $ is essentially constant on translates of dimensions $\sim \delta^{-1} \times \delta^{-1} \times \delta^{-2} \times \delta^{-2} \times \ldots$ in directions $\bigotimes_{j=1}^k \Gamma^j_1(c_z,\gamma) \times \Gamma^j_2(c_z,\gamma) $.

\smallskip

Let $\mathcal{Q}_\delta = (Q_{J})_{J \in \mathcal{J}_\delta}$ be an essentially disjoint partition of $[0,1]^2$ into squares with side length $\delta$ and with centers aligned on a grid. For $c_{Q_J}$ denoting the center of $Q_{J}$ let $\theta_{Q_{J},\gamma}$ denote the analog of the real anisotropic neighbourhood of $\theta_{I_\delta,\R}$ from \eqref{eq:RealAnisotropicNeighbourhood} dictated by $\Gamma^i_j(c_{Q_J},\gamma)$; we refer to Section \ref{section:Preliminaries} for the definition.
The collection of $\theta_{Q_{J},\gamma}$ with $Q_J \in \mathcal{Q}_\delta$ is denoted by $\Theta_{\delta,\gamma}$. 

\begin{theorem}[$\ell^2$-decoupling for non-degenerate complex curves]
\label{thm:DecouplingComplexCurves}
Let $k \in \N$, $\gamma:\C \to \C^k$ be a non-degenerate complex curve and $f \in \mathcal{S}(\R^{2k})$ with $\text{supp}(\hat{f}) \subseteq \bigcup_{Q \in \mathcal{Q}_\delta} \theta_{Q,\gamma}$. The following estimate holds:
\begin{equation}
\label{eq:ComplexDecouplingIntroduction}
\| f \|_{L^p(\R^{2k})} \lesssim_{\varepsilon,c_\gamma,C_\gamma} \delta^{-\varepsilon} \big( \sum_{Q \in \mathcal{Q}_\delta} \| f_{\theta_{Q}} \|_{L^p(\R^{2k})}^2 \big)^{\frac{1}{2}}
\end{equation}
with $p_k = k(k+1)$.
\end{theorem}

For $k=1$ \eqref{eq:RealMomentCurveDecoupling} and \eqref{eq:ComplexDecouplingIntroduction}  is immediate from Plancherel's theorem. For $k=2$, \eqref{eq:RealMomentCurveDecoupling} is a special case of the decoupling inequalities for elliptic surfaces proved by Bourgain--Demeter \cite{BourgainDemeter2015}. As a consequence of \eqref{eq:RealMomentCurveDecoupling}, we obtain Vinogradov's Mean-Value Theorem on the numbers of solutions to a system of diophantine inequalities. This was proved in the cubic case already by Wooley \cite{WooleyA,WooleyB} via efficient congruencing.

\medskip

Combining the arguments of Bourgain--Demeter--Guth and Wooley \cite{Wooley2019}, Guo--Li--Yung--Zorin-Kranich \cite{GuoLiYungZorinKranich2021} found a concise proof of the $\ell^2$-decoupling inequality for the moment curve at the endpoint $p_k = k(k+1)$ in all dimensions. This bilinear decoupling is based on induction on dimension and transversality and will be used presently in the complex case.

By standard means, the sharp decoupling inequality
gives bounds on solutions to the system of diophantine equations associated with the complex moment curve:
\begin{equation}
\label{eq:DiophantineSystem}
\left\{ \begin{array}{cl}
k_1 + \ldots + k_s &= k_{s+1} + \ldots + k_{2s}, \\
j_1 + \ldots + j_s &= j_{s+1} + \ldots + j_{2s}, \\
k_1^2 - j_1^2 + \ldots + k_s^2 - j_s^2 &= k_{s+1}^2 - j_{s+1}^2 + \ldots + k_{2s}^2 - j_{2s}^2, \\
k_1 j_1 + \ldots + k_s j_s &= k_{s+1} j_{s+1} + \ldots + k_{2s} j_{2s}, \\
\sum_{m=1}^s (F(s_m+i t_m))_{l} &= \sum_{m=s+1}^{2s} (F(s_m + i t_m))_l \text{ for } l=5,\ldots,2s.
\end{array} \right.
\end{equation}

It must be pointed out that Wooley \cite[Corollary~3.2]{Wooley2019} obtained $p$-adic decoupling estimates via efficient congruencing, which by restriction of scalars \cite[Section~16]{Wooley2019} yields the sharp bounds of solutions (i.e., the concentration along the diagonal) of the above system \eqref{eq:DiophantineSystem}.

The decoupling inequality \eqref{eq:ComplexDecouplingIntroduction} recovers a consequence of \cite[Section~16]{Wooley2019}:
\begin{corollary}
Let $k \geq 1$ and $p_k = k(k+1)$, $2s = p_k$. Then the system of equations \eqref{eq:DiophantineSystem} with integers $1 \leq k_1,\ldots,k_{2s} \leq N$ and $1 \leq j_1,\ldots,j_{2s} \leq N$ has $\lesssim_\varepsilon N^{2s+\varepsilon}$ solutions.
\end{corollary}

\begin{remark}
This insight was not reflected in the previous preprint version \\ arXiv:2302.10884v2, where Theorem~1.4 was erroneously presented as novel result. Trevor Wooley kindly shared the above elaboration with the author.
\end{remark}

We note that the components of $F(\gamma(z))$ are in general not elliptic: We have e.g. the indefinite components
\begin{equation*}
F(\gamma(s+it))_3= s^2 - t^2, \quad F(\gamma(s+it))_4 = st.
\end{equation*}
Nonetheless, the $\ell^2 L^6$-decoupling holds true, and we have the essentially sharp bound for diophantine solutions to the system of equations:
\begin{equation*}
\left\{ \begin{array}{cl}
k_1 + k_2 + k_3 &= k_4 + k_5 + k_6, \\
j_1 + j_2 + j_3 &= j_4 + j_5 + j_6, \\
k_1^2 - j_1^2 + k_2^2 - j_2^2 + k_3^2 - j_3^2 &= k_4^2 - j_4^2 + k_5^2 - j_5^2 + k_6^2 - j_6^2, \\
k_1 j_1 + k_2 j_2 + k_3 j_3 &= k_4 j_4 + k_5 j_5 + k_6 j_6.
\end{array} \right.
\end{equation*}
Recall that sharply divergent from the present results, for the hyperbolic paraboloid $f(s,t) = s^2 - t^2$ the $\ell^2 L^4$-estimate for the elliptic paraboloid fails dramatically \cite{BourgainDemeter2017} since the surface contains a straight line. In the present situation the complex torsion reflects a transversality, which salvages the estimates from the elliptic case.

\begin{remark}
In \cite{Schippa2025} a sharp square function estimate ($L^4 \ell^2$) was obtained for the complex paraboloid and its conical extension. This square function estimate yields sharp smoothing estimates for averages over curves. We hope to extend the presently proved decoupling estimates
and consider applications to smoothing estimates in future work.
\end{remark}

\subsection{Logarithmic refinement in the cubic case}

We take the opportunity to quantify the bilinear decoupling iteration due to Guo--Li--Yung--Zorin-Kranich \cite{GuoLiYungZorinKranich2021} in the cubic case, which yields a logarithmic refinement of Vinogradov's Mean-Value Theorem. In the following we denote by $\mathcal{D}_{k,\R}(\delta)$ the decoupling constant for the real moment curve in $k$ (real) dimensions.
\begin{theorem}[Improved~decoupling~for~the~moment~curve~in~three~dimensions]
\label{thm:ImprovedDecoupling}
There is $0<\delta_0<1$ and $C > 0$ such that for $0<\delta<\delta_0$, we have the following bound for the decoupling constant of the real moment curve in three dimensions:
\begin{equation}
\label{eq:DecouplingConstant3d}
\mathcal{D}_{3,\R}(\delta) \leq \exp ( C \frac{\log \delta^{-1}}{ \log(\log(\delta^{-1}))}).
\end{equation}
\end{theorem}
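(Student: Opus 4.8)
The plan is to deduce Theorem~\ref{thm:ImprovedDecoupling} from a quantitative version of the Guo--Li--Yung--Zorin-Kranich bilinear iteration, followed by an optimization of a free parameter of the type Li carried out for the parabola. \emph{Step 1: a quantified bilinear iteration.} I would re-run the scheme of \cite{GuoLiYungZorinKranich2021} for $\Gamma_3$ in $\R^3$ at the critical exponent $p=12$, but bookkeeping every implied constant. The scheme has three moving parts. First, a Bourgain--Guth-type passage from the linear constant $\mathcal{D}_3(\delta)$ to a bilinear decoupling quantity for pairs of sub-arcs of $[0,1]$ that are $\nu$-separated, at a cost which is a fixed power of $\log(1/\delta)$ together with $\mathcal{D}_3$ evaluated at the coarser scale $\nu$; the contribution of the non-transversal (near-diagonal) caps has to be absorbed here. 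Second, a transversal bilinear estimate in which the separation of the two arcs is quantified by the Vandermonde-type Jacobian attached to $\Gamma_3$, which is $\gtrsim \nu^{O(1)}$; this lets one, after an anisotropic affine rescaling of each arc, invoke decoupling for the lower-dimensional moment curves $\Gamma_2$ and $\Gamma_1$, into which I would feed $\mathcal{D}_1(\delta)\le 10$ (Plancherel) and the Guth--Maldague--Wang bound $\mathcal{D}_2(\delta)\le (\log(1/\delta))^{c}$ of \cite{GuthMaldagueWang2020}. Third, parabolic rescaling for $\Gamma_3$ combined with H\"older interpolation between the bilinear estimate and the trivial bound, which returns $\mathcal{D}_3$ at a finer scale but with an exponent strictly less than $1$. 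Concatenating these three steps produces an iterable inequality for $\mathcal{D}_3(\delta)$ with constants depending explicitly on the separation parameter $\nu$ and on a number-of-steps parameter.

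\emph{Step 2: unwinding to a double-exponential bound.} Iterating the inequality of Step~1 $n$ times, the polynomial loss in $\delta$ improves to some $\delta^{-\varepsilon_n}$ with $\varepsilon_n$ of size $O(1/n)$, while each step replaces the current multiplicative constant $K$ by roughly $K^{1+c}$, so that after $n$ steps the constant and the power of $\log(1/\delta)$ are of size $A^{A^{n}}$. Choosing $n$ comparable to $1/\varepsilon$ and balancing $\nu$ against $n$, one is led to the three-dimensional analogue of \eqref{eq:DoubleExponential}, namely $\mathcal{D}_3(\delta)\le A^{A^{1/\varepsilon}}\delta^{-\varepsilon}$ for every $\varepsilon>0$ and $\delta$ small. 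It is exactly here that the absence of $\delta^{-\varepsilon}$-loss in the lower-dimensional inputs is essential: with only the Bourgain--Demeter bound $\mathcal{D}_2(\delta)\le C_\varepsilon \delta^{-\varepsilon}$ one could not remove the inherited loss in three dimensions, whereas the clean polylogarithmic bound of \cite{GuthMaldagueWang2020} makes the iteration close.

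\emph{Step 3: optimization in $\varepsilon$.} I would then optimize the free parameter as in \cite[Theorem~1.1]{Li2021}. Writing $L=\log(1/\delta)$ and choosing $\varepsilon$ of size $\Theta(1/\log\log(1/\delta))$ --- concretely $\varepsilon = 2\log A / \log\log(1/\delta)$ --- the factor $A^{A^{1/\varepsilon}} = \exp( A^{1/\varepsilon}\log A)$ is bounded by $\exp( L^{1/2}\log A)$, which is $\le \exp(C L/\log\log(1/\delta))$ for $\delta$ small, while $\delta^{-\varepsilon} = \exp(\varepsilon L)$ contributes exactly $\exp( 2\log A \cdot L/\log\log(1/\delta))$. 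Together this gives $\mathcal{D}_3(\delta)\le \exp( C \log(1/\delta)/\log\log(1/\delta))$ for $0<\delta<\delta_0$, which is \eqref{eq:DecouplingConstant3d}.

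\emph{Main obstacle.} The quantitative work concentrates in Step~1: one must check that the linear-to-bilinear reduction, the disposal of the non-transversal caps, and the insertion of the lower-dimensional decoupling inequalities at the various intermediate scales produced by the rescalings each cost only a polylogarithmic (in $1/\delta$) factor per iteration step, so that the unwinding in Step~2 genuinely closes with a double-exponential --- rather than a worse --- dependence on $1/\varepsilon$. Keeping the $\nu$-dependence of all constants explicit throughout, and making the final balancing of $\nu$ against the number of iteration steps, is where the difficulty lies; the rest is the by-now-standard bilinear-decoupling bookkeeping together with the elementary optimization of Step~3.
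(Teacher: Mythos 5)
Your proposal follows essentially the same route as the paper: quantify the Guo--Li--Yung--Zorin-Kranich bilinear iteration for $\Gamma_3$ at $p=12$ feeding in Plancherel for $\Gamma_1$ and the Guth--Maldague--Wang polylogarithmic bound for $\Gamma_2$, absorb the accumulated $\log(\delta^{-1})^{O(c)}$ losses into the positive power of $\delta$ gained per step, bootstrap to the double-exponential bound $\mathcal{D}_3(\delta)\le A^{A^{1/\varepsilon}}\delta^{-\varepsilon}$, and then optimize $\varepsilon\sim 1/\log\log(\delta^{-1})$ as in Li's argument for the parabola. The only cosmetic difference is in the bookkeeping of Step~2, where the paper runs a self-improving implication $P(C_\varepsilon,\varepsilon)\Rightarrow P(C_8 2^{n_0 3^{10a/\varepsilon}}C_\varepsilon^{1-3^{-a/\varepsilon}},\varepsilon)$ and passes to the limit, but this yields the same double-exponential constant you describe.
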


Bourgain--Demeter \cite{BourgainDemeter2015} proved that for any $\varepsilon >0$ there is $C_\varepsilon >0$ such that
\begin{equation*}
\mathcal{D}_{2,\R}(\delta) \leq C_\varepsilon \delta^{-\varepsilon}.
\end{equation*}

Li \cite[Theorem~1.1]{Li2021} (see also \cite{Li2020}) observed how the double exponential bound
\begin{equation}
\label{eq:DoubleExponential}
\mathcal{D}_{2,\R}(\delta) \leq A^{A^{\frac{1}{\varepsilon}}} \delta^{-\varepsilon}
\end{equation}
allows one to sharpen the decoupling constant to
\begin{equation*}
\mathcal{D}_{2,\R}(\delta) \leq \exp( C \frac{\log \delta^{-1}}{\log(\log \delta^{-1})}).
\end{equation*}

Li \cite{Li2021} proved \eqref{eq:DoubleExponential} via a bilinear approach motivated by efficient congruencing. More recently, Guth--Maldague--Wang \cite{GuthMaldagueWang2024} improved \eqref{eq:DoubleExponential} to
\begin{equation}
\label{eq:LogarithmicDecoupling}
\mathcal{D}_{2,\R}(\delta) \leq \log(\delta^{-1})^c
\end{equation}
for some (possibly large) constant $c$. It is conjectured that $c=\frac{1}{6}$ is sharp due to a Gauss sum example given by Bourgain \cite{Bourgain1993I}. The approach in \cite{GuthMaldagueWang2024} differs from \cite{Li2021} as it uses a high-low decomposition. It remains an open question to use the high-low approach in higher dimensions. 
\medskip

\emph{Outline of the paper.} In Section \ref{section:Preliminaries} we introduce more notations and reduce to normalized curves. In Section \ref{section:ProofComplexCurves} we show Theorem \ref{thm:DecouplingComplexCurves} by extending the argument from \cite{GuoLiYungZorinKranich2021}.
The key ingredient are lower-dimensional decoupling inequalities derived from transversality. Here we need to translate the complex notions of transversality to Euclidean geometry.
Finally, in Section \ref{section:LogarithmicImprovement} we show the logarithmic improvement of the decoupling inequality for the cubic moment curve.

\section{Preliminaries}
\label{section:Preliminaries}

\subsection{Real and complex notations}
\label{subsection:RealComplexNotations}
Let $k \in \N$, and $F: \C^k \to \R^{2k}$ denote the $\R$-linear mapping defined in \eqref{eq:CorrespondenceMappingRC}.
By $\wedge_\C$ we denote the complex ($\C$-linear) wedge product in the exterior algebra
\begin{equation*}
 \Lambda(\C^k) = \big\langle e_{i_1} \wedge_{\C} \ldots \wedge_{\C} e_{i_m} \, : \, 1 \leq m \leq k, \; 1 \leq i_1 < i_2 < \ldots < i_m \leq k \big\rangle_{\C}.
\end{equation*}
By $\wedge_\C: \Lambda(\C^k) \times \Lambda(\C^k) \to \Lambda(\C^k)$ we denote the wedge product in the exterior algebra over $\C^k$. By $\wedge_\R: \Lambda(\R^{2k}) \times \Lambda(\R^{2k}) \to \Lambda(\R^{2k})$ we denote the $\R$-linear wedge product. Recall the ``realized" higher-order tangent vectors defined by
\begin{equation*}
F(\partial_z^m \gamma(z)) = \Gamma_1^m(z,\gamma), \quad F(i \partial_z^m \gamma(z)) = \Gamma^m_2(z,\gamma) \text{ for } m=1,\ldots, k.
\end{equation*}

Let $(Q_{J})_{J} = \mathcal{Q}_\delta$ be an essentially disjoint partition of $[0,1]^2$ into squares with side length $\delta$ and with centers aligned on a grid. For a non-degenerate curve $\gamma$ and $c_{J}$ denoting the center of $Q_{J}$ we let
\begin{equation*}
\begin{split}
\theta_{Q_{J},\gamma} &= \{ \gamma(c_{J}) + a_{11} \Gamma^1_1(c_{J},\gamma)  + a_{12} \Gamma^1_2(c_{J},\gamma)  + a_{21} \Gamma^2_1(c_{J},\gamma) \\
 &\quad + a_{22} \Gamma^2_2(c_{J},\gamma) + \ldots : 
 \, |a_{ij}| \leq 4 \delta^i \text{ for } i=1,\ldots,k \text{ and } j =1,2 \}.
 \end{split}
\end{equation*}
The collection of $\theta_{Q_{J},\gamma}$ for $Q_J \in \mathcal{Q}_\delta$ is denoted by $\Theta(\delta,\gamma)$.
Let $\mathcal{N}_\delta(\gamma) = \bigcup_{J \in \mathcal{J}_\delta} \theta_{Q_{J,\gamma}}$ denote the anisotropic neighbourhood at scale $\delta$. We shall consider decoupling inequalities for $f \in \mathcal{S}(\R^{2k})$ with $\text{supp} (\hat{f}) \subseteq \mathcal{N}_\delta(\gamma)$. By $f_\theta$ we denote the Fourier projection of $f$ to $\theta \in \Theta(\delta,\gamma)$. For $\bar{Q} \subseteq [0,1]^2$ we let
\begin{equation*}
\Theta(\delta,\bar{Q}) = \{ \theta_{Q} : Q \in \mathcal{Q}_\delta, \; Q \subseteq \bar{Q} \}.
\end{equation*}


\subsection{Normalized non-degenerate curves}
\label{subsection:NormalizedCurves}
The induction will be carried out over the following family of curves:
\begin{definition}[Normalized~non-degenerate~curve]
Let $\gamma: \C \to \C^k$ be an analytic curve in $Q_1$. $\gamma$ is referred to as non-degenerate normalized curve if
\begin{equation*}
\frac{1}{4} \leq |\partial_z \gamma(z) \wedge_{\C} \ldots \wedge_{\C} \partial_z^k \gamma(z) | \leq 4,
\end{equation*}
and
\begin{equation*}
|\partial^j_z \gamma(z)| \leq 2 k \text{ for } 1 \leq j \leq k \text{ and } |\partial_z^{k+1} \gamma(z) | \leq \frac{2^{-10k}}{(2k)^k k!}.
\end{equation*}
\end{definition}

Let $\gamma: \C \to \C^k$ be a non-degenerate complex curve. By finitely many Taylor expansions in $Q_1$ and affine transformations, we can reduce the decoupling to normalized non-degenerate curves. Indeed, an application of Taylor's theorem in a cube centered at $z_0$ with side length $\delta$ yields
\begin{equation*}
\gamma(z) = \gamma(z_0) + \sum_{m=1}^k \partial_z^m \gamma(z_0) \frac{(z-z_0)^m}{m!} +\partial_z^{k+1} \gamma(z_1) \frac{(z-z_0)^{k+1}}{(k+1)!}.
\end{equation*}
We let $\delta z' = z-z_0$ and carry out a linear change of variables parametrized by $A \in \R^{2k,2k}$ which is defined by:
\begin{equation*}
A( \delta^m F(\partial_z^m \gamma(z_0))) = e_{2m-1}, \quad A(\delta^m F(i \partial_z^m \gamma(z_0))) = e_{2m} \text{ for } m=1,\ldots,k.
\end{equation*}
Letting $A^t \underline{x}' = \underline{x}$, we find
\begin{equation*}
\underline{x} \cdot F(\gamma(z)) = \underline{x}' \cdot F((z',(z')^2/2!,\ldots,(z')^k/k!) + p(z')) + \text{Lin}(x')
\end{equation*}
with 
\begin{equation*}
\sup_{z' \in Q_1} | \partial_{z'}^j p(z') | \leq \delta C_\gamma \text{ for } j=1,\ldots,k+1.
\end{equation*}
Let $\bar{\gamma}(z') = (z',(z')^2/2,\ldots,(z')^k/k!) + p(z')$. Choosing $\delta = \delta(C_\gamma)$ we can satisfy the estimates for the derivatives. This incurs the dependence on $C_\gamma$ in \eqref{eq:ComplexDecouplingIntroduction}. 
Secondly, the change of variables by $A$ is non-singular by non-degeneracy of $\gamma$, which incurs the dependence on $c_\gamma$ in \eqref{eq:ComplexDecouplingIntroduction}.

\subsection{Decoupling constants}

After reducing to normalized non-degenerate curves defined in Subsection \ref{subsection:NormalizedCurves}, we consider decoupling constants for this class of curves.

\smallskip

We define the critical exponent (in $k$ complex, i.e. $2k$ real dimensions) by
\begin{equation*}
p_k = k(k+1),
\end{equation*}
and the linear decoupling constant at scale $\delta$ denoted by $\mathcal{D}_k(\delta)$ is defined as
\begin{equation*}
\begin{split}
\mathcal{D}_k(\delta) &= \inf \{ C > 0 : \| f \|_{L^{p_k}(\R^{2k})} \leq C \big( \sum_{\theta \in \Theta(\delta,\gamma)} \| f_\theta \|^2_{L^{p_k}(\R^{2k})} \big)^{\frac{1}{2}}, \quad \text{supp}(\hat{f}) \subseteq \mathcal{N}_{\delta,\gamma} \\
&\quad \quad \quad \text{ for a normalized non-degenerate curve } \gamma \}.
\end{split}
\end{equation*}

We note the following consequence of anisotropic rescaling:
\begin{lemma}[Anisotropic~Rescaling]
\label{lem:AnisotropicLinearRescaling}
Let $\delta \leq \alpha \leq 1$ and $f \in \mathcal{S}(\R^{2k})$ with $\text{supp}(\hat{f}) \subseteq \theta_{Q_\alpha} \cap (\bigcup_{Q \in \mathcal{Q}_\delta} \theta_{Q})$, $Q_\alpha \in \mathcal{Q}_\alpha$. Then it holds
\begin{equation*}
\| f \|_{L^{p_k}(\R^{2k})} \leq C \mathcal{D}_k(\delta \alpha^{-1}) \big( \sum_{\theta \in \Theta(\delta)} \| f_\theta \|_{L^{p_k}(\R^{2k})}^2 \big)^{\frac{1}{2}}.
\end{equation*}
\end{lemma}

\section{Proof of Theorem \ref{thm:DecouplingComplexCurves}}
\label{section:ProofComplexCurves}

We turn to the proof of Theorem \ref{thm:DecouplingComplexCurves} in earnest. Above we have reduced to normalized non-degenerate curves. We shall extend the proof of Theorem \ref{thm:RealDecouplingMomentCurve} due to Guo--Li--Yung--Zorin-Kranich \cite{GuoLiYungZorinKranich2021} to complex curves. Key steps are the following:
\begin{enumerate}
\item Reduction to bilinear decoupling via Whitney decomposition,
\item Recursive estimates for asymmetric bilinear decoupling estimates via lower-dimensional decoupling,
\item Iterating the recursive estimates to conclude the proof.
\end{enumerate}

\smallskip

The first and last step transpire from the real case \cite{GuoLiYungZorinKranich2021} in a straight-forward manner, and we shall be brief. The second step hinges on two transversality properties of complex non-degenerate curves:
\begin{itemize}
\item the full transversality of the tangent vectors:
\begin{equation}
\label{eq:ComplexNondegeneracyTransversality}
|\partial_z \gamma(z) \wedge_{\C} \partial^2_z \gamma(z) \wedge_{\C} \ldots \wedge_{\C} \partial_z^k \gamma(z) | \gtrsim 1, 
\end{equation}
\item the complex transversality at separated points $z_1,z_2 \in Q_1$ with $|z_1-z_2| \gtrsim 1$:
\begin{equation}
\label{eq:ComplexTransversality}
|\partial_z \gamma(z_1) \wedge \partial_z^2 \gamma(z_1) \wedge \ldots \partial_z^{\ell} \gamma(z_1) \wedge \partial_z \gamma(z_2) \wedge \ldots \wedge \partial_z^{k-\ell} \gamma(z_2) | \gtrsim 1.
\end{equation}
\end{itemize}
The implicit constants can be chosen uniform for the class of normalized non-degenerate curves. The complex transversality properties have to be translated suitably to Euclidean geometry.


\subsection{Reduction to bilinear decoupling}
Let $\delta = 2^{-N}$. In order to use the transversality of the moment curve, we consider a Whitney decomposition of $[0,1]^2 \times [0,1]^2$ into squares of length $2^{-n}$, $n=2,\ldots,N-2$ which are denoted by $Q_{J_1^n}$, $Q_{J_2^n}$ with $\ell(Q_{J_j^n}) = 2^{-n}$ and $\text{dist}(Q_{J_1^n}, Q_{J_2^n}) \sim 2^{-n}$. The pairs $(Q_{J_1^n},Q_{J_2^n})$ at scale $2^{-n}$ are denoted by $\mathcal{W}_n$. We can write
\begin{equation*}
f^2 = \sum_{n=2}^{N-2} \sum_{(Q_{J_1^n},Q_{J_2^n}) \in \mathcal{W}_n} f_{Q_{J_1^n}} f_{Q_{J_2^n}} + \sum_{Q \in \mathcal{Q}_{4 \delta}} f^2_{Q}.
\end{equation*}
The second term, which is unseparated, can be estimated in $\| \cdot \|_{L^{\frac{p_k}{2}}}$ via Lemma \ref{lem:AnisotropicLinearRescaling}. We use Minkowski's inequality for the first term to find
\begin{equation*}
\| \sum_{n=2}^{N-2} \sum_{(Q_{J_1^n},Q_{J_2^n}) \in \mathcal{W}_n} f_{Q_{J_1^n}} f_{Q_{J_2^n}} \|_{L^{\frac{p_k}{2}}} \leq \sum_{n=2}^{N-2} \sum_{(Q_{J_1^n},Q_{J_2^n}) \in \mathcal{W}_n} \| f_{Q_{J_1^n}} f_{Q_{J_2^n}} \|_{L^{\frac{p_k}{2}}(\R^{2k})}.
\end{equation*}
This motivates the definition of bilinear decoupling constants:
\begin{definition}
Let $Q_{I_1}, Q_{I_2} \subseteq [0,1]^2$ be two squares with $\ell(Q_{I_k}) \sim 1$ and \\ $\text{dist}(Q_{I_1},Q_{I_2}) \sim 1$. Let $\gamma$ be a normalized non-degenerate curve. The bilinear decoupling constant $\mathcal{B}_k(\delta,\gamma)$  is defined as infimum over $C > 0$ such that
\begin{equation*}
\big( \int |f_{Q_{I_1}} f_{Q_{I_2}}|^{\frac{p_k}{2}} \big)^{\frac{1}{p_k}} \leq C \big( \sum_{\theta \in \Theta(\delta,Q_{I_1})} \| f_\theta \|^2_{L^{p_k}} \big)^{\frac{1}{4}} \big( \sum_{\theta \in \Theta(\delta,Q_{I_2})} \| f_\theta \|^2_{L^{p_k}} \big)^{\frac{1}{4}}.
\end{equation*}
We define 
\begin{equation*}
\mathcal{B}_k(\delta) = \sup_{\gamma: \text{normalized}} \mathcal{B}_k(\delta,\gamma).
\end{equation*}
\end{definition}
We have the following analog of rescaling the linear expression: 
\begin{lemma}
Let $Q_{J_1}$, $Q_{J_2} \subseteq [0,1]^2$ be two squares with side length $\alpha < 1/4$, which satisfy $\text{dist}(Q_{J_1},Q_{J_2}) \sim \alpha$. Then it holds:
\begin{equation*}
\big( \int \big| f_{Q_{J_1}} f_{Q_{J_2}} |^{\frac{p_k}{2}} \big)^{\frac{1}{p_k}} \leq C \mathcal{B}_k(\delta \alpha^{-1}) \big( \sum_{\theta \in \Theta(Q_{J_1},\delta)} \| f_\theta \|_{L^{p_k}}^2 \big)^{\frac{1}{4}} \big( \sum_{\theta \in \Theta(Q_{J_2},\delta)} \| f_\theta \|^2_{L^{p_k}} \big)^{\frac{1}{4}}.
\end{equation*}

\end{lemma}

Hence, we obtain from the Whitney decomposition:
\begin{lemma}
Let $\delta = 2^{-N}$. The following estimate holds:
\begin{equation*}
\mathcal{D}_k(\delta) \leq C \big( 100 + \sum_{j=2}^{N-2} \mathcal{B}_k(\delta \cdot 2^{j}) \big)^{\frac{1}{2}}.
\end{equation*}
\end{lemma}
By defining the decoupling constant as decreasing (matching the expectation that $\mathcal{D}_k(\delta)$ increases as $\delta$ goes to zero) we obtain
\begin{equation*}
\mathcal{D}_k(\delta) \leq C \log(\delta^{-1}) \mathcal{B}_k(\delta),
\end{equation*}
and it suffices to estimate bilinear expressions
\begin{equation*}
\int \big| f_{Q_{J_1}} f_{Q_{J_2}} \big|^{p_k/2} \leq C^{p_k} \big( \sum_{\theta \in \Theta(Q_{J_1},\delta)} \| f_\theta \|^2_{L^{p_k}} \big)^{\frac{p_k}{4}} \big( \sum_{\theta \in \Theta(Q_{J_2},\delta)} \| f_\theta \|_{L^{p_k}(\R^{2k})}^{2} \big)^{p_k/4}
\end{equation*}
with $\ell(Q_{J_j}) \sim 1$ and $\text{dist}(Q_{J_1},Q_{J_2}) \sim 1$.

\subsection{Locally constant property and asymmetric decoupling}

It will be convenient to consider averaged versions of the functions dual to their frequency localization. Let $f \in \mathcal{S}(\R^{2k})$ with $\text{supp}(\hat{f}) \subseteq \bigcup_{\theta \in \Theta(Q,\delta)} \theta$. Then we have that $\text{supp}(\hat{f}) \subseteq M \theta_{Q}$\footnote{$M \theta_Q$ denotes the dilation of $\theta_Q$ by $M > 1$.}, and we consider the polar set
\begin{equation*}
\mathcal{U}^{\circ}_{Q} = \{ x \in \R^{2k} : \sup_{y \in C(\theta_{Q})} |\langle x, y \rangle| \leq 1 \}
\end{equation*}
with $C(\theta_{Q}) = \theta_{Q} - F(\gamma(c_{Q}))$ denoting the shift of $Q$ to the origin.
Let $\phi_{\mathcal{U}_{I_\alpha}^o}$ denote a bump function adapted to $\mathcal{U}_{I_\alpha}^o$, which is polynomially decreasing away from $\mathcal{U}_{I_\alpha}^{o}$:
\begin{equation*}
\phi_{\mathcal{U}_{I_\alpha}^o} = |\mathcal{U}_{I_\alpha}^o|^{-1} \inf \{ t \geq 1 : x/t \in \mathcal{U}_{I_\alpha}^o \}^{-A}
\end{equation*}
with $A > 2k $ and $A \geq k (k+1)$.

\smallskip

 We can quantify the uncertainty principle as follows:
\begin{lemma}
Let $f_{Q} \in \mathcal{S}(\R^{2k})$ with $\text{supp}(\hat{f}_{Q}) \subseteq \bigcup_{\theta \in \Theta(Q,\delta)} \theta$. Then the following holds for $1 \leq p < \infty$:
\begin{equation*}
|f_{Q}|^p \leq C_p |f_{Q}|^p * \phi_{\mathcal{U}_I^o}.
\end{equation*}
\end{lemma}
Now we can consider the asymmetric expressions: Let $1 \leq \ell < k$ and $0<\alpha,\beta \leq 1$ and let $f_{Q_{a}}$, $f_{Q_{b}} \in \mathcal{S}(\R^{2k})$ with $\ell(Q_{a}) = \delta^\alpha$ and $\ell(Q_{b}) = \delta^\beta$ and for $x \in \{ a, b \}$ we suppose that
\begin{equation*}
\text{supp}(\hat{f}_{Q_{x}}) \subseteq \bigcup_{\theta \in \Theta(\delta,Q_{x})} \theta.
\end{equation*}
The asymmetric bilinear decoupling constant $\mathcal{M}_{a,b}(\delta)$ is defined as infimum over $C>0$ such that
\begin{equation*}
\begin{split}
\int \big( |f_{Q_{a}}|^{p_\ell} * \phi_{Q_{a}} \big) \big( |f_{Q_{b}}|^{p_k - p_{\ell}} * \phi_{Q_{b}} \big) d\underline{x} &\leq C^{p_k} \big( \sum_{\theta \in \Theta(Q_{a},\delta)} \| f_\theta \|^2_{L^{p_k}} \big)^{p_{\ell}/2} \\
&\quad \quad \quad \times \big( \sum_{\theta \in \Theta(Q_{b},\delta)} \| f_\theta \|^2_{L^{p_k}} \big)^{\frac{p_k-p_{\ell}}{2}}
\end{split}
\end{equation*}
the inequality holds.

\subsection{Lower-dimensional decoupling via transversality}
\label{subsection:LowerDimDecoupling}
With notations from above,
we aim to use lower-dimensional decoupling on the expression:
\begin{equation*}
\int \big( |f_{Q_{a}}|^{p_{\ell}} * \phi_{Q_{a}} \big) \big( |f_{Q_{b}}|^{p_k - p_{\ell}} * \phi_{Q_{b}} \big).
\end{equation*}

For future use, we record the following standard localization of the global decoupling:
\begin{proposition}[Global-to-local-decoupling]
Let $\gamma: \C \to \C^k$ be a normalized non-degenerate curve and $f \in \mathcal{S}(\R^{2k})$ with $\text{supp}(\hat{f}) \subseteq \mathcal{N}(\delta,\gamma)$. Then we have the following localized decoupling inequality:
\begin{equation*}
\big( \dashint_{B_{\delta^{-k}}(a)} |f|^{p_k} \big)^{\frac{1}{p_k}} \leq C \mathcal{D}_k(\delta) \big( \sum_{\theta \in \Theta(\delta)} \| f_\theta \|^2_{L_{\text{avg}}^{p_k}(w_{B_{\delta^{-k}}})} \big)^{\frac{1}{2}}.
\end{equation*}
\end{proposition}

The following proposition is the key step in the decoupling iteration:
\begin{proposition}
\label{prop:TransverseDecoupling}
Let $0<a,b<1$ and let $Q_{a}$, $Q_{b} \subseteq [0,1]^2$ be two squares of length $\delta^a$, $\delta^b$ with $\text{dist}(Q_{a},Q_{b}) \sim 1$. Define $b'=((k-\ell+1)/\ell )b$ and suppose that $a \leq b'$. Then the following estimate holds:
\begin{equation}
\label{eq:TransversalDecoupling}
\begin{split}
&\quad \int_{\R^{2k}} \big( |f_{Q_{a}}|^{p_{\ell}} * \phi_{Q_{a}} \big) \big( |f_{Q_{b}}|^{p_k - p_{\ell}} * \phi_{Q_{b}} \big) d\underline{x} \\
&\leq C \mathcal{D}_{\ell}^{p_{\ell}}(\delta^{b' \ell}) \big( \sum_{\theta \in \Theta(Q_{a},\delta^{b'})} \big[ \int \big( |f_\theta|^{p_{\ell}} * \phi_{Q_{\theta}} \big) \big( |f_{Q_{b}}|^{p_k - p_{\ell}} * \phi_{Q_{b}} \big) \big]^2 \big)^{\frac{1}{2}}.
\end{split}
\end{equation}
\end{proposition}

By this the following is immediate:
\begin{corollary}
\label{cor:BilDecEstimate}
Under the assumptions of Proposition \ref{prop:TransverseDecoupling}, we have
\begin{equation*}
\mathcal{M}_{p_{\ell},p_k-p_{\ell}}(\delta^a,\delta^b) \leq C \mathcal{D}_{\ell}(\delta^{b' \ell}) \mathcal{M}_{p_{\ell},p_k-p_{\ell}}(\delta^{b'},\delta^b).
\end{equation*}
\end{corollary}

Before we turn to the proof of Proposition \ref{prop:TransverseDecoupling}, we make transversality considerations.
The complex non-degeneracy \eqref{eq:ComplexNondegeneracyTransversality}
and the trans\-versality condition for $|z_1-z_2| \gtrsim 1$ 
%
have to be translated to $\R^{2k}$ to be utilized. We have the following lemma:
\begin{lemma}
\label{lem:RealComplexDeterminant}
Let $v_1,\ldots,v_k \in \C^k$ and $i^2 = -1$.
For $m \in \{1,\ldots,k\}$ and $j\in\{1,2\}$ let
\begin{equation*}
v^m_j = \begin{cases}
F(v_m), \quad j=1, \\
F(i \cdot v_m), \quad j=2.
\end{cases}
\end{equation*}
Then we obtain for the determinant
\begin{equation*}
| \det_{\C}(v_1,\ldots,v_k)|^2 = |\det_{\R}(v_1^1, v_2^1, \ldots, v^k_1,v^k_2)|.
\end{equation*}
\end{lemma}
\begin{proof}
For the proof we consider the sequence of complex elementary row operations which transform the complex matrix to the unit matrix. This sequence we translate into real elementary row operations of the real matrix:
Elementary row operations are
\begin{itemize}
\item swapping of rows,
\item multiplying rows with a complex number,
\item adding complex multiples of a row.
\end{itemize}
Regarding the swapping of rows we note that this leaves the modulus of the complex determinant invariant. It translates to swapping a double row, which leaves the determinant invariant.

Furthermore, multiplying a complex row with a complex number gives
\begin{equation*}
\begin{split}
&\quad (\alpha + i \beta) (a_{j1} + i b_{j1}, a_{j2} + i b_{j2}, \ldots, a_{jn} + i b_{jn}) \\
&= (\alpha a_{j1} - \beta b_{j1} + i (\beta a_{j1} + \alpha b_{j1}), \alpha a_{j2} - \beta b_{j2} + i (\beta a_{j2} + \alpha b_{j2}), \ldots ).
\end{split}
\end{equation*}
This is translated by multiplying a block matrix
\begin{equation*}
\begin{pmatrix}
\alpha & \beta & 0 & 0 & 0 & 0 & \cdots \\
- \beta & \alpha & 0 & 0 & 0 & 0 & \cdots \\
0 & 0 & 1 & 0  & 0 & 0 & \cdots \\
\vdots & \vdots & 0 & 1 & 0 & 0 & \cdots \\
\vdots & \vdots & 0 & 0 & \ddots & 0 & \cdots
\end{pmatrix}
\end{equation*}
to the $\R^{2k}$ matrix. Clearly, the modulus of the real matrix changes with the modulus square of the complex matrix.
\end{proof}

We obtain the following consequences:

\begin{lemma}
\label{lem:RealComplexNondegeneracy}
Let $\gamma : \C \to \C^n$ be a normalized non-degenerate complex curve. For any $z \in Q_1$ we have that
\begin{equation}
\label{eq:RealNonDegeneracy}
|\Gamma^1_1(z) \wedge_\R \Gamma^1_2(z) \wedge_\R \ldots \wedge_\R \Gamma_1^n(z) \wedge_{\R} \Gamma^n_2(z)| \gtrsim 1.
\end{equation}
Let $z_1,z_2 \in Q_1$ with $|z_1-z_2| \gtrsim 1$. Then it holds
\begin{equation}
\label{eq:RealTransversality}
| \Gamma^1_1(z_1) \wedge_\R \Gamma^1_2(z_1) \wedge_\R \ldots \Gamma^\ell_1(z_1) \wedge_\R \Gamma^{\ell}_2(z_1) \wedge_\R \Gamma^1_1(z_2) \wedge_\R \ldots \wedge_\R \Gamma^{n-\ell}_1(z_2) \wedge_\R \Gamma^{n-\ell}_2(z_2) | \gtrsim 1.
\end{equation}
\end{lemma}
\begin{proof}
\eqref{eq:RealNonDegeneracy} follows from applying Lemma \ref{lem:RealComplexDeterminant} to the complex non-degeneracy:
\begin{equation*}
|\partial_z \gamma(z) \wedge_\C \ldots \wedge_\C \partial_z^n \gamma(z) | \gtrsim 1.
\end{equation*}

\smallskip

With Lemma \ref{lem:RealComplexDeterminant} at hand, for the second identity it suffices to show
\begin{equation}
\label{eq:ComplexTransversalitySeparatedPoints}
|\partial_z \gamma(z_1) \wedge_{\C} \partial^2_z \gamma(z_1) \wedge_{\C} \ldots \wedge \partial_z^\ell \gamma(z_1) \wedge_{\C} \partial_z \gamma(z_2) \wedge_{\C} \ldots \wedge_{\C} \partial_z^{n- \ell} \gamma(z_2)| \gtrsim 1.
\end{equation}

To this end, we complexify the argument in \cite[Lemma~3.5]{GuoLiYungZorinKranich2021}. 
 The complex Taylor expansion of $\gamma$ around $z_1$ gives
\begin{equation*}
\partial_z^i \gamma(z_2)= \sum_{j=i}^k \frac{1}{(j-i)!} \partial_z^j \gamma(z_1) (z_2 - z_1)^{j-i} + \mathcal{O}(\partial_z^{k+1} \gamma \cdot (z_2-z_1)^{k+1-j}).
\end{equation*}
We use $\C$-multilinearity and the derivative bounds for normalized curves to obtain like in \cite{GuoLiYungZorinKranich2021}:
\begin{equation*}
\begin{split}
\text{lhs} \eqref{eq:ComplexTransversalitySeparatedPoints} &= \binom{k}{\ell} \big( \prod_{i=1}^\ell i ! \big) \big( \prod_{j=1}^{n-\ell} j ! \big) |\partial_z \gamma(z_1) \wedge_{\C} \ldots \wedge_{\C} \partial_z^{n} \gamma(z_1) | |z_2 - z_1|^{\ell (k-\ell)}\\
&\quad + \mathcal{O}(k! \cdot (2k)^{k-1} \frac{2^{-10k}}{k! (2k)^k} ) \gtrsim 1.
\end{split}
\end{equation*}

\end{proof}

We are ready for the proof of Proposition \ref{prop:TransverseDecoupling}:

\begin{proof}[Proof~of~Proposition~\ref{prop:TransverseDecoupling}]
For $z_2 = c(Q_b)_1 + i c(Q_b)_2$ we denote by \\$V^{(k-\ell)}_{\R} = \langle \Gamma^1_1(z_2),\Gamma^1_2(z_2),\ldots,\Gamma_2^{k-\ell}(z_2) \rangle$ the real tangent space of order $k-\ell$ and let
\begin{equation*}
H = \langle \Gamma^1_1(z_2), \Gamma^1_2(z_2), \ldots , \Gamma_1^{k-\ell}(z_2), \Gamma^{k-\ell}_2(z_2) \rangle^{\perp}.
\end{equation*}
By \eqref{eq:RealNonDegeneracy}, we have $H \equiv \langle \Gamma_1^{k-\ell+1}(z_2),\ldots, \Gamma_1^k(z_2), \Gamma_2^k(z_2) \rangle$ and by the uncertainty principle,
$|f_{Q_{b}}|$ is essentially constant on a scale of $\delta^{-b (k-\ell +1)} = \delta^{-b' \ell}$.

This yields the following:
\begin{equation*}
\sup_{z \in (B_{\delta^{-b' \ell}} \cap H)+y} \big( |f_{Q_{b}}|^{p_k-p_{\ell}} * \phi_{Q_{b}} \big)(z) \leq C \big( |f_{Q_{b}}|^{p_k - p_{\ell}} * \phi_{Q_{b}} \big)(y).
\end{equation*}
We write by Fubini's theorem:
\begin{equation}
\label{eq:Isolation}
\begin{split}
&\quad \int \big( |f_{Q_{a}}|^{p_\ell} * \phi_{Q_{a}} \big) \big( |f_{Q_{b}}|^{p_k - p_{\ell}} * \phi_{Q_{b}} \big) \\
&= \int_{z \in \R^{2k}} dz \dashint_{(B_{\delta^{-b' \ell}} \cap H)+z} \big[ \big( |f_{Q_a}|^{p_{\ell}} * \phi_{Q_{a}} \big)(y) \; \big( |f_{Q_{b}}|^{p_k - p_{\ell}} * \phi_{Q_{b}} \big)(y) dy \big] \\
&\lesssim \int_{z \in \R^{2k}} dz \big[ \dashint_{(B_{\delta^{-b' \ell}} \cap H)+z} \big[ \big( |f_{Q_a}|^{p_{\ell}} * \phi_{Q_{a}} \big)(y) \big] \big] \, |f_{Q_{b}}|^{p_k - p_{\ell}} * \phi_{Q_{b}} \big)(z)
\end{split}
\end{equation}
We shall apply $\ell$-dimensional (complex moment curve) decoupling on the inner integral. Let $\hat{H}^{\R} = \hat{\R}^{2k} / V^{\R}_{k-\ell}(z_2) = \hat{\R}^{2k} / H^{\perp}$ denote the Pontryagin dual of $H$ and $\pi^{\R}_{\hat{H}}:\R^{2k} \to \hat{H}$ the corresponding projection. It will be useful to observe that $\hat{H}$ can also be perceived as complex vector space: 

\smallskip

Consider the complex tangent space of order $k-\ell$ at $z_2$
\begin{equation*}
V^{(\C)}_{k-\ell}(z_2) = \langle \partial_z \gamma(z_2), \ldots, \partial_z^{k-\ell} \gamma(z_2) \rangle_\C, \text{ and } \hat{H}^{\C} = \hat{\C}^{k} / V^{(\C)}_{k-\ell}(z_2).
\end{equation*}
Since $F(V^{(\C)}_{k-\ell}(z_2)) = V^{(\R)}_{k-\ell}(z_2)$, we have $F(\hat{H}^{\C}) = \hat{H}^{\R}$. We remark that as a consequence of Lemma \ref{lem:RealComplexNondegeneracy} we have
\begin{equation}
\label{eq:QuantitativeTransversality}
\begin{split}
\hat{\R}^{2k} &= V_{\ell}^{(\R)}(z_1) + V^{(\R)}_{k-\ell}(z_2), \\
\hat{\C}^{k} &= V_{\ell}^{(\C)}(z_1) + V^{(\C)}_{k-\ell}(z_2)
\end{split}
\end{equation}
in a quantitatively transverse sense, i.e., for $a \in V^{(\K)}_{\ell}(z_1)$, $b \in V^{(\K)}_{k-\ell}(z_2)$, we have
\begin{equation*}
|a \wedge_{\K} b| \gtrsim_{|z_1-z_2|} 1.
\end{equation*}

\medskip

To apply lower-dimensional decoupling on the averaged expression in \eqref{eq:Isolation}, we verify the following two claims:
\begin{enumerate}
\item $\pi_{\hat{H}^{\C}} \gamma \big\vert_{Q_a}$ is a non-degenerate complex curve,
\item $\text{supp}(\widehat{f \big\vert_{H+z}}) \subseteq \mathcal{N}(M \delta^a, \pi_{\hat{H}^{\C}} \gamma)$.
\end{enumerate}

\emph{Ad 1):} Change the basis in $\hat{\C}^k$ to $\{ \partial_z \gamma(z_1), \ldots, \partial_z^{\ell} \gamma(z_1), \partial_z \gamma(z_2), \ldots, \partial_z^{k-\ell} \gamma(z_2) \}$ with modulus of the determinant invoking Lemma \ref{lem:RealComplexNondegeneracy} given by
\begin{equation*}
|\partial_z \gamma(z_1) \wedge_{\C} \ldots \wedge_{\C} \partial_z^{\ell} \gamma(z_1) \wedge_{\C} \partial_z \gamma(z_2) \ldots \wedge_{\C} \partial_z^{k-\ell} \gamma(z_2) | \geq c(|z_1-z_2|) \gtrsim 1.
\end{equation*}
This implies
\begin{equation*}
|\partial_z \pi_{\hat{H}^{\C}} \gamma(z_1) \wedge_{\C} \ldots \wedge_{\C} \partial_z^{\ell} \pi_{\hat{H}^{\C}} \gamma(z_1) | \geq c(|z_1-z_2|) \gtrsim 1.
\end{equation*}

With $\gamma$ being a normalized non-degenerate complex curve, by breaking the support, Taylor expansion and affine transformation, $\pi_{\hat{H}^{\C}} \gamma$ can be decomposed into a fixed number of normalized non-degenerate complex curves.

\medskip

\emph{Ad 2):} We will prove that for $Q(\gamma) \subseteq Q_a$ it holds
\begin{equation}
\label{eq:RectangleProjection}
\pi_{\hat{H}^{\R}}(\theta_{Q(\gamma)}) \subseteq M \theta_{Q(\pi_{\hat{H}^{\C}} \gamma)},
\end{equation}
from which the claim follows.

To verify this, we perceive $\theta_{Q(\gamma)}$ as complex anisotropic rectangle. Let $z_1 = c(Q(\gamma))_1 + i c(Q(\gamma))_2$ denote the center of $Q(\gamma)$ perceived as complex number, let
\begin{equation*}
\theta^{\C}_{Q(\gamma)} = \{ \gamma(z_1) + a_1 \partial_z \gamma(z_1) + \ldots + a_k \partial_z^{k} \gamma(z_1) \, : \; a_i \in \C, \, |a_i| \leq 4 \ell(Q(\gamma))^i \},
\end{equation*}
and observe that $F(\theta^{\C}_{Q(\gamma)}) = \theta_{Q(\gamma)}$. 

For the first $\ell$ complex sides of $\theta^{\C}_{Q(\gamma)}$ we observe that 
\begin{equation*}
(\partial^i_z \pi_{\hat{H}^{\C}} \gamma(z_1))_{i=1,\ldots,\ell} = ( \pi_{\hat{H}^{\C}} \partial^i_z \gamma(z_1))_{i=1,\ldots,\ell} =  (\partial^i_z \gamma(z_1) + V^{(k-\ell)}_{\C}(z_2) )_{i=1,\ldots,\ell}
\end{equation*}
 is a basis of $\hat{\C}^k / V^{(k-\ell)}_{\C}(z_2)$, which is immediate from the quantitative transversality.
 
 We compute the effect of $\pi_{\hat{H}^{\C}}$ on the remaining $k-\ell$ sides: To this end, we write
\begin{equation*}
\partial_z^{\ell+1} \gamma(z_1) = \sum_{i=1}^{\ell} a_i \partial_z^i \gamma(z_1) + \sum_{j=1}^{k-\ell} b_j \partial_z^j \gamma(z_2)
\end{equation*}
and we shall see that $|a_i| \lesssim 1$. To simplify notations, we exemplarily estimate $a_1$. We take the wedge product of the above identity with 
\begin{equation*}
\partial^2_{z} \gamma(z_1) \wedge_{\C} \partial^3_z \gamma(z_1) \ldots \wedge_{\C} \partial^{\ell}_z \gamma(z_1) \wedge_{\C} \partial_z \gamma(z_2) \wedge_{\C} \ldots \wedge_{\C} \partial_z^{k-\ell} \gamma(z_2)
\end{equation*}
to find
\small
\begin{equation*}
|a_1| \leq \frac{|\partial_z^{\ell+1} \gamma(z_1) \wedge_{\C} \partial^2_{z} \gamma(z_1) \wedge_{\C} \partial^3_z \gamma(z_1) \ldots \wedge_{\C} \partial^{\ell}_z \gamma(z_1) \wedge_{\C} \partial_z \gamma(z_2) \wedge_{\C} \ldots \wedge_{\C} \partial_z^{k-\ell} \gamma(z_2)| }{ |\partial_z \gamma(z_1) \wedge_{\C} \partial_z^2 \gamma(z_1) \ldots \wedge_{\C} \partial_z^{\ell} \gamma(z_1) \wedge_{\C} \partial_z \gamma(z_2) \ldots \wedge_{\C} \partial_z^{k-\ell} \gamma(z_2)|}.
\end{equation*}
\normalsize
Then, it is a consequence of the quantitative transversality and boundedness of the derivatives that $|a_1| \lesssim 1$.


\medskip

With 1) and 2) at hand, we obtain
\begin{equation*}
\dashint_{H \cap B_{\delta^{-b' \ell}}(z)} |f_{Q_{a}}|^{p_\ell} * \phi_{Q_{a}} \leq C \mathcal{D}_{\ell}^{p_\ell} (\delta^{b' \ell}) \big( \sum_{\theta \in \Theta(\delta)} \| f_\theta \|^2_{L^{p_{\ell}}(\phi_{H \cap \delta^{-b' \ell}})} \big)^{p_{\ell}/2},
\end{equation*}
which plugged into \eqref{eq:Isolation} yields the claim.

%
\end{proof}

This extends the notion of full lower-dimensional transversality to the complex case.

\subsection{Bilinear decoupling iteration}

The proof of Theorem \ref{thm:DecouplingComplexCurves} is concluded following the bilinear decoupling iteration from \cite[Section~4]{GuoLiYungZorinKranich2021}. We shall be brief.

 The bedrock of the decoupling iteration is the inequality (see Corollary \ref{cor:BilDecEstimate}):
\begin{equation*}
\mathcal{M}_{p_{\ell},p_k-p_{\ell}}(\delta^a,\delta^b) \leq C \mathcal{D}_{\ell}(\delta^{b' \ell}) \mathcal{M}_{p_{\ell},p_k-p_{\ell}}(\delta^{b'},\delta^b)
\end{equation*}
with $b'=((k-\ell+1)/\ell) b$.

\smallskip

A simple, but important relation is established by H\"older's inequality (see \cite[Lemma~4.1]{GuoLiYungZorinKranich2021}): For $\ell \in \{1,\ldots,k-1 \}$, if $a,b\in(0,1)$ and $\delta \in (0,1)$, then
\begin{equation*}
\mathcal{M}_{p_{\ell},p_k-p_{\ell}}(\delta^a,\delta^b) \leq \mathcal{M}_{p_k-p_{\ell},p_{\ell}}(\delta^b,\delta^a)^{\frac{1}{k-\ell+1}} \mathcal{M}_{p_{\ell-1},p_k-p_{\ell-1}}(\delta^a,\delta^b)^{\frac{k-\ell}{k-\ell+1}}.
\end{equation*}

Now we induct on dimension. Since $k=1$ is immediate from Plancherel's theorem, suppose that $k > 1$ and Theorem \ref{thm:DecouplingComplexCurves} has been proved for all lower values of $k$.
Combining the above displays like in \cite[Lemma~4.2]{GuoLiYungZorinKranich2021} and iteration like in \cite[Section~4]{GuoLiYungZorinKranich2021} will finish the proof of
$\mathcal{D}_k(\delta) \lesssim_\varepsilon \delta^{-\varepsilon}$ and the proof of Theorem \ref{thm:DecouplingComplexCurves}.

$\hfill \Box$

\section{Logarithmic improvement of decoupling for the cubic moment curve}

In this section we show the logarithmic sharpening of the decoupling inequality for the cubic (real) moment curve. Although the qualitative arguments were presented above, to carefully keep track of the constants we provide details. Additional input comes from the improved decoupling for the non-degenerate curves in $1+1$-dimensions due to \cite{GuthMaldagueWang2024} and quantifying the decoupling iteration for the cubic moment curve.

\label{section:LogarithmicImprovement}
\subsection{Preliminaries}
\label{subsection:Preliminaries}
\subsubsection{Notations}
For $k \in \N$ in the following let $\gamma_k:[0,1] \to \R^k$, $(\gamma_k(t))_m= t^m/m!$ denote the moment curve in $\R^k$. Let $\delta \in \N^{-1} = \{ \frac{1}{n} : n \in \N \}$. For a closed interval $[a,b] = I \subseteq [0,1]$ with $|I| \delta^{-1} \in \N$ we denote by $\mathcal{I}(I,\delta)$ the decomposition into closed intervals of length $\delta$: $I = \bigcup_{j=0}^{N-1} [a+j\delta, a+ (j+1) \delta]$ with $N \delta = |I|$. If $I = [0,1]$, we let $\mathcal{I}(I,\delta) = \mathcal{I}(\delta)$. We define for a curve $\gamma:[0,1] \to \R^k$ the parallelepiped $\theta_{J,\gamma}$ with center $c_J$ of dimensions $4 |J| \times 4 |J|^2 \times \ldots \times 4 |J|^k$ into directions $\partial \gamma(c_J), \ldots, \partial^k \gamma(c_J)$. When $\gamma = \gamma_k$, we simply write $\theta_{J}$.

\smallskip

In the following, for an interval $J$ and curve $\gamma$, let $\mathcal{U}_{J,\gamma}^o$ denote the parallelepiped centered at the origin, which is dual to $\theta_{J,\gamma}$, that is
\begin{equation*}
\mathcal{U}_{J,\gamma}^o = \{ x \in \R^k \, : \, \big| \langle x, \partial^i \gamma(c_J) \rangle \big| \leq \frac{1}{4} |J|^{-i}, \quad 1 \leq i \leq k \}.
\end{equation*}
This is a parallelepiped of size $\sim |J|^{-1} \times |J|^{-2} \times |J|^{-3}$. We define a bump function adapted to $\mathcal{U}_J^o$ by
\begin{equation*}
\phi_J(x) = |\mathcal{U}_J^o|^{-1} \inf \{ t \geq 1 \, : x/t \in \mathcal{U}_J^o \}^{-A}
\end{equation*}
with $A = \max( 10k, k (k+1)/2)+1$.

\smallskip

$\phi_J$ is $L^1$-normalized as can be seen from anisotropic dilation: $\int_{\R^k} \phi_J(x) dx \leq C_{4,k}$.
For $k \in \N$, we define the critical decoupling exponent for the moment curve $\Gamma_k$ as $p_k = k(k+1)$.
In this section we define the decoupling constant $\mathcal{D}_{k}(\delta)$ for $\gamma_k$ as monotone decreasing in $\delta$ (this means if $\delta^{-1}$ becomes larger, the decoupling constant is also supposed to become larger) and smallest constant, which satisfies
\begin{equation*}
\big\| \sum_{I \in \mathcal{I}(\delta)} f_I \big\|_{L^{p_k}(\R^k)} \leq \mathcal{D}_{k}(\delta) \big( \sum_{I \in \mathcal{I}(\delta)} \| f_I \|_{L^{p_k}(\R^k)}^2 \big)^{\frac{1}{2}}.
\end{equation*}
To ease notation, in this section we denote the decoupling constant for the real moment curve as well with $\mathcal{D}_k$.

\subsubsection{Bilinear reduction, and uncertainty principle}

We define the bilinear decoupling constant $\mathcal{B}_{k}(\delta)$ as smallest constant decreasing in $\delta$ such that
\begin{equation*}
\begin{split}
\big( \int_{\R^k} |\sum_{J_1 \in \mathcal{P}(I_1,\delta)} f_{J_1}|^{p_k/2} |\sum_{J_2 \in \mathcal{P}(I_2,\delta)} f_{J_2}|^{p_k/2} \big)^{1/p_k} &\leq \mathcal{B}_{k}(\delta) \big( \sum_{J_1 \in \mathcal{I}(I_1,\delta)} \| f_{J_1} \|^2_{L^{p_k}} \big)^{1/4} \\
&\quad \times  \big( \sum_{J_2 \in \mathcal{I}(I_2,\delta)} \| f_{J_2} \|^2_{L^{p_k}} \big)^{1/4}.
\end{split}
\end{equation*}
Record the linear-to-bilinear reduction:
\begin{lemma}[{Bilinear~reduction,~\cite[Lemma~2.2]{GuoLiYungZorinKranich2021}}]
If $\delta = 2^{-M}$, then there is $C_1 > 0$ such that
\begin{equation*}
\mathcal{D}_{k}(\delta) \leq C_1 \big( 1+ \sum_{n=2}^M \mathcal{B}_{k}(2^{-M+n-2})^2 \big)^{1/2}.
\end{equation*}
\end{lemma}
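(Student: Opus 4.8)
\emph{Proof plan.} The plan is to run the Bourgain--Guth broad--narrow decomposition, splitting each interval into four children at every dyadic step, combine it with the affine self-similarity of the moment curve, and sum the resulting pieces in $L^{p_k}$; only $p_k = k(k+1) \ge 2$ is used about the exponent (for $k=1$ the inequality is trivial). Fix $x \in \R^k$; for a dyadic interval $I$ write $f_I = \sum_{J \in \mathcal{P}(I,\delta)} f_J$, and for $2 \le n \le M$ set
\[
\mathrm{bil}_n(x) = \max\bigl\{\,|f_J(x)|^{\frac12}|f_{J'}(x)|^{\frac12} \,:\, J, J' \subseteq I,\ |I| = 2^{-(n-2)},\ |J| = |J'| = 2^{-n},\ \text{dist}(J,J') \ge 2^{-n}\,\bigr\}.
\]
The core of the argument is the pointwise estimate $|f_{[0,1]}(x)| \lesssim \max_{J \in \mathcal{P}(\delta)}|f_J(x)| + \max_{2 \le n \le M}\mathrm{bil}_n(x)$, which on raising to the $p_k$-th power and using $(\max_n a_n)^{p_k} = \max_n a_n^{p_k} \le \sum_n a_n^{p_k}$ becomes
\[
|f_{[0,1]}(x)|^{p_k} \lesssim \sum_{J \in \mathcal{P}(\delta)}|f_J(x)|^{p_k} + \sum_{n=2}^{M}\mathrm{bil}_n(x)^{p_k}.
\]

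To obtain the pointwise estimate I would call $x$ \emph{broad at scale $n$} if, among the four length-$2^{-n}$ children of the interval $I \in \mathcal{P}(2^{-(n-2)})$ that maximises $I \mapsto |f_I(x)|$, two \emph{non-adjacent} ones have values both $\ge \tfrac1{10}$ of that maximum (non-adjacent children of a fourfold split are automatically at distance $\ge 2^{-n}$, so they feed $\mathrm{bil}_n$). If $x$ is broad at a first scale $n_0$ then $\mathrm{bil}_{n_0}(x) \gtrsim \max_{|I| = 2^{-(n_0-2)}}|f_I(x)|$, and it remains to check that absence of broadness at scales $2, \dots, n_0 - 1$ forces $|f_{[0,1]}(x)| \lesssim \max_{|I| = 2^{-(n_0-2)}}|f_I(x)|$; if $x$ is broad at no scale, the same reasoning carried through all of $2, \dots, M$ gives $|f_{[0,1]}(x)| \lesssim \max_{|J| = \delta}|f_J(x)|$. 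In either case the two displayed inequalities follow.

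Next I would rescale. The moment curve is affinely self-similar: for a dyadic $I$ there is an affine map $A_I$ of $\R^k$ with $A_I(\Gamma_k(\varphi_I(s))) = \Gamma_k(s)$ for all $s \in [0,1]$, where $\varphi_I : [0,1] \to I$ is the affine bijection; because $\mathcal{U}_{J,\Gamma_k}$ is defined with the generous dilation factor $4$, the rescaling associated to $A_I$ takes $\mathcal{U}_{J,\Gamma_k}$ ($J \subseteq I$) to $\mathcal{U}_{\varphi_I^{-1}(J),\Gamma_k}$ up to rapidly decaying tails absorbed by the weights $\phi_I$, and since $|\det A_I|$ cancels between the two sides of the decoupling inequality, $\mathcal{D}_k$ and $\mathcal{B}_k$ are exactly scale invariant. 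Rescaling a length-$2^{-(n-2)}$ interval $I$ to $[0,1]$ thus sends a non-adjacent pair $J, J'$ of its length-$2^{-n}$ children to subintervals at distance $\ge \tfrac14$ and the $\delta$-intervals to $2^{-M+n-2}$-intervals, so that
\[
\int_{\R^k}|f_J|^{p_k/2}|f_{J'}|^{p_k/2} \lesssim \mathcal{B}_k(2^{-M+n-2})^{p_k}\Bigl(\sum_{\tilde J \subseteq J}\|f_{\tilde J}\|_{L^{p_k}}^2\Bigr)^{p_k/4}\Bigl(\sum_{\tilde J \subseteq J'}\|f_{\tilde J}\|_{L^{p_k}}^2\Bigr)^{p_k/4}.
\]
Integrating the pointwise estimate with $A := (\sum_{J \in \mathcal{P}(\delta)}\|f_J\|_{L^{p_k}}^2)^{1/2}$, the first sum gives $\sum_J \|f_J\|_{L^{p_k}}^{p_k} \le A^{p_k}$, and for the $n$-th summand I would bound the maximum defining $\mathrm{bil}_n(x)^{p_k}$ by the sum over the admissible triples $(I, J, J')$, apply the last display, split the products by AM--GM, and sum over the length-$2^{-n}$ intervals using $\sum_{|J| = 2^{-n}} a_J^{p_k} \le (\max_J a_J)^{p_k - 2}\sum_J a_J^2 \le A^{p_k}$ with $a_J := (\sum_{\tilde J \subseteq J}\|f_{\tilde J}\|_{L^{p_k}}^2)^{1/2}$, obtaining $\int \mathrm{bil}_n^{p_k} \lesssim \mathcal{B}_k(2^{-M+n-2})^{p_k}A^{p_k}$. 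Summing over $n$, taking $p_k$-th roots, and using $\ell^{p_k} \subseteq \ell^2$ gives the stated bound.

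The one non-routine step is the pointwise estimate, and specifically the implication ``no broadness at the coarser scales $\Rightarrow$ $|f_{[0,1]}(x)|$ is comparable, with an \emph{absolute} constant, to the value on the dominant coarse interval''. The naive approach --- in a non-broad step the mass of $f_I$ concentrates in one, or in two consecutive, of its four children, whence $|f_I(x)| \le C|f_{I'}(x)|$ for a child $I'$, and then recurse --- loses a factor $C > 1$ at each of $\sim M$ scales, i.e.\ a factor $C^M = \delta^{-\log_2 C}$ overall, which would make the lemma empty. The broad/narrow dichotomy --- the choice of separation, the comparability threshold, and the treatment of the two-adjacent-children alternative --- has to be set up so that each $x$ is charged to a single scale and the narrow regime is genuinely a decoupling at a strictly coarser scale with an absolute constant; this bookkeeping is the substance of \cite[Lemma~2.2]{GuoLiYungZorinKranich2021}.
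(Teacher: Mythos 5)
The paper itself gives no proof of this lemma; it cites \cite[Lemma~2.2]{GuoLiYungZorinKranich2021} and records that the argument is a \emph{Whitney decomposition} plus affine rescaling: one expands $|f_{[0,1]}|^2=\sum_{J,J'}f_J\overline{f_{J'}}$, organizes the off-diagonal pairs into the $O(M)$ scales with $|J|=|J'|\sim\text{dist}(J,J')$ (the near-diagonal pairs give the $1$), applies the triangle inequality in $L^{p_k/2}$ over these scales (whence the sum over $n$), and rescales each scale to a unit-separated configuration to produce $\mathcal{B}_k(2^{-M+n-2})$. Your route --- a pointwise Bourgain--Guth broad/narrow dichotomy --- is genuinely different, and it contains a gap that you flag yourself but then dismiss as ``bookkeeping'' supplied by the cited lemma; the cited lemma does something else entirely, and the paper even remarks that it deliberately avoids the broad/narrow reduction in favour of the Whitney decomposition.

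The gap is that your central pointwise estimate $|f_{[0,1]}(x)|\lesssim \max_J|f_J(x)|+\max_{n}\mathrm{bil}_n(x)$, with an absolute constant, is false --- not merely resistant to the naive iteration. Prescribe $f_J(x)=1$ for the $\delta$-intervals meeting the Cantor-type set $S=\{t\in[0,1]:\text{all base-4 digits of }t\text{ lie in }\{1,2\}\}$ and $f_J(x)=0$ otherwise (such values at a single point $x$ are realizable with the required Fourier supports). For every dyadic $I$, the set $S\cap I$ meets at most two \emph{adjacent} length-$|I|/4$ grandchildren of $I$, so $\mathrm{bil}_n(x)=0$ for every $n$; yet $|f_{[0,1]}(x)|\sim 2^{M/2}=\delta^{-1/2}$ while $\max_J|f_J(x)|=1$. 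This is exactly the $C^{M}$ narrow-regime loss you identify: the narrow step unavoidably replaces an interval by \emph{two} adjacent children of comparable size, and no choice of separation or comparability thresholds within a fixed fourfold split repairs it. The known fixes are (a) the Whitney bilinearization of $|f|^2$ described above, which is an exact identity summed over scales in $L^{p_k/2}$ with no pointwise recursion, or (b) a broad/narrow reduction with a large auxiliary parameter $K$ run as a recursion on the decoupling constant itself, roughly $\mathcal{D}_k(\delta)\le 2\,\mathcal{D}_k(K\delta)+C(K)(\cdots)$, as in \cite{Li2021}. The remaining steps of your write-up (affine rescaling of non-adjacent grandchildren to unit separation, the bounded-overlap count of triples, and $(\sum_n a_n^{p_k})^{1/p_k}\le(\sum_n a_n^2)^{1/2}$ for $p_k\ge2$) are routine and essentially correct, but they all rest on the false pointwise inequality.
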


Record submultiplicativity by affine rescaling:
\begin{lemma}
We have for $\delta$, $\sigma$, $\delta/32 \sigma \in \N^{-1}$:
\begin{equation}
\label{eq:SubMultiplicativity}
\mathcal{D}_k(\delta) \leq \mathcal{D}_k(\sigma) \mathcal{D}_k(\delta/32 \sigma).
\end{equation}
\end{lemma}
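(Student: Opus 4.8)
\emph{Proof proposal.} The plan is the standard two-scale argument: first decouple at an intermediate scale comparable to $\sigma$, then apply affine rescaling inside each cap of that scale, with the constant $32$ absorbing the arithmetic needed to make the scales nest.

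\emph{Step 1: choice of the intermediate scale.} Write $\sigma^{-1}=m$ and $\delta^{-1}=n$. The hypothesis $\delta/(32\sigma)\in\N^{-1}$ says $m\mid 32n$. Splitting $m=2^aq$ with $q$ odd, one gets $q\mid n$ (since $\gcd(q,32)=1$), and hence a divisor $d$ of $n$ with $m/32\le d\le m$: take $d=q$ if $a\le 5$, and $d=2^{a-5}q=m/32$ if $a>5$ (here one uses $2^{a-5}q\mid n$, which follows from $2^aq\mid 2^5n$). Put $\sigma'=1/d\in\N^{-1}$; then $(\sigma')^{-1}\mid\delta^{-1}$, $\delta/\sigma'\in\N^{-1}$, and $\sigma\le\sigma'\le 32\sigma$. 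By the monotonicity built into $\mathcal{D}_k$ this yields $\mathcal{D}_k(\sigma')\le\mathcal{D}_k(\sigma)$ and, since $\delta/\sigma'\ge\delta/(32\sigma)$, also $\mathcal{D}_k(\delta/\sigma')\le\mathcal{D}_k(\delta/32\sigma)$. So it suffices to prove $\mathcal{D}_k(\delta)\le\mathcal{D}_k(\sigma')\,\mathcal{D}_k(\delta/\sigma')$.

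\emph{Step 2: two-scale decoupling.} Let $(f_J)_{J\in\mathcal{P}(\delta)}$ with $\mathrm{supp}(\hat f_J)\subseteq\mathcal{U}_J$. Since $(\sigma')^{-1}\mid\delta^{-1}$, the partition $\mathcal{P}(\delta)$ refines $\mathcal{P}(\sigma')$; for $I\in\mathcal{P}(\sigma')$ set $f_I=\sum_{J\in\mathcal{P}(I,\delta)}f_J$. Using the enlargement factor $4$ in the definition of the parallelepipeds one checks the containment $\bigcup_{J\in\mathcal{P}(I,\delta)}\mathcal{U}_J\subseteq\mathcal{U}_I$, so $\mathrm{supp}(\hat f_I)\subseteq\mathcal{U}_I$ and the linear decoupling inequality at scale $\sigma'$ applies:
\[
\Big\|\sum_{J\in\mathcal{P}(\delta)}f_J\Big\|_{L^{p_k}(\R^k)}=\Big\|\sum_{I\in\mathcal{P}(\sigma')}f_I\Big\|_{L^{p_k}(\R^k)}\le\mathcal{D}_k(\sigma')\Big(\sum_{I\in\mathcal{P}(\sigma')}\|f_I\|_{L^{p_k}}^2\Big)^{1/2}.
\]
Now apply affine rescaling to each $I\in\mathcal{P}(\sigma')$: the affine map sending $I$ to $[0,1]$ carries $\mathcal{P}(I,\delta)$ to $\mathcal{P}(\delta/\sigma')$ and preserves the parallelepiped structure, so $\|f_I\|_{L^{p_k}}\le\mathcal{D}_k(\delta/\sigma')\big(\sum_{J\in\mathcal{P}(I,\delta)}\|f_J\|_{L^{p_k}}^2\big)^{1/2}$. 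Substituting this into the display and using $\sum_{I\in\mathcal{P}(\sigma')}\sum_{J\in\mathcal{P}(I,\delta)}=\sum_{J\in\mathcal{P}(\delta)}$ gives the bound claimed at the end of Step~1, and with Step~1 the lemma follows.

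\emph{Main obstacle.} The only genuinely delicate point is Step~1: one must produce an intermediate scale $\sigma'\in\N^{-1}$ that simultaneously refines $\delta$ and lies within a bounded factor of $\sigma$, and this is precisely what the constant $32$ encodes. A minor related subtlety is that the affine rescaling lemma as quoted is stated for dyadic caps; since we cannot in general take $\sigma'$ dyadic (e.g.\ if $\delta^{-1}$ is odd), one either invokes the evident extension of that lemma to intervals of arbitrary length in $\N^{-1}$ (same affine change of variables) or, at the cost of a further bounded factor, reduces to a dyadic intermediate scale. The Fourier-support nesting $\bigcup_{J\subseteq I}\mathcal{U}_J\subseteq\mathcal{U}_I$ is routine given the factor $4$, and the remainder is Minkowski and Fubini.
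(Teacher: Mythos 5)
Your proof is correct and follows the same two-scale skeleton as the paper's (decouple at an intermediate scale, then affine-rescale inside each coarse cap), but you implement the intermediate scale differently. The paper coarsens to scale $8\sigma$ and, since $\mathcal{P}(\delta)$ need not refine $\mathcal{P}(8\sigma)$, assigns each $J\in\mathcal{P}(\delta)$ to some $\tilde J\in\mathcal{P}(8\sigma)$ with $\mathcal{U}_J\subseteq\mathcal{U}_{\tilde J}$, allowing $J$ to be a child of a neighbour of $\tilde J$; the rescaling then takes place over an interval of length at most $24\sigma\le 32\sigma$, which is where the factor $32$ enters there. You instead extract from the hypothesis $\delta/(32\sigma)\in\N^{-1}$ an exact divisor scale $\sigma'\in[\sigma,32\sigma]$ with $(\sigma')^{-1}\mid\delta^{-1}$, so that $\mathcal{P}(\delta)$ refines $\mathcal{P}(\sigma')$ and no neighbour bookkeeping is needed; the factor $32$ is then spent on the monotonicity comparisons $\mathcal{D}_k(\sigma')\le\mathcal{D}_k(\sigma)$ and $\mathcal{D}_k(\delta/\sigma')\le\mathcal{D}_k(\delta/(32\sigma))$. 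Your number-theoretic step is sound (the case split on the $2$-adic valuation of $\sigma^{-1}$ does produce a divisor of $\delta^{-1}$ in $[\sigma^{-1}/32,\sigma^{-1}]$), and your version buys a cleaner Fourier-support containment $\bigcup_{J\subseteq I}\mathcal{U}_J\subseteq\mathcal{U}_I$ (only genuine children, no caps from neighbouring intervals) at the price of that small arithmetic lemma. Your remark that affine rescaling extends verbatim to non-dyadic intervals is also needed by the paper itself, which rescales over intervals of length $8\sigma$ that are not dyadic in general; your primary suggestion (invoke the evident extension) is the right one, since the alternative of paying a further bounded factor would not yield the inequality in the stated constant-free form.
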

%

In the iteration to estimate $\mathcal{D}_{k}(\delta)$, we use monotonicity of $\mathcal{B}_{k}(\delta)$ to write
\begin{equation}
\label{eq:LinearBilinearReduction}
\mathcal{D}_{k}(\delta) \leq C_1 \log(\delta^{-1}) \mathcal{B}_{k}(\delta).
\end{equation}
The reason we do not resort to the slightly sharper argument of broad-narrow reduction, which is used by Li \cite{Li2021}, is that the unit distance separation of the intervals simplifies the forthcoming arguments, and we are losing logarithmic factors in the iteration anyway.


\medskip

We also use the following instance of uncertainty prinicple:
\begin{lemma}[{\cite[Lemma~3.3]{GuoLiYungZorinKranich2021}}]
For $p \in [1,\infty)$ and $J \subseteq [0,1]$ we have
\begin{equation*}
|g_J|^p \leq C_p  (|g_J|^p * \phi_J),
\end{equation*}
for every $g_J$ with $\text{supp} (\hat{g}_J) \subseteq C' \mathcal{U}_J$.
\end{lemma}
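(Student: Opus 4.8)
The plan is to establish this pointwise reverse-Hölder inequality by the standard argument that a function with Fourier support in a box is, up to rapidly decaying tails, constant at the dual scale, after first reducing to a fixed scale by affine rescaling.

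\emph{Step 1: reduction to $J=[0,1]$.} Since $|g_J|$ is unchanged under modulation of $g_J$, we may assume $\widehat{g_J}$ is supported in $C'(\mathcal{U}_J - \Gamma_3(c_J))$, a parallelepiped of dimensions $\sim |J|\times|J|^2\times|J|^3$ along the directions $\Gamma_3^{(i)}(c_J)$, $1\le i\le k$. Applying the anisotropic linear change of variables adapted to $J$ --- the same rescaling that underlies the affine rescaling lemma above --- we reduce $\widehat{g_J}$ to support in a fixed ball $B(0,R)$ with $R\sim C'$. Both sides of the claimed inequality transform in the same way under this change of variables, and $\phi_J$ --- the $L^1$-normalized bump adapted to the dual box $\mathcal{U}_J^o$ --- is carried to a function comparable to the standard $L^1$-normalized bump $\phi(x)\sim(1+|x|)^{-10k}$. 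It therefore suffices to prove: whenever $\widehat{h}$ is supported in $B(0,R)$, one has $|h|^p \le C_p\,(|h|^p * \phi)$ pointwise.

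\emph{Step 2: reproducing kernel and Hölder.} Fix $K\in\mathcal{S}(\R^k)$ with $\widehat{K}\equiv 1$ on $B(0,R)$ and $\widehat{K}$ supported in $B(0,2R)$. Since $\widehat{h}$ lives where $\widehat{K}\equiv 1$, we have $h = h*K$, hence $|h(x)| \le (|h|*|K|)(x)$. Writing $|K| = |K|^{1/p}\,|K|^{1/p'}$ and applying Hölder's inequality in the convolution variable gives
\[
|h(x)|^p \;\le\; \|K\|_{L^1}^{\,p-1}\,\big(|h|^p * |K|\big)(x).
\]
Because $K$ is Schwartz, $|K(y)| \lesssim_N (1+|y|)^{-N}$ for every $N$; choosing $N = 10k$ yields $|K(y)| \le C\,\phi(y)$ pointwise, and since $|h|^p \ge 0$ we conclude $|h(x)|^p \le C_p\,(|h|^p*\phi)(x)$ with $C_p = C\,\|K\|_{L^1}^{\,p-1}$. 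Undoing the rescaling of Step 1 gives the lemma.

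\emph{Expected main obstacle.} Once Step 1 is set up the argument is essentially formal; the one point needing care is to check that the rescaling transports $\phi_J$ to a function dominated by a \emph{fixed} integrable bump with constants independent of $J$. This is exactly the uniform non-degeneracy of the moment-curve frame $(\Gamma_3'(c_J),\Gamma_3''(c_J),\Gamma_3^{(3)}(c_J))$ --- equivalently, that $\mathcal{U}_J^o$ is comparable, uniformly in $J$, to the polar dual of $\mathcal{U}_J - \Gamma_3(c_J)$ --- which is the same fact that makes the affine rescaling lemma work.
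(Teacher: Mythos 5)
Your proof is correct and follows the standard argument for this uncertainty-principle lemma (which the paper only cites from Guo--Li--Yung--Zorin-Kranich rather than proving): reproduce $g_J$ by convolution with a Schwartz kernel whose Fourier transform is $1$ on $C'\mathcal{U}_J$, apply H\"older in the convolution variable, and dominate the kernel by $\phi_J$, with the anisotropic rescaling justified by the uniform non-degeneracy of the frame $(\Gamma_3'(c_J),\Gamma_3''(c_J),\Gamma_3^{(3)}(c_J))$ (whose wedge product is identically $12$). This matches the cited proof; no issues.
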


Record the following trivial bound due to Cauchy-Schwarz:
\begin{equation}
\label{eq:TrivialDecoupling}
\mathcal{D}_k(\delta) \leq \delta^{-\frac{1}{2}}.
\end{equation}

We end with an overview of constants:

\subsubsection{Overview of constants}

In the following we denote by $C_i$, $i=1,\ldots,8$, $c$ fixed (possibly very large) constants, which will be defined in the course of the argument.
\begin{itemize}

\item $C_1$ is used in the linear-to-bilinear reduction \eqref{eq:LinearBilinearReduction},
\item $C_2$, $C_3$ are used to record constants in the arguments involving lower dimensional decoupling, $c \geq 1$ denotes the exponent in the logarithmic loss for the $\ell^2 L^6$-decoupling,
\item $C_4$ depends on the $L^1$-norm of an essentially $L^1$-normalized function (see Lemma \ref{lem:HoelderInequalityI}),
\item $C_5$ is a constant, which comes up in the key iteration to lower the scale,
\item $C_6$ comes from an application of the triangle inequality to lower the scale once to $\nu$ (see \eqref{eq:TrivialDecouplingII}),
\item $N=N(\varepsilon,c)$ will later in the proof denote the number of iterations to lower the scale,
\item $C_7$, $C_8$ are absolute constants used to record intermediate estimates for $\mathcal{D}_3(\delta)$ after carrying out the decoupling iteration (see Lemmas \ref{lem:ChoiceNMomentCurve}, \ref{lem:IntermediateResultIIMomentCurve}).
\end{itemize}

\subsection{Decoupling in one and two dimensions}
\label{subsection:DecouplingLowDimensions}
In this section we argue how the improved decoupling result by Guth--Maldague--Wang extends to the family of curves presently considered. Some of the arguments are already contained in \cite[Appendix]{GuthMaldagueWang2024}. In the following we quantify the constants. Then we shall see how we can use lower dimensional decoupling in bilinear expressions.
 Their improved decoupling result is formulated for normalized curves as follows:
\begin{theorem}[{\cite[Appendix]{GuthMaldagueWang2024}}]
\label{thm:ImprovedDecouplingParaboloid}
Let $\gamma : [0,1] \to \R^2$, $\gamma(t) = (t,h(t))$ be a curve such that $h \in C^2([-1,2])$ with $h(0) = h'(0) = 0$ and $\frac{1}{2} \leq h''(t) \leq 2$. Then, there are $C,c>0$ such that for $(f_J)_{J \in \mathcal{I}_\delta}$ with $\text{supp} (\hat{f}_J) \subseteq \theta_{J,\gamma}$ we have:
\begin{equation*}
\big\| \sum_{J \in \mathcal{I}_{\delta}} f_J \big\|_{L^6(\R^2)} \leq C (\log (\delta^{-1}))^c \big( \sum_{J} \| f_J \|_{L^6(\R^2)}^2 \big)^{\frac{1}{2}}.
\end{equation*}
\end{theorem}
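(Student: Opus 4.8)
The plan is to reduce the stated result for a general curve $\gamma(t) = (t, h(t))$ with $h \in C^2$, $h(0) = h'(0) = 0$, $\frac12 \le h'' \le 2$ to the Guth--Maldague--Wang theorem for the standard parabola $g(t) = (t, t^2)$, for which the logarithmic decoupling bound $\mathcal{D}_2(\delta) \le C(\log \delta^{-1})^c$ is known. The key observation is that such a curve is a small perturbation of the parabola in the $C^2$ sense, so the parallelepipeds $\mathcal{U}_{J,\gamma}$ are comparable (up to a fixed dilation factor) to the parallelepipeds $\mathcal{U}_{J,g}$ adapted to the parabola.

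First I would make the comparability of caps precise: for an interval $J \subseteq [0,1]$ with center $c_J$ and length $\delta$, Taylor expansion of $h$ around $c_J$ together with $\frac12 \le h'' \le 2$ shows that $\mathcal{U}_{J,\gamma}$ is contained in $C\,\mathcal{U}_{J,g}$ and conversely $\mathcal{U}_{J,g} \subseteq C\,\mathcal{U}_{J,\gamma}$ for an absolute constant $C$ (the linear maps sending one frame to the other, and their inverses, are uniformly bounded because $h'' \sim 1$ and $h'$ ranges over a bounded set). Hence if $\operatorname{supp}(\hat f_J) \subseteq \mathcal{U}_{J,\gamma}$ then $\operatorname{supp}(\hat f_J) \subseteq C\,\mathcal{U}_{J,g}$; after an innocuous thickening of the caps (which costs only a constant in the decoupling constant, via the standard argument of covering a dilated cap by boundedly many translates of caps at a comparable scale and using the uncertainty principle / finite overlap), one is reduced to decoupling into standard parabola caps.

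Next I would invoke Theorem \ref{thm:ImprovedDecouplingParaboloid} itself --- but stated for the parabola $g(t) = (t,t^2)$, which is the case $h(t) = t^2$ --- i.e.\ the Guth--Maldague--Wang bound $\mathcal{D}_2(\delta) \le C(\log \delta^{-1})^c$. Applying this to the family $(f_J)$ regarded as having Fourier support in the enlarged standard caps gives
\begin{equation*}
\big\| \sum_{J \in \mathcal{P}_\delta} f_J \big\|_{L^6(\R^2)} \le C(\log \delta^{-1})^c \big( \sum_J \| f_J \|_{L^6(\R^2)}^2 \big)^{1/2},
\end{equation*}
which is exactly the claim. Alternatively, rather than quoting the parabola result as a black box, one could redo the high-low argument of \cite{GuthMaldagueWang2020} directly for $\gamma$: the only places the parabola is used there are (i) that caps at scale $\delta$ have the correct dimensions $\delta \times \delta^2$, which holds here because $h'' \sim 1$, and (ii) affine rescaling / parabolic rescaling, which for a general $\gamma$ with $h'' \sim 1$ holds in the approximate form that rescaling a subinterval produces a curve in the same class (with slightly worse but uniformly controlled constants), so the induction closes with the same exponent $c$.

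The main obstacle --- though it is a technical rather than a conceptual one --- is handling the rescaling step carefully in the second approach: unlike the exact parabola, a general $\gamma$ in this class is not exactly invariant under the natural affine rescaling, so one must check that the rescaled curve still satisfies $h'' \sim 1$ with the same constants (it does, since the second derivative is scale-invariant under parabolic rescaling and only the lower-order terms shift, which can be absorbed by a translation keeping us in the normalized class). If instead one takes the black-box reduction to the standard parabola, the main point to get right is the bookkeeping in the cap-comparison and thickening step, ensuring that enlarging $\mathcal{U}_{J,\gamma}$ to a constant multiple of $\mathcal{U}_{J,g}$ and then decomposing back into genuine caps costs only an absolute multiplicative constant and not a power of $\log \delta^{-1}$; this is routine and is exactly the kind of argument indicated as "already contained in \cite[Appendix]{GuthMaldagueWang2020}."
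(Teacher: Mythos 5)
First, a point of orientation: the paper does not prove Theorem \ref{thm:ImprovedDecouplingParaboloid} at all --- it is imported as a black box from the appendix of \cite{GuthMaldagueWang2020}. The hypotheses $h(0)=h'(0)=0$, $\frac{1}{2}\le h''\le 2$ describe precisely the class of curves that is closed under parabolic rescaling, which is why Guth--Maldague--Wang run their high--low induction on this whole class rather than on the single parabola. So the question is whether your argument would actually establish the statement, and your primary route would not. The claim that $\mathcal{U}_{J,\gamma}\subseteq C\,\mathcal{U}_{J,g}$ for the standard parabola $g(t)=(t,t^2)$ and an absolute constant $C$ is false: these parallelepipeds are centered at $\gamma(c_J)=(c_J,h(c_J))$ and $g(c_J)=(c_J,c_J^2)$ respectively, and already for $h(t)=t^2/2$ (which satisfies all the hypotheses) the centers differ by $c_J^2/2$, a quantity of order $1$, while the caps have dimensions $\delta\times\delta^2$. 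You have conflated ``the frames $(\gamma'(c_J),\gamma''(c_J))$ and $(g'(c_J),g''(c_J))$ are related by uniformly bounded linear maps'' (true) with ``the parallelepipeds are comparable as subsets of $\R^2$'' (false, because they sit at different locations). Globally, the union of the $\gamma$-caps is a $\delta^2$-neighbourhood of the curve $\gamma$, the union of the $g$-caps is a $\delta^2$-neighbourhood of the parabola, and for generic $h$ in the class these sets are disjoint away from $t=0$; no single affine map carries one curve to the other when $h''$ is non-constant. Hence one cannot ``regard'' the $f_J$ as having Fourier support in enlarged standard-parabola caps, and the black-box application of the parabola theorem collapses. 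Decoupling constrains the actual position of the Fourier supports, not merely the shape of the caps.

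Your alternative route --- rerunning the high--low argument of \cite{GuthMaldagueWang2020} for the whole class, using that the class is exactly preserved under parabolic rescaling (after rescaling $[a,a+\sigma]$ to $[0,1]$ and removing the affine part one gets $\tilde{h}''(s)=h''(a+\sigma s)\in[\frac{1}{2},2]$) --- is the correct strategy and is in substance what the cited appendix does; but as written it is a one-sentence sketch, and it is the only viable part of the proposal. Note also that the paper's own work in this section, Proposition \ref{prop:StabilityImprovedDecoupling}, goes in the opposite direction: it reduces a \emph{general} curve with torsion to the normalized class of Theorem \ref{thm:ImprovedDecouplingParaboloid} by finite decomposition, rigid motions, reparametrization and anisotropic dilation --- i.e.\ by landing exactly in the class to which the quoted theorem applies, not by comparing caps of two genuinely different curves within that class.
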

In the following we want to argue that the result extends to more general curves $\gamma(t) = (\gamma_1(t),\gamma_2(t)) \in C^5$ with 
\begin{equation}
\label{eq:TorsionGamma}
\| \gamma \|_{C^5} \leq D_3 < \infty \text{ and } 0< D_1 \leq | \gamma'(t) \wedge \gamma''(t) | \leq D_2 < \infty.
\end{equation}
\begin{proposition}[Stability~of~improved~decoupling]
\label{prop:StabilityImprovedDecoupling}
Suppose $\gamma \in C^5$ satisfies \eqref{eq:TorsionGamma}, and let $(f_J)_{J \in \mathcal{I}_\delta(I)}$ with $\text{supp} (\hat{f}_J) \subseteq C' \theta_{J,\gamma}$. Then, there is $C(\underline{D},C')$ such that
\begin{equation}
\label{eq:StableDecoupling}
\big\| \sum_J f_J \big\|_{L^6(\R^2)} \leq C (\log(\delta^{-1}))^c \big( \sum_J \| f_J \|_{L^6}^2 \big)^{\frac{1}{2}}.
\end{equation}
\end{proposition}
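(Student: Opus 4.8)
The plan is to deduce Proposition~\ref{prop:StabilityImprovedDecoupling} from Theorem~\ref{thm:ImprovedDecouplingParaboloid} by an affine rescaling argument, along the lines indicated in the appendix of \cite{GuthMaldagueWang2020}. First I would extend $\gamma$ to a $C^5$ curve on $[-1,2]$ for which \eqref{eq:TorsionGamma} still holds with $D_1,D_2,D_3$ replaced by $D_1/2,2D_2,2D_3$; this is possible by continuing past each endpoint with a degree-$5$ Taylor polynomial and invoking continuity of $\gamma'\wedge\gamma''$. Then I would fix a length $\rho=\rho(\underline{D})\in(0,1)$, to be pinned down below, and partition $I$ into $M=O_{\underline{D}}(1)$ intervals $I_1,\dots,I_M$ of length $\rho$. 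It suffices to prove \eqref{eq:StableDecoupling} for each $\sum_{J\subseteq I_m}f_J$ separately: summing over $m$ by the triangle inequality and then Cauchy--Schwarz in $m$, using $M=O_{\underline{D}}(1)$, recombines the pieces at the cost of an $O_{\underline{D}}(1)$ factor.

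Fix $m$ and write $I_m=[a,a+\rho]$. I would set $\eta(s)=\gamma(a+\rho s)$ for $s$ in a neighbourhood of $[0,1]$. Since \eqref{eq:TorsionGamma} forces $\gamma'\neq 0$, after translating by $-\gamma(a)$ and applying a linear map $L_m$ normalising the tangent direction and the curvature at $s=0$, the image curve can be written as a graph $u\mapsto(u,h(u))$ with $h(0)=h'(0)=0$ and $h''(0)=1$; the entries of $L_m$ and the $C^3$-norm of $h$ are bounded in terms of $\underline{D}$ alone. Choosing $\rho=\rho(\underline{D})$ small enough that $|h''-1|\leq 1/2$ on the range of $u$ arising from a neighbourhood of $[0,1]$ (possible since $\|h'''\|_\infty\lesssim_{\underline{D}}1$ and the reparametrisation $u=u(s)$ has derivatives comparable to $1$), the curve $t\mapsto(t,h(t))$ falls under Theorem~\ref{thm:ImprovedDecouplingParaboloid}, applied at scale $\delta/\rho$.

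It then remains to transfer the conclusion back to $\gamma|_{I_m}$ via three elementary facts. (i) $\ell^2 L^6$-decoupling is invariant under invertible affine changes of variables in physical space: composing all functions with the adjoint of the relevant map multiplies both sides of the inequality by the same power of its Jacobian, while the affine map sends the caps $\mathcal{U}_{\cdot,\gamma}$ to the caps of the image curve. (ii) The diffeomorphism $u=u(s)$ relating $\eta$ to its graph form has all derivatives bounded above and below in terms of $\underline{D}$; since a cap $\mathcal{U}_{J,\beta}$ is determined up to bounded distortion by $|J|$ and the flag $\big(\beta'(c_J),\beta''(c_J)\big)$, such a diffeomorphism distorts caps only by an $O_{\underline{D}}(1)$ factor. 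Consequently the $f_J$, $J\subseteq I_m$, are carried to functions with Fourier support in $C''\mathcal{U}_{J',h}$, for corresponding intervals $J'$ of length $\sim|J|/\rho$ and a constant $C''=C''(\underline{D},C')$ that also absorbs the enlargement $C'$ in the hypothesis. (iii) A bounded enlargement of the caps is harmless: $C''\mathcal{U}_{J',h}$ at scale $\delta/\rho$ lies in a single cap $\mathcal{U}_{\widetilde{J},h}$ with $|\widetilde{J}|\sim C''\delta/\rho$, so one applies Theorem~\ref{thm:ImprovedDecouplingParaboloid} at scale $\sim C''\delta/\rho$, groups the transported functions according to which $\widetilde{J}$ contains $J'$, and pays only an extra $O_{C''}(1)$ factor together with the harmless replacement of $\log((\delta/\rho)^{-1})$ by $\log((C''\delta/\rho)^{-1})\lesssim_{\underline{D},C'}\log(\delta^{-1})$. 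Chaining (i)--(iii) yields \eqref{eq:StableDecoupling} on each $I_m$, and the reduction of the first paragraph completes the proof.

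The main obstacle is bookkeeping rather than anything conceptual: one must check carefully that the affine normalisation of $\gamma|_{I_m}$ — the entries of $L_m$, the choice of $\rho$, and the distortion of caps under reparametrisation — can be carried out with all constants depending only on $\underline{D}$, and that each of the elementary reductions (affine invariance, cap enlargement, recombining the $M$ pieces) is compatible with the $\ell^2$-form of the inequality while multiplying the logarithmic factor only by constants depending on $\underline{D}$ and $C'$.
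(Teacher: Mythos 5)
Your proposal is correct and follows essentially the same route as the paper's proof: a finite decomposition into $O_{\underline{D}}(1)$ pieces, affine/anisotropic normalisation of each piece to a graph $(t,h(t))$ with $h''$ pinned near $1$ by shrinking the subinterval, bilipschitz control of the caps under the reparametrisation, and treatment of the enlargement $C'$ by decoupling at the coarser scale $\sim (C')^2\delta$ followed by Cauchy--Schwarz over the $O((C')^2)$ subintervals. The only cosmetic difference is that you perform the normalisation in one subdivision step with a single linear map $L_m$, whereas the paper first passes to graph form via rotation and the implicit function theorem and then does a second subdivision with an anisotropic dilation and a Taylor-remainder estimate.
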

\begin{proof}
In the first step we reduce the curves $\gamma$ to $(t,h(t))$ by finite decomposition, rotation, and translation, which only depends on $\underline{D}$:
For any point $\gamma(t_*)$ we can obtain by rotation and translation that $\gamma(t_*) = 0$, $\dot{\gamma}(t_*) = (c,0)$ for some $c>0$, and $\ddot{\gamma}(t_*) > 0$. By the implicit function theorem we obtain a reparametrization $t=g(s)$ such that $\gamma_1(g(s)) = s$. The interval on which the reparametrization exists depends on $c$ and $\| \gamma \|_{C^2}$. $c$ is bounded from above by $\mathcal{D}_3$ and from below by using the torsion:
\begin{equation*}
\begin{vmatrix}
c & \ddot{\gamma}_1(t_*) \\
0 & \ddot{\gamma}_2(t_*)
\end{vmatrix}
= c |\ddot{\gamma}_2(t_*)| \geq D_1 \Rightarrow c \geq \frac{D_1}{D_3}.
\end{equation*}
This means we find finitely many curves $\tilde{\gamma}(s) = (s,h(s))$ with $h(0) = h'(0) = 0$ and $0<D_1' \leq h''(s) \leq D_2' < \infty$ with $D_i' = D_i'(\underline{D})$. We can compare the rectangles $\tilde{\gamma}$ and $\gamma$ by noting that from $\tilde{\gamma}(s) = \gamma(g(s))$ follows:
\begin{equation*}
\dot{\tilde{\gamma}}(s) = \dot{\gamma}(g(s)) g'(s), \quad \ddot{\tilde{\gamma}}(s) = \ddot{\gamma}(g(s)) (g'(s))^2 + \dot{\gamma}(g(s)) g''(s).
\end{equation*}
The bilipschitz comparability of rectangles follows then from $g'(s) \sim_{\underline{D}} 1$ and $|g''(s)| \leq \kappa(\underline{D})$. In $s$ parametrization, the rectangles $C' \theta_{J,\gamma}$ become centered at $\gamma(t_J) = \tilde{\gamma}(s_J)$ and can be contained in rectangles of length $C'' \delta \times C'' \delta^2$ in the directions $\dot{\tilde{\gamma}}(s_J)$, $\ddot{\tilde{\gamma}}(s_J)$. For this reason we observe $\text{supp} (\hat{f}_J) \subseteq \mathcal{C}'' \theta_{J,\tilde{\gamma}}$.

In the following $\gamma(t) = (t,h(t))$, and we turn to normalization of $h$. We subdivide $[0,1]$ into intervals $I_s$ of length $s$. A Taylor expansion of $\gamma$ at the center $t_c$ gives
\begin{equation*}
\gamma(t) = \gamma(t_c) + \gamma'(t_c) (t-t_c) + \gamma''(t_c) \frac{(t-t_c)^2}{2} + O((t-t_c)^3).
\end{equation*}
Since $|\dot{\gamma}(t_c) \wedge \ddot{\gamma}(t_c)| = |h''(t_c)| \neq 0$, there is an anisotropic dilation $\underline{d}=\text{diag}(d_1,d_2)$ such that after translation
\begin{equation*}
\tilde{\gamma}(t) = t e_1+ \frac{t^2}{2} e_2 + G(t) t^3 e_2.
\end{equation*}
In the above display $e_i$ denote the unit vectors such that $\gamma(t) = \gamma_1(t) e_1 + \gamma_2(t) e_2 = (\gamma_1(t),\gamma_2(t))$.
The representation $G(t) t^3 e_2$ with $G \in C^2$ for the third order remainder term in the Taylor expansion (after dilation) follows from the integral representation of the remainder:
\begin{equation*}
R_3(t) = \int_0^t \frac{\gamma^{(3)}(s)}{6} (t-s)^3 ds.
\end{equation*}
We obtain
\begin{equation*}
G(t) = \int_0^t \frac{\gamma^{(3)}(s)}{6} (1-\frac{s}{t})^3 ds = t \int_0^1 \frac{\gamma^{(3)}(ts')}{6} (1-s')^3 ds'
\end{equation*}
and for $\gamma \in C^5$ we find $G \in C^2$ and $\| G \|_{C^2} \leq \kappa(\underline{D})$. Moreover, 
\begin{equation*}
\tilde{\gamma}''(t) = e_2 + (G'(t) t^3 + 3 G(t) t^2)' e_2 = e_2 + (G''(t) t^3 + 6 t^2 G'(t) + 6 G(t) t)e_2.
\end{equation*}
Clearly, $|G''(t) t^3 + 6 t^2 G'(t) + 6 G(t) t| = O_{\underline{D}}(t)$ and choosing $s$ small enough only depending on $\underline{D}$, we finish the decomposition into curves of the kind $\gamma(t) = (t,h(t))$ with $h(0) = h'(0) = 0$ and $\frac{1}{2} \leq h''(t) \leq 2$ up to rigid motions and anisotropic dilations controlled by $\underline{D}$. Now we consider the decoupling of $(t,h(t))$ with $\text{supp} \hat{f}_J \subseteq C \mathcal{U}_{J,\gamma}$ and shall prove that
\begin{equation*}
\big\| \sum_{J \in \mathcal{I}(\delta)} f_J \big\|_{L^6(\R^2)} \leq \tilde{C}(C,C') (\log(\delta^{-1}))^c \big( \sum_J \| f_J \|_{L^6(\R^2)}^2 \big)^{\frac{1}{2}}
\end{equation*}
with $C$ like in Theorem \ref{thm:ImprovedDecouplingParaboloid}. First, we observe that Theorem \ref{thm:ImprovedDecouplingParaboloid} applies with $\tilde{C} = C$ for $C' \leq 1$. We turn to $C' \geq 1$: The minor technical issue is that the blocks $\theta_{J,\gamma}$ are overlapping more often than in the original collection. We observe that these blocks are in the $\tilde{\delta}$-neighbourhood for $\tilde{\delta} = 10^{10} (C')^2 \delta$. So we can apply decoupling for $\tilde{\delta}$, but the decomposition into $\theta_{\tilde{J},\gamma}$ for $\tilde{J} \in \mathcal{I}_{\tilde{\delta}}$ is too coarse. For $\tilde{J}$ we choose a collection $\mathcal{J}$ of intervals $J \subseteq \tilde{J}$ such that $\sum_{\tilde{J}} f_{\tilde{J}} = \sum_{J} f_J$ and find:
\begin{equation*}
\big\| \sum_{\tilde{J} \in \mathcal{I}(\tilde{\delta})} f_{\tilde{J}} \big\|_{L^6(\R^2)} \leq C \log((10^{10} (C')^2 \delta)^{-1})^c \big( \sum_{\tilde{J} \in \mathcal{I}(\tilde{\delta})} \| f_{\tilde{J}} \|_{L^6(\R^2)}^2 \big)^{\frac{1}{2}}. 
\end{equation*}
Since $\# \{ J \subseteq \tilde{J} \} = \mathcal{O}((C')^2)$, an application of Cauchy-Schwarz finishes the proof:
\begin{equation*}
\big\| \sum_{J \in \mathcal{U}_J} f_J \big\|_{L^6(\R^2)} \leq \tilde{C}(C,C') (\log(\delta^{-1}))^c \big( \sum_{J} \| f_J \|^2_{L^6(\R^2)} \big)^{\frac{1}{2}}.
\end{equation*}
\end{proof}
We summarize uniform decoupling inequalities for families of curves: Suppose $\ell \in \{1,2\}$ and $\gamma:[0,1] \to \R^\ell$ is a curve such that
\begin{equation}
\label{eq:AssumptionTorsion}
\| \gamma \|_{C^5} \leq D_3 \text{ and for any } t \in [0,1]: \, D_1 \leq \big| \bigwedge_{i=1}^{\ell} \partial^i \gamma(t) \big| \leq D_2.
\end{equation}

\begin{proposition}[Decoupling~for~curves~with~torsion~for~$d=1,2$]
\label{prop:UniformDecoupling}
Suppose that $\ell \in \{1,2\}$, and $\gamma:[0,1] \to \R^\ell$ is a curve satisfying \eqref{eq:AssumptionTorsion}. Then, for any $C>0$, any $\delta \in (0,1)$, and any tuple of functions $(f_J)_{J \in \mathcal{I}(\delta)}$ with $\text{supp}(\hat{f}_J) \subseteq C \theta_{J,\gamma}$ for any $J$, the following inequality holds:
\begin{align}
\big\| \sum_{J \in \mathcal{I}(\delta)} f_J \big\|_{L^{p_\ell}(\R^{\ell})} &\leq C_{\ell}'(C,\underline{D},\delta) \big( \sum_{J \in \mathcal{I}(\delta)} \| f_J \|^2_{L^{p_\ell}(\R^{\ell})} \big)^{1/2} 
\end{align}
with
\begin{equation*}
C_{\ell}'(C,\underline{D},\delta) = 
\begin{cases}
C'(C,\underline{D}), &\quad \ell = 1, \\
C'(C,\underline{D}) (\log(\delta^{-1}))^c, &\quad \ell =2.
\end{cases}
\end{equation*}
\end{proposition}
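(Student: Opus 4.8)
The plan is to reduce Proposition \ref{prop:UniformDecoupling} to the two cases that have already been essentially dealt with: the trivial case $\ell = 1$ and the stable improved decoupling in the plane, which is Proposition \ref{prop:StabilityImprovedDecoupling}. The only genuine content here is packaging, since the proposition merely asserts that the decoupling constants for a family of curves satisfying the uniform torsion and smoothness bounds \eqref{eq:AssumptionTorsion} can be taken to depend only on the structural data $\underline{D} = (D_1,D_2,D_3)$, the dilation factor $C$ of the Fourier support, and $\delta$ (and in dimension two incur only the logarithmic loss $(\log \delta^{-1})^c$).

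First I would dispose of $\ell = 1$. Here the moment curve is $\Gamma_1(t) = t$, $p_1 = 2$, and the slabs $\mathcal{U}_{J,\gamma}$ are intervals of length $\sim C|J|$ centered at $\gamma(c_J)$; the condition $D_1 \leq |\gamma'(t)| \leq D_2$ says the reparametrization by arclength is bilipschitz with constants depending only on $\underline{D}$, so the images $\gamma(J)$ have bounded overlap (at most $O(C D_2/D_1)$ intervals share any point). Plancherel's theorem together with this bounded-overlap property gives $\|\sum_J f_J\|_{L^2}^2 = \|\sum_J \hat f_J\|_{L^2}^2 \lesssim_{C,\underline{D}} \sum_J \|\hat f_J\|_{L^2}^2 = \sum_J \|f_J\|_{L^2}^2$, i.e. $C_1'(C,\underline{D}) = C'(C,\underline{D})$ with no $\delta$-dependence at all. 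This is the content of the remark in the introduction that $\mathcal{D}_1(\delta) \leq 10$, adapted to the perturbed setting.

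For $\ell = 2$ I would simply invoke Proposition \ref{prop:StabilityImprovedDecoupling}. One must check that the hypothesis $\gamma \in C^5$ with $\|\gamma\|_{C^5} \leq D_3$ and $0 < D_1 \leq |\gamma'(t) \wedge \gamma''(t)| \leq D_2$ appearing there is exactly \eqref{eq:AssumptionTorsion} with $\ell = 2$ (the wedge $\bigwedge_{i=1}^2 \partial^i \gamma(t)$ is $\gamma'(t) \wedge \gamma''(t)$), and that the Fourier support condition $\mathrm{supp}(\hat f_J) \subseteq C \mathcal{U}_{J,\gamma}$ matches the $C' \mathcal{U}_{J,\gamma}$ there. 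Proposition \ref{prop:StabilityImprovedDecoupling} then yields precisely $\|\sum_J f_J\|_{L^6} \leq C(\underline{D},C)(\log \delta^{-1})^c (\sum_J \|f_J\|_{L^6}^2)^{1/2}$, which is the claimed bound with $C'(C,\underline{D})(\log \delta^{-1})^c$. For small $\delta^{-1}$, say $\delta^{-1} \leq 2$, where $\log \delta^{-1}$ could degenerate, I would absorb the finitely many scales into the constant using the trivial bound \eqref{eq:TrivialDecoupling} (or rather its perturbed analogue, which again follows from Cauchy–Schwarz), and for $\delta \geq 1$ there is nothing to prove since there is only one interval.

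There is no real obstacle: the proof is a two-line case distinction citing Proposition \ref{prop:StabilityImprovedDecoupling} and Plancherel. The only point requiring a moment's care is bookkeeping the dependence of the constants — making sure that the reductions in the proof of Proposition \ref{prop:StabilityImprovedDecoupling} (finite decomposition, rotation, translation, anisotropic dilation, and the passage from $C' \geq 1$ to $\tilde\delta = 10^{10}(C')^2\delta$) all depend only on $\underline{D}$ and $C$, which has already been verified there — and noting that the exponent $c$ is the universal one from Theorem \ref{thm:ImprovedDecouplingParaboloid}, unchanged by the perturbation. Hence one may unify the two cases into the single statement with $C_\ell'(C,\underline{D},\delta)$ as displayed.
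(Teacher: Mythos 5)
Your proposal is correct and matches the paper's proof, which is literally the one-line case distinction you describe: the $\ell=1$ case is dismissed as obvious (your Plancherel/bounded-overlap argument is the standard justification) and the $\ell=2$ case is an immediate citation of Proposition \ref{prop:StabilityImprovedDecoupling}. You supply more detail on constant bookkeeping than the paper does, but the route is identical.
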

\begin{proof}
For $\ell =1$ this is obvious, for $\ell = 2$ this is Proposition \ref{prop:StabilityImprovedDecoupling}.
\end{proof}
\begin{corollary}
\label{cor:LocalImprovedDecoupling}
Under the assumptions of Proposition \ref{prop:UniformDecoupling}, for every ball $B \subseteq \R^{\ell}$ of radius $\delta^{-\ell}$, we have
\begin{align}
\dashint_B \big| \sum_{J \in \mathcal{I}(\delta)} f_J \big|^{p_\ell} \leq C_{\ell}'(C,\underline{D},\delta) \big( \sum_{J \in \mathcal{I}(\delta)} \| f_J \|^2_{L^{p_{\ell}}(\phi_B)} \big)^{p_{\ell}/2}.
\end{align}
In the above display $\phi_B(x) = |B|^{-1} (1+ \delta^{\ell} \text{dist}(x,B))^{-30}$ denotes an $L^1$-normalized bump function adapted to $B$, and $\dashint_{B}$ denotes the average integral.
\end{corollary}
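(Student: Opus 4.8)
The plan is to deduce the local estimate from the global decoupling inequality of Proposition~\ref{prop:UniformDecoupling} by multiplying by a band-limited weight adapted to $B$ and keeping track of the Fourier supports.

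First I would fix, once and for all, a nonnegative Schwartz function $\eta$ on $\R^\ell$ with $\eta \geq 1$ on the unit ball, $|\eta(x)| \lesssim (1+|x|)^{-30}$, and $\widehat{\eta}$ supported in a ball $B(0,c_0)$ for a small absolute constant $c_0$; such $\eta$ are standard (one may take a large even power of $\mathbf{1}_{B(0,\rho)}^{\vee}$ for $\rho$ small and normalize). Writing $x_B$ for the center of the ball $B$ of radius $\delta^{-\ell}$, I set $\eta_B(x) = \eta(\delta^{\ell}(x-x_B))$, so that $\eta_B \geq 1$ on $B$, $|\eta_B(x)| \lesssim (1+\delta^{\ell}\operatorname{dist}(x,B))^{-30}$, and $\widehat{\eta_B}$ is supported in a ball of radius $c_0\delta^{\ell}$. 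The key point is that every side length of $\mathcal{U}_{J,\gamma}$ is at least $4|J|^{\ell} = 4\delta^{\ell}$, so that convolving $\widehat{f_J}$ (supported in $C\mathcal{U}_{J,\gamma}$) with $\widehat{\eta_B}$ keeps the support inside $C'\mathcal{U}_{J,\gamma}$ with $C' = C + O(c_0)$, an enlargement by an absolute constant independent of $\delta$. Hence the family $(f_J\eta_B)_{J\in\mathcal{P}(\delta)}$ satisfies the hypotheses of Proposition~\ref{prop:UniformDecoupling} with $C$ replaced by $C'$, giving
\begin{equation*}
\big\| \sum_{J\in\mathcal{P}(\delta)} f_J\eta_B \big\|_{L^{p_\ell}(\R^\ell)} \leq C_\ell'(C',\underline{D},\delta)\Big( \sum_{J\in\mathcal{P}(\delta)} \| f_J\eta_B\|^2_{L^{p_\ell}(\R^\ell)} \Big)^{1/2}.
\end{equation*}

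Next I would compare both sides with the local quantities. On the left, $\eta_B\geq 1$ on $B$ gives $\int_B |\sum_J f_J|^{p_\ell} \leq \int_{\R^\ell}|\sum_J f_J\eta_B|^{p_\ell}$. On the right, since $p_\ell\in\{2,6\}$ the decay of $\eta_B$ yields $|\eta_B(x)|^{p_\ell} \lesssim (1+\delta^\ell\operatorname{dist}(x,B))^{-30} = |B|\,\phi_B(x)$, hence $\|f_J\eta_B\|_{L^{p_\ell}(\R^\ell)}^{p_\ell} \lesssim |B|\,\|f_J\|_{L^{p_\ell}(\phi_B)}^{p_\ell}$ and so $\|f_J\eta_B\|_{L^{p_\ell}(\R^\ell)}^2 \lesssim |B|^{2/p_\ell}\|f_J\|_{L^{p_\ell}(\phi_B)}^2$. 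Raising the decoupling inequality for $(f_J\eta_B)_J$ to the power $p_\ell$, inserting these two bounds, and dividing by $|B|$ then gives the claimed estimate, with constant $C_\ell'(C',\underline{D},\delta)^{p_\ell}$, which is of the same shape as $C_\ell'(C,\underline{D},\delta)$ (after renaming $C'$ and, if one keeps the notation, enlarging the exponent $c$).

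The argument is routine; the only step needing a little care is the Fourier-support bookkeeping, i.e.\ checking that the dual scale $\delta^{\ell}$ of $B$ is comparable to the shortest side $4\delta^{\ell}$ of $\mathcal{U}_{J,\gamma}$, so that multiplication by $\eta_B$ only dilates the frequency blocks by an absolute factor. One could equally bypass the explicit weight by combining the global inequality with the uncertainty-principle lemma $|g_J|^{p}\leq C_{p}(|g_J|^{p} * \phi_J)$ recalled above, but the band-limited cutoff is the most direct route to the localization to $B$.
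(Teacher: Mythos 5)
Your argument is correct and is essentially the paper's own proof: both multiply by a band‑limited bump adapted to $B$ (with Fourier support at scale $\delta^{\ell}$, matching the shortest side of $\mathcal{U}_{J,\gamma}$), apply the global decoupling of Proposition \ref{prop:UniformDecoupling} to $(f_J\eta_B)_J$, and compare $|\eta_B|^{p_\ell}$ with $|B|\phi_B$. Your version is if anything slightly more careful about the Fourier-support bookkeeping and about the constant appearing to the power $p_\ell$ (absorbed into the exponent $c$), which the paper leaves implicit.
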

\begin{proof}
We apply Proposition \ref{prop:UniformDecoupling} to functions $f_J \psi_B$, where $\psi_B$ is a Schwartz function such that $|\psi_B| \gtrsim 1$ on $B$ and $\text{supp} (\hat{\psi}_B) \subseteq B(0,\delta^{\ell})$. For $B$ centered at the origin, it suffices to consider $\psi_B(x)= \delta^{-\ell^{\ell}} \int_{\R^3} e^{ix.\xi} a(\delta^{-\ell} \xi) d\xi$ with $a \in C^\infty_c(\R^3)$ a radially decreasing function satisfying $a(0) = 1$, $a \geq 0$ and having support in $B(0,c)$ for $c$ small enough. The general case follows from translation. Then
\begin{equation*}
\begin{split}
\dashint_{B} \big| \sum_J f_J \big|^{p_\ell} &\lesssim \int_{\R^3} \big| \sum_J f_J \frac{\psi_B}{|B|^{1/p_{\ell}}} \big|^{p_{\ell}} \lesssim \int_{\R^3} \big| \sum_J f_J \frac{\psi_B}{|B|^{1/p_{\ell}}} \big|^{p_{\ell}} \\
&\leq C'(C,\underline{D}) C_{\ell}(\delta) \big( \sum_J \| \frac{f_J \psi_B}{|B|^{1/p_{\ell}}} \|^2_{L^{p_{\ell}}} \big)^{p_{\ell}/2}
\end{split}
\end{equation*}
with
\begin{equation*}
C_{\ell}(\delta) = 
\begin{cases}
1, &\quad \ell = 1, \\
(\log(\delta^{-1}))^c, &\quad \ell = 2.
\end{cases}
\end{equation*}

To conclude the proof, we need to argue that
\begin{equation*}
\sup_x \frac{|\psi_B(x)|^{p_{\ell}}}{|B| \phi_B(x)} \lesssim 1.
\end{equation*}
This follows from the rapid decay of $\psi_B$ away from $B$ (which is by our definition of $\psi_B$ faster than any polynomial). Let $R = \delta^{-\ell}$ and suppose again $B$ is centered at the origin. For $\text{dist}(x,B) \leq R$ we have $|\psi_B(x)|^{p_{\ell}} \leq C$, $|B| \phi_B(x) \geq 1$. For $\text{dist}(x,B) \geq R$, which means $x \geq 2R$, we can estimate
\begin{equation*}
\psi_B(x) = \psi_1(R^{-1} x) \leq C_N (1+R^{-1} x)^{-N}.
\end{equation*}
 Therefore, $(\psi_1(R^{-1} x))^{p_{\ell}} \leq C_N (R^{-1} x)^{-N \cdot p_\ell}$ and $|B| \phi_B(x) \sim (R^{-1} x)^{-30}$. Choosing $N$ large enough yields an acceptable contribution.
\end{proof}

\begin{lemma}[Lower~degree~decoupling~(see~{\cite[Lemma~3.5.]{GuoLiYungZorinKranich2021}})]
\label{lem:LowerDegreeDecoupling}
Let $\ell \in \{ 1,2 \}$. Let $\delta \in (0,1)$ and $(f_K)_{K \in \mathcal{I}(\delta)}$ be a tuple of functions so that $\text{supp} \hat{f}_K \subseteq \theta_K$ for every $K$. If $0 \leq a \leq (3-\ell + 1)b/\ell$, then for any pair of intervals $I \in \mathcal{I}(\delta^a)$, $I' \in \mathcal{I}(\delta^b)$ with $\text{dist}(I,I') \geq 1/4$, we obtain for $\ell = 1$:
\begin{equation}
\label{eq:LowerDimensionalDecouplingl1}
\int_{\R^3} (|f_I|^2 * \phi_I) (|f_{I'}|^{10} * \phi_{I'}) \leq C_1 \sum_{J \in \mathcal{I}(I,\delta^{3b})} \int_{\R^3} (|f_J|^2 * \phi_J) (|f_{I'}|^{10} * \phi_{I'}),
\end{equation}
and for $\ell = 2$:\small
\begin{equation}
\label{eq:LowerDimensionalDecouplingl2}
\int_{\R^3} (|f_I|^6 * \phi_I) (|f_{I'}|^6 * \phi_{I'}) \leq C_2 (\log (\delta^{-b}))^c \big( \sum_{J \in \mathcal{I}(I,\delta^b)} \big( \int_{\R^3} \big( |f_J|^6 * \phi_J \big) \big( |f_{I'}|^6 * \phi_{I'} \big) \big)^{\frac{1}{3}} \big)^3.
\end{equation}
\normalsize
\end{lemma}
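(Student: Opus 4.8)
The plan is to treat the cases $\ell=1$ and $\ell=2$ by a common scheme. First, I would use the preceding uncertainty‑principle lemma to pass freely between $|f_K|^{p_\ell}$ and $|f_K|^{p_\ell}*\phi_K$ (and similarly for the higher powers of $f_{I'}$), and to replace the $I'$‑weight by a function that is essentially constant on each translate of the dual plank $\mathcal{U}_{I'}^o$ of dimensions $\sim\delta^{-b}\times\delta^{-2b}\times\delta^{-3b}$, up to rapidly decaying tails; here one uses that the Fourier transform of the $I'$‑weight is supported in an $O(1)$‑dilate of $\mathcal{U}_{I'}$ centred at the origin. After this reduction it suffices to prove a weighted decoupling of $f_I=\sum_J f_J$ on planks $\sim\mathcal{U}_{I'}^o$, and the role of the hypothesis $0\le a\le(3-\ell+1)b/\ell$ is only to ensure that the target scale ($\delta^{3b}$ when $\ell=1$, $\delta^b$ when $\ell=2$) does not exceed $|I|=\delta^a$, so that $\mathcal{P}(I,\cdot)$ is an honest refinement of $I$.

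For $\ell=1$ I would expand $|f_I|^2=\sum_{J,J'\in\mathcal{P}(I,\delta^{3b})}f_J\overline{f_{J'}}$. The cross term $f_J\overline{f_{J'}}$ has Fourier support in $\mathcal{U}_J-\mathcal{U}_{J'}$, a box of width $\lesssim\delta^{3b}$ in every direction and centred at $\Gamma_3(c_J)-\Gamma_3(c_{J'})$, whereas the Fourier transform of $|f_{I'}|^{10}*\phi_{I'}$ is supported in a box about the origin with sides $\sim\delta^{b},\delta^{2b},\delta^{3b}$ along $\Gamma_3'(c_{I'}),\Gamma_3''(c_{I'}),\Gamma_3^{(3)}(c_{I'})$. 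Writing $\tilde p=c_J-c_{I'}$ and $\tilde q=c_{J'}-c_{I'}$, the component of $\Gamma_3(c_J)-\Gamma_3(c_{J'})$ along $\Gamma_3^{(3)}(c_{I'})$ equals a fixed nonzero multiple of $(c_J-c_{J'})(\tilde p^2+\tilde p\tilde q+\tilde q^2)$. Since $\text{dist}(I,I')\ge 1/4$ forces $|\tilde p|,|\tilde q|\ge 1/4$ with the same sign, the factor $\tilde p^2+\tilde p\tilde q+\tilde q^2$ is $\gtrsim 1$, so a non‑negligible cross term requires $|c_J-c_{J'}|\lesssim\delta^{3b}$, that is, $J$ and $J'$ are neighbours. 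Hence only $O(1)$ off‑diagonal pairs per $J$ survive, and Cauchy--Schwarz together with the uncertainty principle (to restore the convolutions with $\phi_J$) yields \eqref{eq:LowerDimensionalDecouplingl1} with an absolute constant $C_1$.

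For $\ell=2$ I would reduce to lower‑dimensional decoupling for the parabola. Slicing $\R^3$ by the hyperplanes on which the last coordinate is constant (equivalently on which $\langle x,\Gamma_3^{(3)}(c_{I'})\rangle$ is constant, since $\Gamma_3^{(3)}$ is a constant vector), on each slice $\cong\R^2$ the function $f_J(\cdot,s)$ has Fourier support in the projection of $\mathcal{U}_J$ onto the first two coordinates, which is precisely the scale‑$\delta^b$ parabola block $\mathcal{V}_J$, while the sliced weight is essentially constant on translates of the two‑dimensional plank $\mathcal{V}_{I'}^o\sim\delta^{-b}\times\delta^{-2b}$. The place where $\text{dist}(I,I')\ge 1/4$ enters is that this plank is then quantitatively transverse to the parabola frame over $I$, so that the improved local $L^6$‑decoupling for the parabola of Guth--Maldague--Wang, in the form of Corollary~\ref{cor:LocalImprovedDecoupling} applied (via the stability of Proposition~\ref{prop:StabilityImprovedDecoupling}) to the affine image of the arc $I$, can be run into the arcs of $I$ of scale $\delta^b$ and contributes the factor $(\log\delta^{-b})^c$. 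Summing these local estimates over the planks tiling the slice and then integrating in the last coordinate, applying Minkowski's inequality in $L^3$ at each stage to preserve the $\big(\sum_J(\cdots)^{1/3}\big)^3$ structure, gives \eqref{eq:LowerDimensionalDecouplingl2} with constant $C_2(\log\delta^{-b})^c$.

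The step I expect to be the main obstacle, especially for $\ell=2$ with $a<b$, is reconciling the lower‑dimensional reduction with the weight: one must choose the slicing and the local‑decoupling regions so that $|f_{I'}|^{p_3-p_\ell}*\phi_{I'}$ is effectively constant at the scale on which the decoupling is performed while those regions remain large enough for that decoupling to be valid, and it is precisely here that the separation $\text{dist}(I,I')\ge 1/4$, the constraint $0\le a\le(3-\ell+1)b/\ell$, and, for the general curves of Proposition~\ref{prop:UniformDecoupling}, the torsion bounds $\underline{D}$ must all be exploited. A subsidiary but routine nuisance is controlling the non‑compactly‑supported tails of $\widehat{\phi_{I'}}$ and $\widehat{\phi_I}$ when freezing the weight and when comparing $|f_I|^{p_\ell}*\phi_I$ with $|f_I|^{p_\ell}$, which in the relevant parameter regime reduces to the nesting $\mathcal{U}_I^o\subseteq C\,\mathcal{U}_{I'}^o$.
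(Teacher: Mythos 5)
Your $\ell=1$ argument is essentially correct and amounts to the same $L^2$--orthogonality that the paper exploits: the paper fixes $\xi'\in I'$, quotients by the osculating plane $V^{2}(\xi')=\mathrm{span}(\Gamma_3'(\xi'),\Gamma_3''(\xi'))$, and runs one--dimensional (Plancherel) decoupling on the fibers $H+z$ with $H=V^{2}(\xi')^{\perp}$, freezing the weight there because $B_H(0,\delta^{-3b})\subseteq C'\mathcal{U}_{I'}^{o}$. Your direct expansion of $|f_I|^2$ and the observation that a surviving cross term forces $|\langle \Gamma_3(c_J)-\Gamma_3(c_{J'}),\,\Gamma_3'(c_{I'})\wedge\Gamma_3''(c_{I'})\rangle|=2|c_J-c_{J'}|\,|\tilde p^2+\tilde p\tilde q+\tilde q^2|\lesssim\delta^{3b}$, hence $|c_J-c_{J'}|\lesssim\delta^{3b}$ by the separation, is the same transversality input (Lemma \ref{lem:Transversality} with $\ell=1$) packaged as Fourier--support disjointness rather than as slicing; both deliver \eqref{eq:LowerDimensionalDecouplingl1} with an absolute constant. (Be careful only to name the relevant direction correctly: it is the binormal of $I'$, i.e.\ the direction dual to $\Gamma_3^{(3)}(c_{I'})$ in the frame at $c_{I'}$, not the constant vector $\Gamma_3^{(3)}$ itself; your displayed formula is the binormal component.)

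For $\ell=2$ there is a genuine gap, and it sits exactly where you flagged the ``main obstacle.'' Slicing by hyperplanes $\{x_3=\mathrm{const}\}$ makes the projected blocks the standard parabola caps $\mathcal V_J$, but it destroys the weight: the dual plank $\mathcal{U}_{I'}^{o}$ is short (length $\sim\delta^{-b}$) precisely in the tangent direction $\Gamma_3'(c_{I'})$, which has a component of size $\sim 1$ inside the horizontal plane, so $|f_{I'}|^{6}*\phi_{I'}$ is constant on your slices only on planks of dimensions $\delta^{-b}\times\delta^{-2b}$ and not on balls of radius $\delta^{-2b}$. Local $\ell^2L^6$ decoupling into $\delta^{b}$--arcs is not available on a region of width $\delta^{-b}$, so the scheme does not close as stated. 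The paper's resolution, which is the one idea missing from your proposal, is to choose the slicing adapted to $I'$ rather than to the coordinates: quotient by $V^{1}(\xi')=\mathrm{span}(\Gamma_3'(\xi'))$ and work on the affine planes $H+z$ with $H=\Gamma_3'(\xi')^{\perp}$. Then the only short direction of $\mathcal{U}_{I'}^{o}$ is orthogonal to $H$, so $B_H(0,\delta^{-2b})\subseteq C'\mathcal{U}_{I'}^{o}$ and the weight freezes on full two--dimensional balls of the radius required by Corollary \ref{cor:LocalImprovedDecoupling}; the price is that the projected arc $P\circ\Gamma_3|_I$ is no longer the standard parabola, and one invokes Lemma \ref{lem:Transversality} (this is where $\mathrm{dist}(I,I')\ge 1/4$ enters) to get $|\partial^1(P\circ\Gamma_3)\wedge\partial^2(P\circ\Gamma_3)|\gtrsim1$ and then the stability statement of Proposition \ref{prop:StabilityImprovedDecoupling} to apply the Guth--Maldague--Wang bound. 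After that, Fubini over the fibers, Minkowski's inequality (using $2\le p_\ell$), and the nestings $\mathcal{U}_I^{o}\subseteq C\,\mathcal{U}_J^{o}$, $B_H(0,\delta^{-2b})\subseteq C\,\mathcal{U}_{I'}^{o}$ give \eqref{eq:LowerDimensionalDecouplingl2} in the stated $\big(\sum_J(\cdots)^{1/3}\big)^3$ form.
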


Now we are ready to prove Lemma \ref{lem:LowerDegreeDecoupling}. We revisit the argument from \cite{GuoLiYungZorinKranich2021} to make the involved implicit constants transparent. By quantifying the decoupling constants, we can improve the $\delta^{-\varepsilon}$ bound from \cite{GuoLiYungZorinKranich2021} as claimed.
\begin{proof}[Proof~of~Lemma~\ref{lem:LowerDegreeDecoupling}]
Denote $b' = (3-\ell+1)b/\ell$ and $k = 3$.
Fix $\xi' \in I'$, let $V^{m}(\xi') = \text{span} (\partial^1 \gamma_k(\xi'), \ldots, \partial^m \gamma_k(\xi'))$ be the tangent space for $m \in \{1,2,3\}$, and let $\hat{H} = \R^k / V^{k-\ell}(\xi')$ be the quotient space. Let $\pi: \R^k \to \hat{H}$ be the projection onto $\hat{H}$. For every $\xi \in I$, we have like in the proof of Proposition \ref{prop:TransverseDecoupling}:
\begin{equation}
\label{eq:LowerDimDecouplingAssumptions}
|\partial^1 (\pi \circ \gamma_k)(\xi) \wedge_{\R} \ldots \wedge_{\R} \partial^{\ell} (\pi \circ \gamma_k)(\xi)| \gtrsim 1, \quad \pi(\theta_J) \subseteq C' \theta_{J,\pi \circ \gamma_k}.
\end{equation}
By Fubini's theorem
\begin{equation}
\label{eq:Fubini}
\int_{\R^k} \big( |f_I|^{p_{\ell}} * \phi_I \big) \big( |f_{I'}|^{p_k - p_{\ell}} * \phi_{I'} \big) 
\lesssim \int_{z \in \R^k} \big( \dashint_{B_H(z,\delta^{-b' \ell})} |f_I|^{p_\ell} * \phi_I \big) \big( |f_{I'}|^{p_k - p_{\ell}} * \phi_{I'} \big)(z),
\end{equation}
where $B_H(z,\delta^{-b' \ell})$ is the $\ell$-dimensional ball with radius $\delta^{-b' \ell}$ centered at $z$ inside the affine subspace $H+z$. 

By \eqref{eq:LowerDimDecouplingAssumptions} we can use lower dimensional decoupling with $\delta^{b'}$ in place of $\delta$ in Corollary \ref{cor:LocalImprovedDecoupling}:
\small
\begin{equation*} 
\big( \dashint_{B_H(z,\delta^{-b' \ell})} |f_I|^{p_\ell} * \phi_I \big) \leq C_{\ell}'(C,\underline{D},\delta) \int_{z'} \phi_I(z-z') \big( \sum_{J \in \mathcal{P}(I,\delta^{b'})} \| f_J \|^2_{L^{p_{\ell}}(\phi_{B_H}(z',\delta^{-b' \ell})} \big)^{p_{\ell}/2}.
\end{equation*}
\normalsize
Taking the $p_{\ell}$th root we find for \eqref{eq:Fubini}:
\begin{equation}
\label{eq:FinalAuxEstimate}
\begin{split}
\eqref{eq:Fubini}^{1/p_{\ell}} &\leq C_{\ell}'(C,\underline{D},\delta^{b'}) \big( \int_{z,z' \in \R^3} \big( |f_{I'}|^{p_k - p_{\ell}} * \phi_{I'} \big)(z) \\
&\qquad \times \phi_I(z-z') \big( \sum_{J \in \mathcal{I}(I,\delta^{b'})} \| f_J \|^2_{L^{p_{\ell}}(z'+H,\phi_{B_H}(z',\delta^{-b' \ell}))} \big)^{p_{\ell}/2} \big)^{1/p_{\ell}} \\
&\leq C_{\ell}'(C,\underline{D},\delta^{b'}) \big( \sum_{J \in \mathcal{I}(I,\delta^{b'})} \big( \int_{z,z' \in \R^3} \big( |f_{I'}|^{p_k - p_{\ell}} * \phi_{I'} \big)(z) \\
&\qquad \times \phi_I(z-z') \| f_J \|^{p_\ell}_{L^{p_{\ell}}(\phi_{B_H(z',\delta^{-b' \ell})})} \big)^{2/p_{\ell}} \big)^{\frac{1}{2}}.
\end{split}
\end{equation}
The ultimate estimate follows from Minkowski's inequality since $2 \leq p_{\ell}$. The double integral inside the brackets can be written as
\begin{equation}
\label{eq:AuxEstimateIII}
\begin{split}
&\quad \int_{\R^k} \big( |f_{I'}|^{p_k - p_{\ell}} * \phi_{I'} \big) \big( \phi_I * |f_J|^{p_\ell} *_H \phi_{B_H(0,\delta^{-b' \ell})} \big) \\
&= \int_{\R^k} (|f_{I'}|^{p_k - p_{\ell}} * \phi_I *_H \phi_{B_H(0,\delta^{-b' \ell})} ) ( |f_J|^{p_\ell} * \phi_I ) \\
&\lesssim \int_{\R^k} \big( |f_{I'}|^{p_k - p_{\ell}} * \phi_{I'} \big) \big( |f_J|^{p_{\ell}} * \phi_I \big),
\end{split}
\end{equation}
which follows again by $B_H(0,\delta^{-b' \ell}) \subseteq C \mathcal{U}^o_{I',\gamma}$. Using the uncertainty principle and $\mathcal{U}_I^o \subseteq C \mathcal{U}_J^o$, we find
\begin{equation}
\label{eq:AuxEstimateIV}
|f_J|^{p_{\ell}} * \phi_I \lesssim |f_J|^{p_\ell} * \phi_J * \phi_I \lesssim |f_J|^{p_{\ell}} * \phi_J.
\end{equation}
We merge the implicit constants with $C_\ell'(C,\underline{D})$ to complete the proof.
\end{proof}

\subsection{Proof~of~Theorem~\ref{thm:ImprovedDecoupling}}
\label{section:IterationMomentCurve}

\subsubsection{Asymmetric decoupling constant}
In the following we define asymmetric decoupling constants, which effectively allow us to lower the scale by using lower-dimensional decoupling stated in the previous section. 
We consider two intervals $I$, $I'$ of size $|I| = \delta^a$ and $|I'| = \delta^b$, $a,b \in [0,1]$, which are separated at unit distance. Following \cite{GuoLiYungZorinKranich2021}, we define bilinear decoupling constants as smallest constants, which satisfy the following:
\begin{equation*}
\int_{\R^3} (|f_I|^6 * \phi_I) (|f_{I'}|^6 * \phi_{I'}) \leq \mathcal{M}^{12}_{6,a,b}(\delta) \big( \sum_{J \in \mathcal{I}(I,\delta)} \|  f_J \|^2_{L^{12}} \big)^3 \big( \sum_{J' \in \mathcal{I}(I',\delta)} \| f_{J'} \|^2_{L^{12}} \big)^3.
\end{equation*}
Secondly, we define
\begin{equation*}
\int_{\R^3} (|f_I|^2 * \phi_I) (|f_{I'}|^{10} * \phi_{I'}) \leq \mathcal{M}^{12}_{2,a,b}(\delta) \big( \sum_{J \in \mathcal{I}(I,\delta)} \| f_J \|^2_{L^{12}} \big) \big( \sum_{J' \in \mathcal{I}(I',\delta)} \| f_{J'} \|^2_{L^{12}} \big)^{5}.
\end{equation*}
We have the following as consequence of \eqref{eq:LowerDimensionalDecouplingl1} and \eqref{eq:LowerDimensionalDecouplingl2}:
\begin{lemma}[{Lower~dimensional~decoupling}]
\label{lem:LowerDimDecouplingMomentCurve}
Let $a,b \in [0,1]$ such that $0 \leq a \leq 3b$. Then
\begin{equation}
\label{eq:LowerDimensionalDecouplingMomentCurve}
\mathcal{M}_{2,a,b}(\delta) \leq C_2 \mathcal{M}_{2,3b,b}(\delta).
\end{equation}
If $0 \leq a \leq b$, then the following estimate holds for some $c \in \N$:
\begin{equation}
\label{eq:LowerDimensionalDecouplingMomentCurveII}
\mathcal{M}_{6,a,b}(\delta) \leq C_3 (\log(\delta^{-b}))^c \mathcal{M}_{6,b,b}(\delta).
\end{equation}
\end{lemma}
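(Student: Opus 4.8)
The plan is to derive Lemma \ref{lem:LowerDimDecouplingMomentCurve} from the lower-degree decoupling of Lemma \ref{lem:LowerDegreeDecoupling} by unwinding the definitions of the asymmetric constants $M_{2,a,b}(\delta)$ and $M_{6,a,b}(\delta)$ and matching each term against the inequalities \eqref{eq:LowerDimensionalDecouplingl1} and \eqref{eq:LowerDimensionalDecouplingl2}. The parameter $b'$ in Lemma \ref{lem:LowerDegreeDecoupling} is chosen so that $\delta^{b'}$ is the final scale after decoupling: for $\ell=1$ one has $b'=3b$ and for $\ell=2$ one has $b'=b$, which is exactly where the hypotheses $0 \le a \le 3b$ (resp.\ $0 \le a \le b$) come from (they are the admissibility condition $0 \le a \le (3-\ell+1)b/\ell$). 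So the two cases of the lemma are the $\ell=1$ and $\ell=2$ specializations, and the logarithmic factor $(\log(\delta^{-b}))^c$ in the second case is precisely the factor $C_{\ell}'(C,\underline{D},\delta^{b'})$ from the lower-dimensional decoupling with $\delta^{b'}=\delta^b$.

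\medskip

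First I would treat \eqref{eq:LowerDimensionalDecouplingMomentCurve} ($\ell=1$, $p_\ell=2$, $p_k-p_\ell=10$). Starting from the definition of $M_{2,a,b}(\delta)$ we have an interval $I$ of length $\delta^a$; apply \eqref{eq:LowerDimensionalDecouplingl1} with $b$ replaced by $b$ (so the output scale is $\delta^{3b}$): the admissibility $0\le a\le 3b$ is the hypothesis. This gives
\begin{equation*}
\int_{\R^3} (|f_I|^2 * \phi_I)(|f_{I'}|^{10}*\phi_{I'}) \le C_1 \sum_{J\in\mathcal{P}(I,\delta^{3b})} \int_{\R^3} (|f_J|^2*\phi_J)(|f_{I'}|^{10}*\phi_{I'}).
\end{equation*}
Each summand on the right is bounded, by the definition of $M_{2,3b,b}(\delta)$ (the first interval now has the size $\delta^{3b}$ appearing in the subscript), by $M_{2,3b,b}(\delta)^{12}(\sum_{K\in\mathcal P(J,\delta)}\|f_K\|_{L^{12}}^2)(\sum_{K'\in\mathcal P(I',\delta)}\|f_{K'}\|_{L^{12}}^2)^5$. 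Summing over $J$ and using that $\sum_J \sum_{K\subseteq J}\|f_K\|^2_{L^{12}} = \sum_{K\in\mathcal P(I,\delta)}\|f_K\|^2_{L^{12}}$ (the $\delta$-children of the $J$'s partition $\mathcal P(I,\delta)$), we obtain exactly the defining inequality for $M_{2,a,b}(\delta)$ with constant $(C_1)^{1/12}M_{2,3b,b}(\delta)$; absorbing $(C_1)^{1/12}$ into $C_2$ finishes this case. The case \eqref{eq:LowerDimensionalDecouplingMomentCurveII} ($\ell=2$, $p_\ell=6=p_k-p_\ell$) is analogous but uses \eqref{eq:LowerDimensionalDecouplingl2}, so there is a cube-power/third-root structure: after applying \eqref{eq:LowerDimensionalDecouplingl2} with $\delta^b$ as output scale, one inserts the definition of $M_{6,b,b}(\delta)$ into each factor $\big(\int (|f_J|^6*\phi_J)(|f_{I'}|^6*\phi_{I'})\big)^{1/3}$, then sums the resulting cube and uses that raising to the power $1/3$ followed by summing and cubing is compatible with the way $\ell^2$-sums over $J$-children reassemble into the $\ell^2$-sum over $\mathcal P(I,\delta)$. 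The logarithmic factor $(\log(\delta^{-b}))^c$ is carried along verbatim from \eqref{eq:LowerDimensionalDecouplingl2}, and merging $(C_2)^{1/12}$ into $C_3$ gives the claim.

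\medskip

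The main obstacle, such as it is, is bookkeeping: one must be careful that the ``output'' scale in Lemma \ref{lem:LowerDegreeDecoupling} (namely $\delta^{3b}$ for $\ell=1$ and $\delta^b$ for $\ell=2$) is exactly the scale appearing in the middle subscript of the target constant $M_{\cdot,3b,b}$ resp.\ $M_{\cdot,b,b}$, and that the $\ell^2$-aggregation of the fine pieces $f_K$, $K\in\mathcal P(J,\delta)$, over the intermediate blocks $J$ telescopes correctly back to $\sum_{K\in\mathcal P(I,\delta)}\|f_K\|^2_{L^{12}}$ — this is where one uses $\bigcup_{J}\mathcal P(J,\delta) = \mathcal P(I,\delta)$ up to endpoints. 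No new analytic input is needed beyond Lemma \ref{lem:LowerDegreeDecoupling} and the definitions; the lemma is essentially a repackaging of lower-degree decoupling into the asymmetric-constant language used in the iteration of Section \ref{section:IterationMomentCurve}.
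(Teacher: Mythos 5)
Your proposal is correct and is exactly the intended derivation: the paper states the lemma as an immediate consequence of Lemma \ref{lem:LowerDegreeDecoupling} (the $\ell=1$ and $\ell=2$ cases with output scales $\delta^{3b}$ and $\delta^{b}$), and your unwinding of the definitions of $M_{2,a,b}$ and $M_{6,a,b}$, including the telescoping of the $\ell^2$-sums over the intermediate intervals $J$, is the right bookkeeping. The only (harmless) cosmetic point is that taking twelfth roots actually yields the sharper factor $(\log(\delta^{-b}))^{c/12}$, which is absorbed into the stated $(\log(\delta^{-b}))^{c}$.
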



The following is straight-forward from H\"older's inequality and parabolic rescaling (cf. \cite[Lemma~4.1]{GuoLiYungZorinKranich2021}):
\begin{lemma}[H\"older's~inequality~I]
\label{lem:HoelderInequalityI}
Let $a,b \in [0,1]$. Then
\begin{equation}
\label{eq:HoelderInequalityI}
\mathcal{M}_{2,a,b}(\delta) \leq C_4 \mathcal{M}_{6,a,b}(\delta)^{1/3} D(\delta/ \delta^b)^{2/3}.
\end{equation}
\end{lemma}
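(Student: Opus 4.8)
The goal is to bound the asymmetric bilinear constant $M_{2,a,b}(\delta)$ — which controls $\int (|f_I|^2 * \phi_I)(|f_{I'}|^{10} * \phi_{I'})$ with $|I| = \delta^a$, $|I'| = \delta^b$ — by $M_{6,a,b}(\delta)^{1/3}$ times a power of the linear decoupling constant at scale $\delta^{1-b}$. The plan is a direct Hölder interpolation between the two types of mixed-norm quantities, combined with parabolic (affine) rescaling of the interval $I'$ down to unit scale, which is what produces the factor $\mathcal{D}_3(\delta/\delta^b) = \mathcal{D}_3(\delta^{1-b})$.

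First I would apply Hölder's inequality in the integrand: write $|f_I|^2 = (|f_I|^6)^{1/3} \cdot 1^{2/3}$ and $|f_{I'}|^{10} = (|f_{I'}|^6)^{?} \cdots$ — more precisely, interpolate the pair $(|f_I|^2 * \phi_I, |f_{I'}|^{10}*\phi_{I'})$ between $(|f_I|^6 * \phi_I, |f_{I'}|^6 * \phi_{I'})$ (exponent $1/3$ on each factor, matched to the $M_6$ quantity) and a "trivial" endpoint where all the mass sits on $I'$. Concretely one expects
$$
\int (|f_I|^2 * \phi_I)(|f_{I'}|^{10}*\phi_{I'}) \le \Big( \int (|f_I|^6 * \phi_I)(|f_{I'}|^6 * \phi_{I'}) \Big)^{1/3} \Big( \int (|f_{I'}|^{12} * \phi_{I'}) \Big)^{2/3},
$$
after checking that $|f_I|^{2} = (|f_I|^6)^{1/3}$ pairs with an $L^{3/2}$-type weight and the complementary $L^3$ factor only involves $f_{I'}$; here one uses $|g|^p * \phi \lesssim (|g|^{p'} * \phi)^{p/p'}$-type convexity from the uncertainty principle lemma to move the convolution inside/outside powers at the cost of $C_4$. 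The first factor is exactly $M_{6,a,b}(\delta)^{12/3}$ times the square-function data, giving the $M_{6,a,b}(\delta)^{1/3}$ in the statement.

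Next I would handle the second factor $\int |f_{I'}|^{12} * \phi_{I'}$. By the uncertainty principle (Lemma with constant $C_p$) and $L^1$-normalization of $\phi_{I'}$, this is comparable to $\|f_{I'}\|_{L^{12}}^{12}$. Now $f_{I'} = \sum_{J' \in \mathcal{P}(I',\delta)} f_{J'}$ with $I'$ an interval of length $\delta^b$, so affine rescaling (the linear rescaling Lemma, applied with $I = I'$ and the scale $\delta$ inside) gives $\|f_{I'}\|_{L^{12}} \le \mathcal{D}_3(\delta^{1-b}) \big( \sum_{J'} \|f_{J'}\|_{L^{12}}^2 \big)^{1/2}$, i.e. the factor $D(\delta/\delta^b)^{2/3} = \mathcal{D}_3(\delta^{1-b})^{2/3}$ after raising to the power $2/3 \cdot \tfrac{1}{12}\cdot 12$ and matching normalizations. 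Collecting the square-function factors from both pieces — $\big(\sum_{J}\|f_J\|^2\big)$ to the first power and $\big(\sum_{J'}\|f_{J'}\|^2\big)$ to the fifth power, as demanded by the definition of $M_{2,a,b}$ — completes the estimate, absorbing the various $C_p$, $C_{12}$ into $C_4$.

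The main obstacle is purely bookkeeping: getting the Hölder exponents to line up so that the $I$-factor is raised to precisely the power needed to see $M_{6,a,b}^{1/3}$ while the leftover $I'$-only integral has the right exponent $12$ to feed into affine rescaling, and then tracking the powers of the two square functions through the process so the final inequality matches the stated normalization of $M_{2,a,b}$ exactly. One must also be slightly careful that moving $*\phi_I$ and $*\phi_{I'}$ through the Hölder step is legitimate — this is where the uncertainty-principle pointwise bound $|g_J|^p \le C_p(|g_J|^p * \phi_J)$ and the pointwise domination $|f_{I'}|^{10}*\phi_{I'} \lesssim (|f_{I'}|^{12}*\phi_{I'})^{5/6}$ (again via convexity of $t \mapsto t^{6/5}$ against the probability measure $\phi_{I'}$) are used. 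None of these steps is deep; the content is entirely in the arrangement, which is why I would present it as "straightforward from Hölder's inequality and parabolic rescaling."
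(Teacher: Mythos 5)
Your proposal is correct and follows essentially the same route as the paper: pointwise H\"older against the $L^1$-normalized weights to get $|f_I|^2*\phi_I\le C(|f_I|^6*\phi_I)^{1/3}$ and $|f_{I'}|^{10}*\phi_{I'}\le(|f_{I'}|^6*\phi_{I'})^{1/3}(|f_{I'}|^{12}*\phi_{I'})^{2/3}$, then H\"older in the integral to split off $\big(\int|f_{I'}|^{12}*\phi_{I'}\big)^{2/3}$, which is handled by affine rescaling at scale $\delta/\delta^b$; the exponent bookkeeping ($M_6^4\cdot\mathcal{D}^8$ against $(\sum_J)^1(\sum_{J'})^5$) checks out. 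The only cosmetic difference is that you attribute the convexity step to the uncertainty-principle lemma, whereas it is just H\"older/Jensen against the probability measure $\phi_I(x-y)\,dy$.
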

%
Another application of H\"older's inequality gives the following (see \cite[Lemma~4.2]{GuoLiYungZorinKranich2021}):
\begin{lemma}[H\"older's~inequality~II]
\label{lem:HoelderInequalityII}
Let $a,b \in [0,1]$. Then
\begin{equation}
\label{eq:HoelderInequalityII}
\mathcal{M}_{6,a,b}(\delta) \leq \mathcal{M}_{2,a,b}(\delta)^{1/2} \mathcal{M}_{2,b,a}(\delta)^{1/2}.
\end{equation}
\end{lemma}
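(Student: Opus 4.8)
The statement to prove is Lemma~\ref{lem:HoelderInequalityII}, asserting
\[
M_{6,a,b}(\delta) \leq M_{2,a,b}(\delta)^{1/2} M_{2,b,a}(\delta)^{1/2}.
\]

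\textbf{Approach.} The plan is to interpolate the sixth-power bilinear expression between the two asymmetric $(2,10)$-type expressions by a pointwise application of the Cauchy--Schwarz inequality, exploiting the symmetry $6 = \tfrac12\cdot 2 + \tfrac12 \cdot 10$ in each factor. Concretely, I would write
\[
(|f_I|^6 * \phi_I)(|f_{I'}|^6 * \phi_{I'})
= \big[ (|f_I|^6 * \phi_I)^{1/2}(|f_{I'}|^6 * \phi_{I'})^{1/2} \big]^2,
\]
and then recognize that the integrand factors as the product of $(|f_I|^2*\phi_I)^{1/2}\cdot(|f_{I'}|^{10}*\phi_{I'})^{1/2}$-type pieces only after first passing through Hölder on the convolutions. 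The cleaner route: observe that by Cauchy--Schwarz in the convolution variable,
\[
|f_I|^6 * \phi_I \leq (|f_I|^2 * \phi_I)^{1/2}\,(|f_I|^{10} * \phi_I)^{1/2},
\]
since $\phi_I$ is $L^1$-normalized and $|g|^6 = |g|^{1}\cdot|g|^{5} = (|g|^2)^{1/2}(|g|^{10})^{1/2}$. Applying this to both $f_I$ and $f_{I'}$ and then Cauchy--Schwarz on the $\R^3$-integral yields
\[
\int_{\R^3} (|f_I|^6 * \phi_I)(|f_{I'}|^6 * \phi_{I'})
\leq \Big( \int (|f_I|^2 * \phi_I)(|f_{I'}|^{10} * \phi_{I'}) \Big)^{1/2}
\Big( \int (|f_I|^{10} * \phi_I)(|f_{I'}|^{2} * \phi_{I'}) \Big)^{1/2}.
\]

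\textbf{Key steps in order.} First, establish the pointwise convolution inequality via Cauchy--Schwarz with weight $\phi_I$, writing $|f_I|^6 = (|f_I|^2)^{1/2}(|f_I|^{10})^{1/2}$ and splitting $\phi_I = \phi_I^{1/2}\phi_I^{1/2}$; this is exactly the mechanism already used in the proof of Lemma~\ref{lem:HoelderInequalityI}. Second, multiply the two resulting pointwise bounds (one for the $I$-factor, one for the $I'$-factor) and integrate over $\R^3$. Third, apply Cauchy--Schwarz to the integral to separate the $(2,10)$ and $(10,2)$ expressions. Fourth, unfold the definitions of $M_{2,a,b}(\delta)$ and $M_{2,b,a}(\delta)$: the first integral is bounded by $M_{2,a,b}(\delta)^{12}(\sum_J\|f_J\|^2)(\sum_{J'}\|f_{J'}\|^2)^5$ and the second, upon swapping the roles of $I$ and $I'$ (so that now $|I|=\delta^b$, $|I'|=\delta^a$), is bounded by $M_{2,b,a}(\delta)^{12}(\sum_{J'}\|f_{J'}\|^2)(\sum_J\|f_J\|^2)^5$. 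Fifth, take the product of the square roots and collect the exponents: the $\sum_J\|f_J\|^2$ factor appears with total exponent $\tfrac12(1) + \tfrac12(5) = 3$, and likewise for $\sum_{J'}\|f_{J'}\|^2$, matching exactly the normalization in the definition of $M_{6,a,b}(\delta)^{12}$. Taking twelfth roots gives the claim.

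\textbf{Main obstacle.} There is essentially no hard analytic obstacle here — the argument is a bookkeeping exercise in Hölder/Cauchy--Schwarz exponents. The only point requiring mild care is verifying that, in the second integral, relabeling $I \leftrightarrow I'$ genuinely produces the constant $M_{2,b,a}(\delta)$ rather than $M_{2,a,b}(\delta)$: this hinges on the asymmetry of the definition, where the interval carrying the sixth-power-type (or here, second-power-type) weight and the interval carrying the tenth-power weight play distinct roles, and on the fact that the unit-distance separation and the partitions $\mathcal{P}(I,\delta)$, $\mathcal{P}(I',\delta)$ are symmetric under the swap. Once this relabeling is correctly tracked, the exponent arithmetic on the right-hand side closes cleanly.
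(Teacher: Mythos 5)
Your proposal is correct and follows essentially the same route as the paper: a pointwise Cauchy--Schwarz splitting $|f|^6*\phi \le (|f|^2*\phi)^{1/2}(|f|^{10}*\phi)^{1/2}$ in each factor, followed by Cauchy--Schwarz on the $\R^3$-integral, and then unfolding the definitions with the roles of $I$ and $I'$ swapped in the second factor. The exponent bookkeeping you carry out (the paper leaves it as ``immediate'') is also correct.
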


\subsubsection{The decoupling iteration}

By Lemma \ref{lem:LowerDimDecouplingMomentCurve}, \ref{lem:HoelderInequalityI}, and \ref{lem:HoelderInequalityII}, we have the following key iteration step:
\begin{lemma}[Iteration~step~for~the~moment~curve]
\label{lem:KeyIterationMomentCurve}
Let $a,b \in [0,1]$ and $0 < a \leq 3b$. We find
\begin{equation*}
\mathcal{M}_{2,a,b}(\delta) \leq C_5 \mathcal{M}_{2,3b,3b}^{1/3}(\delta) \log(\delta^{-3b})^c D(\delta/\delta^b)^{2/3}.
\end{equation*}
\end{lemma}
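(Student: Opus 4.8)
The plan is to chain together the three lemmas stated just above in the natural order dictated by the hypothesis $0 < a \le 3b$. The first move is to use Lemma \ref{lem:LowerDimDecouplingMomentCurve}, specifically the estimate \eqref{eq:LowerDimensionalDecouplingMomentCurve}, which under $0 \le a \le 3b$ gives $M_{2,a,b}(\delta) \le C_2 M_{2,3b,b}(\delta)$; this collapses the first parameter $a$ down to the boundary value $3b$ at no cost beyond the absolute constant $C_2$. The point of this step is purely to normalize the interval $I$ so that the subsequent applications of H\"older's inequality are available with matching indices.

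Next I would run Lemma \ref{lem:HoelderInequalityI} on the quantity $M_{2,3b,b}(\delta)$, which produces $M_{2,3b,b}(\delta) \le C_4 M_{6,3b,b}(\delta)^{1/3} D(\delta/\delta^b)^{2/3}$. Here $D(\delta/\delta^b)$ is the (rescaled) full linear decoupling constant $\mathcal{D}_3$ at scale $\delta^{1-b}$, coming from the $|f_{I'}|^{12}*\phi_{I'}$ factor after parabolic rescaling; this is exactly the $D(\delta/\delta^b)^{2/3}$ appearing in the target. At this stage I have $M_{2,a,b}(\delta) \le C_2 C_4\, M_{6,3b,b}(\delta)^{1/3} D(\delta/\delta^b)^{2/3}$, so it remains to bound $M_{6,3b,b}(\delta)$ in terms of $M_{2,3b,3b}(\delta)$ with only a logarithmic loss.

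For that final reduction I would first apply \eqref{eq:LowerDimensionalDecouplingMomentCurveII} from Lemma \ref{lem:LowerDimDecouplingMomentCurve}: since $b \le 3b$ (i.e.\ $0 \le b \le 3b$), we get $M_{6,3b,b}(\delta) \le C_3 (\log(\delta^{-b}))^c M_{6,3b,3b}(\delta)$, bringing the second parameter up to $3b$ as well. Then Lemma \ref{lem:HoelderInequalityII} with $a \rightsquigarrow 3b$, $b \rightsquigarrow 3b$ gives $M_{6,3b,3b}(\delta) \le M_{2,3b,3b}(\delta)^{1/2} M_{2,3b,3b}(\delta)^{1/2} = M_{2,3b,3b}(\delta)$. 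Substituting $M_{6,3b,b}(\delta)^{1/3} \le C_3^{1/3}(\log(\delta^{-b}))^{c/3} M_{2,3b,3b}(\delta)^{1/3}$ back into the chain and absorbing $C_2 C_4 C_3^{1/3}$ into $C_5$, together with the crude bound $(\log(\delta^{-b}))^{c/3} \le (\log(\delta^{-3b}))^c$ (valid once $\delta$ is small), yields exactly
\[
M_{2,a,b}(\delta) \le C_5\, M_{2,3b,3b}^{1/3}(\delta)\, \log(\delta^{-3b})^c\, D(\delta/\delta^b)^{2/3}.
\]

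I expect no serious obstacle here — this is a bookkeeping lemma assembling earlier results — but the one point requiring care is the consistent tracking of the logarithmic exponent and the rescaled scale $\delta/\delta^b = \delta^{1-b}$ through the parabolic rescaling inside Lemma \ref{lem:HoelderInequalityI}, and making sure the hypotheses $0 \le a \le 3b$ and $0 \le b \le 3b$ are in force at each invocation (both follow immediately from $0 < a \le 3b$ and $b \in [0,1]$). A minor subtlety is that $\log(\delta^{-b})$ must be reconciled with $\log(\delta^{-3b})$; since $c \ge 1$ and $\log(\delta^{-3b}) = 3\log(\delta^{-b}) \ge \log(\delta^{-b})$, the replacement only weakens the bound and is harmless.
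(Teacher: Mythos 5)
Your proof is correct and follows exactly the paper's chain: \eqref{eq:LowerDimensionalDecouplingMomentCurve} to normalize $a\to 3b$, then Lemma \ref{lem:HoelderInequalityI}, then \eqref{eq:LowerDimensionalDecouplingMomentCurveII}, then Lemma \ref{lem:HoelderInequalityII}. One cosmetic point: to apply \eqref{eq:LowerDimensionalDecouplingMomentCurveII} to $M_{6,3b,b}$ you must swap the roles of $I$ and $I'$ (decoupling the \emph{longer} interval of length $\delta^b$ down to scale $\delta^{3b}$), which directly produces the factor $\log(\delta^{-3b})^c$ rather than the $\log(\delta^{-b})^c$ you wrote, so your final reconciliation step is not even needed.
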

\begin{proof}
By successive applications of the aforementioned lemmas, we find
\begin{equation*}
\begin{split}
\mathcal{M}_{2,a,b}(\delta) &\leq C_2 \mathcal{M}_{2,3b,b}(\delta) \leq C_2 C_4 \mathcal{M}_{6,3b,b}(\delta)^{1/3} D(\delta/ \delta^b)^{2/3} \\
&\leq C_2 C_3 C_4 \mathcal{M}_{6,3b,3b}^{1/3}(\delta) \log(\delta^{-3b})^c D(\delta/ \delta^b)^{2/3} \\
&\leq \underbrace{C_2 C_3 C_4}_{C_5} \mathcal{M}_{2,3b,3b}^{1/3}(\delta) \log(\delta^{-3b})^c D(\delta/ \delta^b)^{2/3}.
\end{split}
\end{equation*}
\end{proof}
To make the iteration effective, we initially divide the unit size intervals $I$, $I'$ considered in $B(\delta)$ into $\nu^{-1}$ smaller intervals, and then use the previously established iteration.
Let $\nu = \delta^b$. We choose $\nu = \delta^{1/3^N}$ such that in $N$ iterations of Lemma \ref{lem:KeyIterationMomentCurve} we reach the scale $\delta$, where decoupling becomes trivial. We use the estimate
\begin{equation}
\label{eq:TrivialDecouplingII}
\mathcal{M}_{6,0,0}(\delta) \leq C_6 \nu^{-\frac{1}{2}} \mathcal{M}_{6,b,b}(\delta)
\end{equation}
due to the Cauchy-Schwarz inequality:
\begin{equation*}
\begin{split}
&\quad \big( \int \big( \big| \sum_{J \in \mathcal{I}(I,\delta^b)} f_J \big|^6 * \phi_I \big) \big( \big| \sum_{J' \in \mathcal{I}(I',\delta^b)} f_{J'} \big|^6 * \phi_{I'} \big) \big)^{\frac{1}{6}} \\
&\leq \sum_{\substack{J \in \mathcal{I}(I,\delta^b), \\ J' \in \mathcal{I}(I',\delta^b)}} \big( \int \big( |f_J|^6 * \phi_I \big) \big( |f_{J'}|^6 * \phi_{I'} \big) \big)^{\frac{1}{6}} \\
&\leq \mathcal{M}_{6,b,b}^2(\delta) \sum_{\substack{J \in \mathcal{I}(I,\delta^b), \\ J' \in \mathcal{I}(I',\delta^b)}} \big( \sum_{K \in \mathcal{I}(J,\delta)} \| f_K \|_{L^{12}(\R^3)}^2 \big)^{\frac{1}{2}} \big( \sum_{K' \in \mathcal{I}(J',\delta)} \| f_{K'} \|_{L^{12}(\R^3)}^2 \big)^{\frac{1}{2}} \\
&\leq \mathcal{M}_{6,b,b}^2(\delta) C_6^2 \nu^{-1} \big( \sum_{K \in \mathcal{I}(I,\delta)} \| f_K \|_{L^{12}(\R^3)}^2 \big)^{\frac{1}{2}} \big( \sum_{K' \in \mathcal{I}(I',\delta)} \| f_{K'} \|_{L^{12}(\R^3)}^2 \big)^{\frac{1}{2}}.
\end{split}
\end{equation*}

 By the decoupling result of Bourgain--Demeter--Guth \cite{BourgainDemeterGuth2016} we have 
\begin{equation}
\label{eq:DecouplingBDG}
\mathcal{D}_3(\delta) \leq C_\varepsilon \delta^{-\varepsilon},
\end{equation}
which gives:

\begin{lemma}
\label{lem:IterationI}
Let $N \in \N$. Suppose that $\delta \in 2^{\Z}$ and $\delta^{-\frac{1}{3^N}} \in \N$. Then the following estimate holds:
\begin{equation}
\label{eq:ExplicitIterationI}
D(\delta) \leq C_7 \delta^{\frac{\varepsilon}{3^N} \big( 1 + \frac{2N}{3} - \frac{1}{2 \varepsilon} \big)} \log(\delta^{-1})^{3c} C_\varepsilon^{1- \frac{1}{3^N}} \delta^{-\varepsilon}.
\end{equation}
\end{lemma}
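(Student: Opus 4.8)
The plan is to iterate Lemma \ref{lem:KeyIterationMomentCurve} exactly $N$ times, starting from the bilinear decoupling constant at unit scale and ending at scale $\delta$ where decoupling becomes trivial, then feed the result back through the linear-to-bilinear reduction \eqref{eq:LinearBilinearReduction}. Concretely, I would first pass from $\mathcal{D}_3(\delta)$ to $\mathcal{B}_3(\delta)$; modulo the logarithmic factor this is controlled by $M_{6,0,0}(\delta)$, and by \eqref{eq:TrivialDecouplingII} with $\nu = \delta^{1/3^N}$ one reduces $M_{6,0,0}$ to $M_{6,b,b}$ with $b = 1/3^N$ at the cost of $C_6 \nu^{-1/2} = C_6 \delta^{-1/(2\cdot 3^N)}$. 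Using Lemma \ref{lem:HoelderInequalityII} (with $a=b$) one passes from $M_{6,b,b}$ to $M_{2,b,b}$ at no cost, putting us in position to apply the key iteration step.

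The heart of the argument is the recursion. Applying Lemma \ref{lem:KeyIterationMomentCurve} with $a=b$ (the hypothesis $0<a\le 3b$ is satisfied) gives
\begin{equation*}
M_{2,b,b}(\delta) \le C_5 \, M_{2,3b,3b}^{1/3}(\delta)\, \log(\delta^{-3b})^c\, D(\delta/\delta^b)^{2/3}.
\end{equation*}
Since $3b = 1/3^{N-1}$, iterating this $N$ times replaces $M_{2,b,b}$ by $M_{2,1,1}(\delta)$, which at scale equal to the interval size is trivially $O(1)$. At the $j$-th stage the scale parameter is $b_j = 1/3^{N-j}$, so the $D$-factor that appears is $D(\delta/\delta^{b_j}) = D(\delta^{1-b_j})$; the exponents $1/3$ compound to give a total power $\sum_{j=1}^{N}(2/3)\cdot(1/3)^{j-1}$ on a geometric-type product of these $D$-factors, and the constants $C_5$ and the logarithmic factors $\log(\delta^{-3b_j})^c$ accumulate multiplicatively. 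One then bounds each $D(\delta^{1-b_j}) \le C_\varepsilon \delta^{-\varepsilon(1-b_j)} \le C_\varepsilon \delta^{-\varepsilon}$ by \eqref{eq:DecouplingBDG}, collects the $\log$-factors into a single power $\log(\delta^{-1})^{3c}$ (there are $O(N)$ of them, but one can absorb $N$ into the exponent or keep track of it — the bookkeeping in \eqref{eq:ExplicitIterationI} reflects the $1+\tfrac{2N}{3}$ term coming precisely from summing the geometric series of $D$-powers together with the initial $\nu^{-1/2}$ loss), and re-inserts everything into \eqref{eq:LinearBilinearReduction}, absorbing the extra $\log(\delta^{-1})$ from the linear-to-bilinear reduction into the constant $C_7$.

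The main obstacle is the careful bookkeeping of exponents: one must track how the compounding $1/3$-powers turn the sum $\sum_j (1-b_j)\varepsilon$ of $D$-exponents into the expression $\tfrac{\varepsilon}{3^N}(1+\tfrac{2N}{3}-\tfrac{1}{2\varepsilon})$ appearing in \eqref{eq:ExplicitIterationI}, where the $-\tfrac{1}{2\varepsilon}$ term records the $\nu^{-1/2}$ loss from \eqref{eq:TrivialDecouplingII} and the $\tfrac{2N}{3}$ term records the telescoping geometric contributions; one must also verify that the accumulated absolute constants (finitely many factors of $C_1,\dots,C_6$, each raised to a bounded power since the $1/3$-exponents are summable) can be absorbed into a single $C_7$, and that the $\log$-losses, of which there are $O(N)$, genuinely collapse to $\log(\delta^{-1})^{3c}$ after using $\log(\delta^{-3b_j}) \le \log(\delta^{-1})$ and the summability of their exponents. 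None of this is deep, but it is where an error would most easily creep in, so I would write it as an explicit induction on $N$ tracking the precise constant and exponent at each stage.
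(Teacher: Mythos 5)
Your proposal is correct and follows essentially the same route as the paper: linear-to-bilinear reduction, the trivial reduction to scale $\nu = \delta^{1/3^N}$ via \eqref{eq:TrivialDecouplingII}, H\"older to pass from $M_{6,b,b}$ to $M_{2,b,b}$, an $N$-fold iteration of Lemma \ref{lem:KeyIterationMomentCurve} terminating at the trivial constant $M_{2,1,1}(\delta)$, and substitution of the Bourgain--Demeter--Guth bound with the exponents tracked exactly as you describe. Two small points for the write-up: you must retain the exponent $-\varepsilon(1-b_j)$ rather than the crude $-\varepsilon$ (your parenthetical ``$\leq C_\varepsilon\delta^{-\varepsilon}$'' would discard the gains $\delta^{\varepsilon b_j}$ that produce the $\tfrac{2N}{3}$ term, though your subsequent bookkeeping shows you intend to keep them), and the extra $\log(\delta^{-1})$ from the linear-to-bilinear reduction is absorbed into the exponent $3c$ (using $c\geq 1$ and that the iterated log-exponents sum to at most $2c$), not into the absolute constant $C_7$.
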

\begin{proof}
We find iterating Lemma \ref{lem:KeyIterationMomentCurve} $N$ times:
\begin{equation}
\label{eq:IterationI}
\mathcal{M}_{2,b,b}(\delta) \leq C_5^2 \mathcal{M}_{6,3^N b, 3^N b}^{1/3^N} \log(\delta^{-1})^{2c} \prod_{j=0}^{N-1} D(\delta/\delta^{3^j b})^{(2/3) \cdot 1/3^j}.
\end{equation}
From the bilinear reduction, we have (here we use $\delta \in 2^{\Z}$)
\begin{equation*}
\mathcal{D}_3(\delta) \leq C_1 \log(\delta^{-1}) \mathcal{M}_{6,0,0}(\delta).
\end{equation*}
We reduce the scale in $\mathcal{M}_{6,0,0}(\delta)$ to $\nu$ by \eqref{eq:TrivialDecouplingII} such that
\begin{equation*}
\mathcal{D}_3(\delta) \leq C_1 C_6 \nu^{-\frac{1}{2}} \log(\delta^{-1}) \mathcal{M}_{6,b,b}(\delta).
\end{equation*}
Now we plug in \eqref{eq:IterationI} to find the following recursive estimate for the linear decoupling constant:
\begin{equation*}
\mathcal{D}_3(\delta) \leq \underbrace{C_1 C_5^2 C_6}_{C_7} \delta^{-\frac{b}{2}} \log(\delta^{-1})^{3c} \prod_{j=0}^{N-1} D(\delta/ \nu^{3^j})^{\frac{2}{3} \cdot \frac{1}{3^j}}.
\end{equation*}
By \eqref{eq:DecouplingBDG}, we find
\begin{equation*}
\begin{split}
D(\delta) &\leq C_7 \delta^{-\frac{b}{2}} \log(\delta^{-1})^{3c} \prod_{j=0}^{N-1} (C_\varepsilon \delta^{-\varepsilon ( 1- 3^{j-N})} )^{\frac{2}{3} \cdot \frac{1}{3^j}} \\
&= C_7 \delta^{-\frac{b}{2}} \log(\delta^{-1})^{3c} C_\varepsilon^{1- \frac{1}{3^N}} \delta^{-\varepsilon (1-\frac{1}{3^N})} \delta^{\varepsilon \frac{2N}{3 \cdot 3^N}} \\
&= C_7 \delta^{\frac{\varepsilon}{3^N} ( 1 + \frac{2N}{3} - \frac{1}{2 \varepsilon})} \log(\delta^{-1})^{3c} C_\varepsilon^{1-\frac{1}{3^N}} \delta^{-\varepsilon}.
\end{split}
\end{equation*}
\end{proof}
In the next step, we choose $N=N(\varepsilon)$, which simplifies the above expression for $\delta \in 2^{\Z}$ and $\delta^{-\frac{1}{3^N}} \in \N$.

\begin{lemma}
\label{lem:ChoiceNMomentCurve}
Let $0<\varepsilon<\varepsilon_0=\varepsilon_0(c)$, and $N \in \N$ such that
\begin{equation}
\label{eq:ChoiceNMomentCurve}
1 + \frac{2N}{3} - \frac{1}{2\varepsilon} \in [\frac{2}{3},2].
\end{equation}
For $\delta \in (\delta_n)_{n=n_0}^\infty$ with $\delta_n = 2^{-n 3^{10N}}$, $n_0 = n_0(c)$, we have the following:
\begin{equation*}
\mathcal{D}_3(\delta) \leq C_7 C_\varepsilon^{1-\frac{1}{3^N}} \delta^{-\varepsilon}.
\end{equation*}

\end{lemma}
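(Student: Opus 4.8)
The plan is to start from the explicit bound in Lemma \ref{lem:IterationI}, namely
\[
\mathcal{D}_3(\delta) \leq C_7 \, \delta^{\frac{\varepsilon}{3^N}\left(1 + \frac{2N}{3} - \frac{1}{2\varepsilon}\right)} \log(\delta^{-1})^{3c} \, C_\varepsilon^{1 - \frac{1}{3^N}} \, \delta^{-\varepsilon},
\]
and simply absorb the two extra factors $\delta^{\frac{\varepsilon}{3^N}(1 + \frac{2N}{3} - \frac{1}{2\varepsilon})}$ and $\log(\delta^{-1})^{3c}$ into each other. The point of the constraint \eqref{eq:ChoiceNMomentCurve} is that it forces the exponent $1 + \frac{2N}{3} - \frac{1}{2\varepsilon}$ to lie in $[\frac{2}{3}, 2]$, hence to be a \emph{positive} quantity bounded away from $0$. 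Thus $\delta^{\frac{\varepsilon}{3^N}(1 + \frac{2N}{3} - \frac{1}{2\varepsilon})}$ is a genuinely negative power of $\delta^{-1}$ (since $0 < \delta < 1$), and multiplying it against the polylogarithmic factor $\log(\delta^{-1})^{3c}$ gives something bounded by an absolute constant once $\delta^{-1}$ is large enough.

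First I would check that the arithmetic hypotheses are consistent: $\delta \in 2^{\Z}$ and $\delta^{-1/3^N} \in \N$ are both guaranteed by $\delta \in (\delta_n)_{n \geq n_0}$ with $\delta_n = 2^{-n 3^{10N}}$, because $3^{10N}$ is divisible by $3^N$, so $\delta_n^{-1/3^N} = 2^{n 3^{10N}/3^N} = 2^{n 3^{9N}} \in \N$; this is exactly why Lemma \ref{lem:IterationI} applies. Next, using $1 + \frac{2N}{3} - \frac{1}{2\varepsilon} \geq \frac{2}{3}$ from \eqref{eq:ChoiceNMomentCurve}, I would write
\[
\delta^{\frac{\varepsilon}{3^N}\left(1 + \frac{2N}{3} - \frac{1}{2\varepsilon}\right)} \log(\delta^{-1})^{3c} \leq \exp\!\left(-\frac{2\varepsilon}{3 \cdot 3^N}\log(\delta^{-1}) + 3c\log\log(\delta^{-1})\right),
\]
and observe that the right-hand side is $\leq 1$ as soon as $\frac{2\varepsilon}{3 \cdot 3^N}\log(\delta^{-1}) \geq 3c\log\log(\delta^{-1})$, which holds for all $\delta^{-1}$ larger than a threshold depending only on $\varepsilon$, $N$ and $c$ — and since $N$ is determined by $\varepsilon$ via \eqref{eq:ChoiceNMomentCurve} and $\varepsilon < \varepsilon_0(c)$, this threshold depends only on $c$. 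That is the role of $n_0 = n_0(c)$: choosing $n \geq n_0$ makes $\delta_n^{-1}$ exceed the threshold. Feeding this back into Lemma \ref{lem:IterationI} leaves exactly $\mathcal{D}_3(\delta) \leq C_7 \, C_\varepsilon^{1 - 1/3^N} \, \delta^{-\varepsilon}$, as claimed.

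The only mild subtlety — and the step I would be most careful about — is the uniformity bookkeeping: one must confirm that the threshold on $\delta^{-1}$ can be taken to depend on $c$ alone and not on $\varepsilon$. This works because the condition \eqref{eq:ChoiceNMomentCurve} ties $N$ to $\varepsilon$ so tightly that $\frac{\varepsilon}{3^N}$ is comparable (up to the fixed window $[\frac{2}{3},2]$) to $\frac{1}{2 \cdot 3^N \cdot (\frac{3}{2}(1 + 2N/3))} \sim \frac{1}{N 3^N}$; one then needs the elementary fact that $\frac{1}{N 3^N}\log(\delta^{-1})$ beats $\log\log(\delta^{-1})$ uniformly in $N$ for $\delta^{-1}$ past a point depending only on the constant $c$, which holds because $N$ only grows logarithmically compared to the available power of $\log \delta^{-1}$. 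Everything else — the divisibility of $3^{10N}$ by $3^N$, the monotonicity of $\mathcal{D}_3$ — is routine.
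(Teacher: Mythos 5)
Your proposal is correct and follows essentially the same route as the paper: start from Lemma \ref{lem:IterationI}, use the lower bound $1+\frac{2N}{3}-\frac{1}{2\varepsilon}\geq\frac{2}{3}$ from \eqref{eq:ChoiceNMomentCurve} to get a genuine negative power $\delta^{\frac{2\varepsilon}{3\cdot 3^N}}$, and absorb $\log(\delta^{-1})^{3c}$ into it for $n\geq n_0(c)$ and $\varepsilon<\varepsilon_0(c)$, using $\varepsilon\sim 1/N$ and the fact that $\log\delta_n^{-1}=n\,3^{10N}\log 2$ overwhelms the polylogarithmic factor uniformly. The paper's proof carries out exactly this comparison, just with the explicit intermediate bounds $\delta^{2\varepsilon/(3\cdot 3^N)}\leq 2^{-n3^{8N}}$ and $\log(\delta^{-1})^{3c}\leq n^{3c}3^{30Nc}$.
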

\begin{proof}
With the assumptions of Lemma \ref{lem:IterationI} satisfied, we find by \eqref{eq:IterationI}
\begin{equation*}
\mathcal{D}_3(\delta) \leq C_7 \delta^{\frac{\varepsilon}{3^N} \big( 1 + \frac{2N}{3} - \frac{1}{2 \varepsilon} \big)} \log(\delta^{-1})^{3c} C_\varepsilon^{1- \frac{1}{3^N}} \delta^{-\varepsilon}.
\end{equation*}
By \eqref{eq:ChoiceNMomentCurve} this simplifies to
\begin{equation*}
\mathcal{D}_3(\delta) \leq C_7 \log (\delta^{-1})^{3c} \delta^{\frac{2 \varepsilon}{3^N \cdot 3}} C_\varepsilon^{1- \frac{1}{3^N}} \delta^{-\varepsilon}.
\end{equation*}
Since $\delta = 2^{-n \cdot 3^{10 N}}$, we show that for $n \geq n_0(c)$  and $0<\varepsilon<\varepsilon_0$
\begin{equation*}
\log(\delta^{-1})^{3c} \delta^{\frac{2 \varepsilon}{3^N \cdot 3}} \leq 1.
\end{equation*}
First we note that
\begin{equation*}
\delta^{\frac{2 \varepsilon}{3 \cdot 3^N}} \leq \delta^{\frac{1}{3^{2N}}} \leq 2^{-n \cdot 3^{8N }}.
\end{equation*}
Here we use $\varepsilon \sim \frac{1}{N}$, and for $0<\varepsilon<\varepsilon_0$, $N$ becomes large enough to argue like in the above estimate. Moreover,
\begin{equation*}
\log(\delta^{-1})^{3c} \leq n^{3c} 3^{30N c} \log(2)^{3c} \leq n^{3c} 3^{30Nc}.
\end{equation*}
First, we see that
\begin{equation*}
3^{30 Nc} \leq 2^{\log (3) 30 Nc} \leq 2^{\frac{n}{2} 3^{8N}}
\end{equation*}
by choosing $0<\varepsilon<\varepsilon_0(c)$ small enough such that $30 N c \log(3) \leq 3^{8N}/2$ (since $N$ becomes large enough for $N \sim 1/\varepsilon$ such that the inequality holds).
Secondly, we can choose $n \geq n_0(c)$ large enough such that
\begin{equation*}
3 \log_2(n) c \leq \frac{n}{2} \Rightarrow 2^{\log_2(n) 3c} \leq 2^{\frac{n 3^{8N}}{2}}.
\end{equation*}

Then we arrive at the claim
\begin{equation*}
\mathcal{D}_3(\delta) \leq C_7 C_\varepsilon^{1-\frac{1}{3^N}} \delta^{-\varepsilon}.
\end{equation*}
\end{proof}
This absorbs the additional $\log(\delta^{-1})^{3c}$-factor, which is absent in Li's proof of improved decoupling for the parabola \cite{Li2021}. We give the concluding arguments from \cite{Li2021} for self-containedness. Next, we use submultiplicativity to extend this estimate to all $\delta \in \N^{-1}$:
\begin{lemma}
\label{lem:IntermediateResultIIMomentCurve}
Let $0<\varepsilon<\varepsilon_0=\varepsilon_0(c)$ and $n_0 = n_0(c)$ such that Lemma \ref{lem:ChoiceNMomentCurve} is valid. Then there is some $a >0$ such that we find for all $\delta \in \N^{-1}$
\begin{equation}
\label{eq:IntermediateResultIIMomentCurve}
\mathcal{D}_3(\delta) \leq C_{8} 2^{n_0 \cdot 3^{\frac{a}{\varepsilon}}} C_\varepsilon^{1-a/\varepsilon} \delta^{-\varepsilon}.
\end{equation}
\end{lemma}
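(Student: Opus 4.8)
The plan is to combine the submultiplicativity estimate \eqref{eq:SubMultiplicativity}, the monotonicity of $\mathcal{D}_3$, and the trivial bound \eqref{eq:TrivialDecoupling} in order to propagate the estimate of Lemma \ref{lem:ChoiceNMomentCurve}, which is only known on the sparse set of scales $\delta_n = 2^{-n 3^{10N}}$, $n\geq n_0$, to an arbitrary $\delta\in\N^{-1}$, losing only a single $\delta$-independent factor. First we reduce to dyadic scales: for $\delta\in\N^{-1}$ set $\delta' = 2^{-\lceil \log_2\delta^{-1}\rceil}\in\N^{-1}$, so that $\delta'\leq\delta<2\delta'$; by monotonicity $\mathcal{D}_3(\delta)\leq\mathcal{D}_3(\delta')$ and $(\delta')^{-\varepsilon}\leq 2\delta^{-\varepsilon}$, so it suffices to treat $\delta=2^{-m}$ with $m\in\N$, absorbing the factor $2$ into $C_8$. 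Write $L = 3^{10N}$. If $m<n_0 L$, then $\delta>\delta_{n_0}$, and monotonicity together with Lemma \ref{lem:ChoiceNMomentCurve} gives $\mathcal{D}_3(\delta)\leq\mathcal{D}_3(\delta_{n_0})\leq C_7 C_\varepsilon^{1-1/3^N}\delta_{n_0}^{-\varepsilon} = C_7 C_\varepsilon^{1-1/3^N}2^{n_0 L\varepsilon}$.

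If $m\geq n_0 L$, set $n_*=\lfloor m/L\rfloor\geq n_0$, so that $\delta_{n_*}=2^{-n_* L}$ is an admissible scale of Lemma \ref{lem:ChoiceNMomentCurve} with $\delta\leq\delta_{n_*}\leq 2^{-L}\leq\tfrac{1}{32}$ and $\delta/32\delta_{n_*}=2^{-(5+m-n_*L)}\in\N^{-1}$. Then \eqref{eq:SubMultiplicativity} with $\sigma=\delta_{n_*}$, followed by Lemma \ref{lem:ChoiceNMomentCurve} on the first factor, \eqref{eq:TrivialDecoupling} on the second, the identity $\delta_{n_*}^{-\varepsilon}=2^{n_* L\varepsilon}\leq 2^{m\varepsilon}=\delta^{-\varepsilon}$, and $32\delta_{n_*}/\delta<2^{5+L}$, yields
\begin{equation*}
\mathcal{D}_3(\delta)\leq\mathcal{D}_3(\delta_{n_*})\,\mathcal{D}_3(\delta/32\delta_{n_*})\leq C_7 C_\varepsilon^{1-1/3^N}\delta_{n_*}^{-\varepsilon}\,(32\delta_{n_*}/\delta)^{1/2}\leq C_7 C_\varepsilon^{1-1/3^N}2^{(5+L)/2}\,\delta^{-\varepsilon}.
\end{equation*}
Since by \eqref{eq:ChoiceNMomentCurve} we have $N\leq\tfrac{3}{4\varepsilon}+\tfrac32\leq\varepsilon^{-1}$ for $\varepsilon<\varepsilon_0$ small, both residual prefactors are at most $C_7 C_\varepsilon^{1-1/3^N}2^{n_0 3^{10/\varepsilon}}$ (using $L\leq 3^{10/\varepsilon}$, $\varepsilon<1$ and $n_0\geq 1$), so in all cases it remains to verify $C_7 C_\varepsilon^{1-1/3^N}2^{n_0 3^{10/\varepsilon}}\leq C_8 2^{n_0 3^{a/\varepsilon}}C_\varepsilon^{1-a/\varepsilon}$.

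Writing $C_\varepsilon^{1-1/3^N}=C_\varepsilon^{1-a/\varepsilon}C_\varepsilon^{a/\varepsilon-1/3^N}\leq C_\varepsilon^{1-a/\varepsilon}C_\varepsilon^{a/\varepsilon}$ (valid as $C_\varepsilon\geq 1$), the last inequality reduces, with $C_8=2C_7$, to the elementary bound $2^{n_0 3^{10/\varepsilon}}C_\varepsilon^{a/\varepsilon}\leq 2^{n_0 3^{a/\varepsilon}}$, i.e.\ $n_0 3^{10/\varepsilon}+(a/\varepsilon)\log_2 C_\varepsilon\leq n_0 3^{a/\varepsilon}$. Using the (doubly exponential) bound $\log_2 C_\varepsilon\leq B^{1/\varepsilon}$ provided by the Bourgain--Demeter--Guth argument, for an absolute constant $B$, this holds once $a$ is chosen large enough in terms of $B$ (for instance $a\geq\max(11,\log_3 B+1)$) and $\varepsilon_0=\varepsilon_0(c)$ is shrunk so that each of $n_0 3^{10/\varepsilon}$ and $(a/\varepsilon)B^{1/\varepsilon}$ is $\leq\tfrac12 n_0 3^{a/\varepsilon}$, proving \eqref{eq:IntermediateResultIIMomentCurve}. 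The main obstacle is precisely this final balancing step: the $\delta$-independent loss of size $2^{O(3^{10N})}$ incurred by estimating $\mathcal{D}_3$ at the residual scale $\delta/32\delta_{n_*}$ trivially has to be absorbed into $2^{n_0 3^{a/\varepsilon}}$ at the same time as $C_\varepsilon^{1-1/3^N}$ is converted into $C_\varepsilon^{1-a/\varepsilon}$, and since \eqref{eq:ChoiceNMomentCurve} pins $N$ to $\asymp\varepsilon^{-1}$ while $C_\varepsilon$ may itself be doubly exponential in $\varepsilon^{-1}$, this is exactly why only the existence of a sufficiently large $a>0$ is asserted.
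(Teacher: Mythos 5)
Your scale-propagation argument (round to a dyadic scale, locate the nearest admissible scale $\delta_{n_*}$, apply submultiplicativity \eqref{eq:SubMultiplicativity} with $\sigma=\delta_{n_*}$, use Lemma \ref{lem:ChoiceNMomentCurve} on the first factor and the trivial bound \eqref{eq:TrivialDecoupling} on the second) is essentially the paper's proof and is correct: it yields
\begin{equation*}
\mathcal{D}_3(\delta)\leq C' \, 2^{n_0\cdot 3^{10N}}\, C_\varepsilon^{1-1/3^N}\,\delta^{-\varepsilon}
\end{equation*}
for all $\delta\in\N^{-1}$, with $N\asymp 1/\varepsilon$ fixed by \eqref{eq:ChoiceNMomentCurve}.

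The genuine gap is in your final ``balancing step,'' and it stems from taking the exponent $1-a/\varepsilon$ in \eqref{eq:IntermediateResultIIMomentCurve} at face value. That exponent is a misprint: the paper's own proof ends with $C_\varepsilon^{1-1/3^{10a/\varepsilon}}$, and the subsequent bootstrap (Lemma \ref{lem:IntermediateResultMomentCurveIII}) manifestly uses the implication $P(C_\varepsilon,\varepsilon)\Rightarrow P(C_8 2^{n_0 3^{10a/\varepsilon}}C_\varepsilon^{1-1/3^{a/\varepsilon}},\varepsilon)$, whose geometric-series structure requires the exponent of $C_\varepsilon$ to be $1-\theta$ with $\theta=1/3^{a/\varepsilon}\in(0,1)$. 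With the correct exponent the conversion is immediate: $3^N\leq 3^{a/\varepsilon}$ gives $1-1/3^N\leq 1-1/3^{a/\varepsilon}$ and hence $C_\varepsilon^{1-1/3^N}\leq C_\varepsilon^{1-1/3^{a/\varepsilon}}$ for $C_\varepsilon\geq 1$, with no information about the size of $C_\varepsilon$ needed. Your route instead has to absorb a leftover factor $C_\varepsilon^{a/\varepsilon}$ into $2^{n_0 3^{a/\varepsilon}}$, for which you invoke an a priori bound $\log_2 C_\varepsilon\leq B^{1/\varepsilon}$ ``provided by the Bourgain--Demeter--Guth argument.'' No such explicit bound is available in the paper (or easily extractable from \cite{BourgainDemeterGuth2016}); producing a double-exponential bound on the decoupling constant is precisely the point of the bootstrap that this lemma feeds into, so assuming it here is circular. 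It also breaks the structure of the bootstrap: the lemma must hold for an \emph{arbitrary} admissible $C_\varepsilon$ with $a$ chosen independently of $C_\varepsilon$, whereas your $a$ depends on $B$, i.e.\ on $C_\varepsilon$. Note finally that since $a/\varepsilon>1$ for small $\varepsilon$, the literal statement with $C_\varepsilon^{1-a/\varepsilon}\leq 1$ would already assert a $C_\varepsilon$-free bound $\mathcal{D}_3(\delta)\leq C_8 2^{n_0 3^{a/\varepsilon}}\delta^{-\varepsilon}$ --- which is the \emph{conclusion} of the bootstrap and cannot be obtained in a single pass.
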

\begin{proof}
Let $N$ be like in \eqref{eq:ChoiceNMomentCurve} and $\delta \in (\delta_n)_{n=n_0}^\infty = (2^{-n \cdot 3^{10N}})_{n=n_0}^{\infty}$. If $\delta \in (\delta_{n_0},1] \in \N^{-1}$, we use the trivial estimate
\begin{equation*}
\mathcal{D}_3(\delta) \leq \delta^{-1/2} \leq 2^{\frac{n_0}{2} \cdot 3^{10N}}.
\end{equation*}
If $\delta \in (\delta_{n+1},\delta_n]$ for $n \geq n_0$, then submultiplicativity and Lemma \ref{lem:ChoiceNMomentCurve} imply
\begin{equation*}
\begin{split}
\mathcal{D}_3(\delta) &\leq \mathcal{D}_3(\delta_{n+1}) \leq \mathcal{D}_3(\delta_n) \mathcal{D}_3(\delta_{n+1}/ 32 \delta_n) \leq (C_7 C_\varepsilon^{1-1/3^N} \delta_n^{-\varepsilon}) (32 (\delta_n / \delta_{n+1}))^{1/2} \\
&= 32^{1/2} C_7 C_0^{1/2} 2^{\frac{1}{2} \cdot 3^{10 N}} C_\varepsilon^{1- 1/3^N} \delta^{-\varepsilon}.
\end{split}
\end{equation*}
Taking the two estimates together gives
\begin{equation*}
D(\delta) \leq C_{8} 2^{n_0 \cdot 3^{10 N}} C_\varepsilon^{1-1/3^N} \delta^{-\varepsilon}.
\end{equation*}
This estimate holds for all $\delta \in \N^{-1}$ with $N$ given by \eqref{eq:ChoiceNMomentCurve}. Now we simplify by monotonicity in $N$. By the choice of $N$, we have $3^N \leq 3^{a/\varepsilon}$ for some $a$ and $\varepsilon < \varepsilon_0(c)$. We obtain
\begin{equation*}
D(\delta) \leq C_{8} 2^{n_0 \cdot 3^{10 a/\varepsilon}} C_\varepsilon^{1-\frac{1}{3^{a/\varepsilon}}} \delta^{-\varepsilon}.
\end{equation*}
\end{proof}

We bootstrap this bound to find the following:
\begin{lemma}
\label{lem:IntermediateResultMomentCurveIII}
There is $\varepsilon_0 = \varepsilon_0(C_{8},c)$ such that for all $0<\varepsilon<\varepsilon_0(c)$ and $\delta \in \N^{-1}$, we have
\begin{equation*}
\mathcal{D}_3(\delta) \leq 2^{3^{100a/\varepsilon}} \delta^{-\varepsilon}.
\end{equation*}
\end{lemma}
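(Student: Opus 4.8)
The plan is to feed the estimate of Lemma~\ref{lem:IntermediateResultIIMomentCurve} back into itself, i.e.\ to run a bootstrap on the critical quantity
\[
B_\varepsilon := \sup_{\delta \in \N^{-1}} \mathcal{D}_3(\delta)\,\delta^{\varepsilon}.
\]
The only thing needed to start the bootstrap is that $B_\varepsilon < \infty$, and this is guaranteed by the decoupling theorem of Bourgain--Demeter--Guth: \eqref{eq:DecouplingBDG} gives $B_\varepsilon \le C_\varepsilon < \infty$. No quantitative control on $C_\varepsilon$ will be used.

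First I would observe that every step from Lemma~\ref{lem:IterationI} through Lemma~\ref{lem:IntermediateResultIIMomentCurve} invoked \eqref{eq:DecouplingBDG} purely as the black-box bound $\mathcal{D}_3(\sigma) \le C_\varepsilon \sigma^{-\varepsilon}$, $\sigma \in \N^{-1}$ (applied to the intermediate scales $\delta/\nu^{3^j}$), the other ingredients being submultiplicativity \eqref{eq:SubMultiplicativity} and the trivial bound \eqref{eq:TrivialDecoupling}, which do not see $C_\varepsilon$; moreover the auxiliary constants $C_7$, $C_8$, $n_0$ and the admissible choice of $N$ in \eqref{eq:ChoiceNMomentCurve} depend only on $c$ and on absolute quantities. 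Hence the whole chain may be rerun verbatim with $C_\varepsilon$ replaced by $B_\varepsilon$, yielding for $0 < \varepsilon < \varepsilon_0(c)$ and all $\delta \in \N^{-1}$
\[
\mathcal{D}_3(\delta) \le C_8\, 2^{\,n_0 \cdot 3^{a/\varepsilon}}\, B_\varepsilon^{\,1 - 3^{-a/\varepsilon}}\, \delta^{-\varepsilon}.
\]
Multiplying by $\delta^{\varepsilon}$ and taking the supremum over $\delta \in \N^{-1}$ gives $B_\varepsilon \le C_8\, 2^{n_0 \cdot 3^{a/\varepsilon}}\, B_\varepsilon^{1 - 3^{-a/\varepsilon}}$; since $1 \le B_\varepsilon < \infty$, the power of $B_\varepsilon$ may be absorbed to the left, so that $B_\varepsilon^{\,3^{-a/\varepsilon}} \le C_8\, 2^{n_0 \cdot 3^{a/\varepsilon}}$, i.e.
\[
B_\varepsilon \le \bigl( C_8\, 2^{\,n_0 \cdot 3^{a/\varepsilon}} \bigr)^{3^{a/\varepsilon}} = C_8^{\,3^{a/\varepsilon}}\, 2^{\,n_0 \cdot 3^{2a/\varepsilon}}.
\]

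It then remains to verify that the right-hand side is at most $2^{\,3^{100a/\varepsilon}}$ once $\varepsilon$ is small. Taking $\log_2$, this reduces to $3^{a/\varepsilon}\log_2 C_8 + n_0\, 3^{2a/\varepsilon} \le 3^{100a/\varepsilon}$, which holds as soon as $3^{98a/\varepsilon} \ge \log_2 C_8 + n_0$; since $a$ is absolute and $n_0 = n_0(c)$, this is true for all $0 < \varepsilon < \varepsilon_0$ with $\varepsilon_0 = \varepsilon_0(C_8, c)$ chosen accordingly. The definition of $B_\varepsilon$ then gives $\mathcal{D}_3(\delta) \le 2^{\,3^{100a/\varepsilon}}\delta^{-\varepsilon}$ for all $\delta \in \N^{-1}$, as claimed.

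The step I expect to need the most care is not computational but conceptual: one has to be sure that plugging $B_\varepsilon$ back into the iteration is legitimate, i.e.\ that $B_\varepsilon$ is a priori finite (this is exactly where Bourgain--Demeter--Guth enters and cannot be dispensed with) and that none of the constants produced along the way — in particular $C_8$, $n_0$, $N$ — secretly carry a hidden dependence on $C_\varepsilon$; a quick audit of Sections~\ref{section:DecouplingLowDimensions}--\ref{section:IterationMomentCurve} confirms they do not. Everything else is the elementary algebra of solving $x \le A x^{1-\eta}$ with $\eta = 3^{-a/\varepsilon} \in (0,1)$ and then comparing towers of exponentials, where the generous constant $100$ in the target exponent leaves ample room.
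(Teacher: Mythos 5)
Your proof is correct and follows essentially the same bootstrap as the paper: where the paper iterates the implication $P(C,\varepsilon)\Rightarrow P\bigl(C_8\,2^{n_0\cdot 3^{10a/\varepsilon}}C^{1-3^{-a/\varepsilon}},\varepsilon\bigr)$ $M$ times and lets $M\to\infty$ to reach the fixed point, you pass to the optimal constant $B_\varepsilon$ and absorb $B_\varepsilon^{1-3^{-a/\varepsilon}}$ in one step, which yields the same value $K^{3^{a/\varepsilon}}$. Your explicit checks --- that finiteness of $B_\varepsilon$ is supplied by Bourgain--Demeter--Guth and that $C_7$, $C_8$, $n_0$, $N$ carry no hidden dependence on $C_\varepsilon$ --- are precisely the justifications the paper's iteration also relies on implicitly.
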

\begin{proof}
Let $P(C,\lambda)$ be the statement that $D(\delta) \leq C \delta^{-\lambda}$ for all $\delta \in \N^{-1}$. Lemma \ref{lem:IntermediateResultIIMomentCurve} implies that for $\varepsilon \in (0,\varepsilon_0(c))$ and $n_0 = n_0(c)$:
\begin{equation*}
P(C_\varepsilon,\varepsilon) \Rightarrow P(C_{8} \cdot 2^{n_0 \cdot 3^{10 a/\varepsilon}} C_\varepsilon^{1-1/3^{a/\varepsilon}}, \varepsilon).
\end{equation*}
After $M$ iterations of the above implication, we obtain
\begin{equation*}
P(C_\varepsilon,\varepsilon) \Rightarrow P((C_{8} \cdot 2^{n_0 \cdot 3^{10 a/\varepsilon}} )^{\sum_{j=0}^{M-1} (1-1/3^{a/\varepsilon})^j} C_\varepsilon^{(1-1/3^{a/\varepsilon})^M}, \varepsilon).
\end{equation*}
We can take limits
\begin{equation*}
C_\varepsilon^{(1-1/3^{a/\varepsilon})^M} \rightarrow_{M \to \infty} 1, \quad \sum_{j=0}^{M-1} (1-1/3^{a/\varepsilon})^j \rightarrow_{M \to \infty} 3^{a/\varepsilon}.
\end{equation*}
Hence, letting $M \to \infty$, we obtain
\begin{equation*}
P(C_{8}^{3^{a/\varepsilon}} \cdot 2^{n_0 \cdot 3^{11a/\varepsilon}}, \varepsilon).
\end{equation*}
By choosing $0<\varepsilon<\varepsilon_0(C_{8},n_0(c))$ we find for all $\delta \in \N^{-1}$
\begin{equation*}
D(\delta) \leq C_{8}^{3^{a/\varepsilon}} 2^{n_0 \cdot 3^{11 a/\varepsilon} } \delta^{-\varepsilon} \leq 2^{3^{100 a/\varepsilon}} \delta^{-\varepsilon}.
\end{equation*}
This finishes the proof.
\end{proof}
In the following we fix $\varepsilon_0 = \varepsilon_0(C_8,c)$ and $a$ such that Lemma \ref{lem:IntermediateResultMomentCurveIII} is valid.

\subsubsection{Proof of Theorem \ref{thm:ImprovedDecoupling}}
We can write for $0<\varepsilon<\varepsilon_0$
\begin{equation}
\label{eq:TripleExponentialBoundII}
D(\delta) \leq A^{A^{1/\varepsilon}} \delta^{-\varepsilon}
\end{equation}
for some $A=A(a) \geq e$. It suffices to prove \eqref{eq:DecouplingConstant3d} with exponentials and logarithms based on $A$.

\begin{proof}[Proof~of~Theorem~\ref{thm:ImprovedDecoupling}]
We optimize \eqref{eq:TripleExponentialBoundII} by choosing $\varepsilon=\varepsilon(\delta)$. 
Let 
\begin{equation}
\label{eq:ChoiceEps}
B = \log_A(1/\delta) > 1, \quad \eta = \log_A (B) - \log_A \log_A(B), \quad \varepsilon = 1/\eta.
\end{equation}
 This leads to the first constraint
\begin{equation}
\label{eq:ConstraintI}
\delta < A^{-1}.
\end{equation}
The constraint on $\varepsilon_0$ translates to
\begin{equation*}
\varepsilon = \frac{1}{\eta} \leq \varepsilon_0 \Rightarrow \frac{1}{\varepsilon_0} \leq \log_A (B / \log_A (B)) \leq \log_A( B) = \log_A (\log_A (1/\delta)).
\end{equation*}
This gives the condition on $\delta$:
\begin{equation}
\label{eq:CondDelta}
\delta < (A^{A^{1/\varepsilon_0}})^{-1} = \delta_0.
\end{equation}
It is straight-forward by \eqref{eq:ChoiceEps} that
\begin{equation*}
A^{1/\varepsilon} \leq \varepsilon \log_A(1/\delta).
\end{equation*}

For this reason we obtain
\begin{equation*}
A^{A^{1/\varepsilon}} \delta^{-\varepsilon} \leq \exp_A (2 \varepsilon \log_A(1/\delta)) \leq \exp_A( \frac{4 \log_A(1/\delta)}{\log_A \log_A (1/\delta)}).
\end{equation*}
In the above display we used that
\begin{equation*}
\varepsilon = \frac{1}{\log_A B - \log_A \log_A B} \leq \frac{2}{\log_A B},
\end{equation*}
which is true for $\log_A B \leq B^{1/2}$. This is true because $A \geq e$ and $B \geq 1$.
Finally, we find with $a = 1/\log(A) \leq 1$ 
\begin{equation*}
\begin{split}
\exp_A( \frac{4 \log_A(1/\delta)}{\log_A \log_A (1/\delta)}) &= \exp_A \big( \frac{4 \log(x)}{\log( a \log(1/\delta))} \big) \\
&\leq \exp_A \big( \frac{8 \log(x)}{\log( \log(1/\delta))} \big) = \exp \big( \frac{4 \log(A) \log(x)}{\log( a \log(1/\delta))} \big).
\end{split}
\end{equation*}
In the estimate we used $\log(a \log(1/\delta)) \geq \log ( \log( 1/\delta))/2$, which amounts to $\delta \leq \exp( - \log(A)^2)$. This is true by \eqref{eq:CondDelta}, and the proof is complete.

\end{proof}

\section*{Acknowledgements}

The analysis in Section \ref{section:LogarithmicImprovement} was previously made available as first version of this preprint and funding by the Deutsche Forschungsgemeinschaft (DFG, German Research Foundation) Project-ID 258734477 – SFB 1173 is gratefully acknowledged.
For the more recently carried out work from Sections \ref{section:Preliminaries}-\ref{section:ProofComplexCurves}, the financial support from the Humboldt foundation (Feodor-Lynen fellowship) and partial support by the NSF grant DMS-2054975 is gratefully acknowledged.

\bibliographystyle{plain}

\end{document}